\newtheorem{theorem}{Theorem}
\theoremstyle{plain}
\newtheorem{corollary}{Corollary}
\newtheorem{definition}{Definition}
\newtheorem{lemma}{Lemma}
\newtheorem{proposition}{Proposition}
\newtheorem{remark}{Remark}
\numberwithin{equation}{section}
\begin{document}
\title[Non-Central Limit Theorems]{Rosenblatt distribution
subordinated to Gaussian random fields
 with long-range dependence}
 \author{N.N. Leonenko}
\address[N.N. Leonenko]{Cardiff School of Mathematics, Senghennydd Road, Cardiff CF24 4AG, United Kingdom}
\email{LeonenkoN@cardiff.ac.uk}
 
\author{M.D. Ruiz-Medina}
\address[M.D. Ruiz-Medina]{University of Granada\\
Department of Statistics and Operations Research\\
Campus Fuente Nueva s/n, E-18071 Granada \\
Spain}
\email{mruiz@ugr.es}

\author{M.S. Taqqu}
\address[M.S. Taqqu]{Department of Mathematics and Statistics, 111 Cummington St., Boston
University, Boston, MA 02215, USA}
\email{murad@bu.edu}
\date{27 May 2016}
\thanks{This work has been supported in part by project MTM2015--71839--P
(co-funded by European Regional Development Funds), 
  MINECO,  Spain. This research was
also supported under Australian Research Council's Discovery
Projects funding scheme (project number DP160101366), and  under
Cardiff Incoming Visiting Fellowship Scheme and International
Collaboration Seedcorn Fund.
 Murad S. Taqqu was
supported in part by the NSF grants  DMS-1007616 and DMS-1309009 at
Boston University.}

\begin{abstract}
The Karhunen-Lo\`eve expansion and the Fredholm determinant formula
are used, to derive an asymptotic Rosenblatt-type distribution of a
sequence of
integrals of quadratic functions of Gaussian stationary random fields on $%
\mathbb{R}^{d}$ displaying long-range dependence. This distribution
reduces to the usual Rosenblatt distribution when $d=1.$ Several
properties of this new distribution are obtained. Specifically, its
series representation, in terms of independent chi-squared random
variables, is established. Its L\'evy-Khintchine representation, and
 membership to the Thorin subclass of self-decomposable
distributions are obtained as well. The existence and boundedness of
its probability density  then follow as a direct consequence.

\end{abstract}

\maketitle
\noindent \textit{Keywords.} Fredholm determinant; Hermite polynomials;  infinite
divisible distributions; multiple Wiener-It{\^o} stochastic
integrals; non-central limit theorems; Rosenblatt-type
distribution.

\medskip

\noindent \textit{AMS subject classifications.} \ 60F99;  60E10;   60G15;  60G60. 
\section{Introduction}

The aim of this paper is to derive and study the properties of the limit
distribution, as $T\longrightarrow \infty ,$ of the random integral

\begin{equation}
S_{T}=\frac{1}{d_{T}}\int_{D(T)}(Y^{2}(\mathbf{x})-1)d\mathbf{x},
\label{eq2}
\end{equation}%
\noindent where the normalizing function $d_{T}$ is given by
\begin{equation}
d_{T}=T^{d-\alpha }\mathcal{L}(T),\quad 0<\alpha <d/2,  \label{nf}
\end{equation}%
\noindent with $\mathcal{L}$ being a positive slowly varying function at
infinity, that is
\begin{equation}
\lim_{T\rightarrow \infty }\mathcal{L}(T\Vert \mathbf{x}%
\Vert )/\mathcal{L}(T)=1,\label{eq2b}
\end{equation} \noindent for every $\Vert \mathbf{x}\Vert >0,$ and $D(T)\subset \mathbb{R}^{d}$  denotes a homothetic transformation of a set $D\subset
\mathbb{R}^{d},$ with center at the point $\mathbf{0}\in D,$ and
coefficient or scale factor $T>0.$ In the subsequent development,
$D$ is assumed to be a regular bounded domain, whose interior has
positive Lebesgue measure, and
with boundary having null Lebesgue measure. Here, $\{Y(\mathbf{x}),\ \mathbf{%
x}\in \mathbb{R}^{d}\}$ is a zero-mean Gaussian homogeneous and isotropic
random field with values in $\mathbb{R},$ displaying long-range dependence.
That is, $Y$ is assumed to satisfy the following condition:

\medskip

\noindent \textbf{Condition A1}. The random field $\{Y(\mathbf{x}),\ \mathbf{%
x}\in \mathbb{R}^{d}\}$ is a measurable zero-mean Gaussian homogeneous and
isotropic mean-square continuous random field on a probability space $%
(\Omega ,\mathcal{A},P),$ with $\mathrm{E}Y^{2}(\mathbf{x})=1,$ for all $%
\mathbf{x}\in \mathbb{R}^{d},$ and correlation function $\mathrm{E}[Y(%
\mathbf{x})Y(\mathbf{y})]$  $=B(\Vert \mathbf{x}-\mathbf{y}\Vert )$ of the form:
\begin{equation}
B(\Vert \mathbf{z}\Vert )=\frac{\mathcal{L}(\Vert \mathbf{z}\Vert
)}{\Vert \mathbf{z}\Vert ^{\alpha }},\quad \mathbf{z}\in
\mathbb{R}^{d},\quad 0<\alpha < d/2.  \label{cov}
\end{equation}%
\noindent From \textbf{Condition A1}, the correlation  $B$ of $Y$ is
a continuous function of $r=\Vert \mathbf{z}\Vert.$ It then follows that $\mathcal{L}(r)=\mathcal{O}(r^{\alpha }),$ $%
r\longrightarrow 0.$ Note that the covariance function

\begin{equation}
B(\Vert \mathbf{z}\Vert )=\frac{1}{(1+\Vert \mathbf{z}\Vert ^{\beta
})^{\gamma }},\quad 0<\beta \leq 2,\quad \gamma >0,\label{eqcorrfunctex}
\end{equation}%
\noindent is a particular case of the family of covariance functions (\ref%
{cov}) studied here with $\alpha =  \beta\gamma,$ and
\begin{equation}\mathcal{L}(\Vert
\mathbf{z}\Vert )=\Vert \mathbf{z}\Vert ^{\beta \gamma }/(1+\Vert \mathbf{z}%
\Vert ^{\beta })^{\gamma }.\label{svfex}
\end{equation}

The limit random variable of (\ref{eq2}) will be denoted as $S_{\infty }.$
The distribution of $S_{\infty }$ will be referred to as the \textit{%
Rosenblatt-type} distribution, or sometimes simply as the
\textit{Rosenblatt} distribution because this is how it is known in
the case $d=1.$ In that case, a discretized version in time of the
integral (\ref{eq2}) first appears in \cite{Rosenblatt},
 and the limit functional version is considered in \cite{Taqqu75} in the form of the Rosenblatt process. In this classical
setting, the limit of (\ref{eq2}) is represented by a double
Wiener-It{\^o} stochastic integral (see \cite{Dobrushin};
\cite{Taqqu79}). Other relevant references include, for example, \cite{Albin},\cite{Anh}, \cite{Fox},
\cite{Ivanov}, \cite{LeonenkoTaufer06}, \cite{Rosenblatt79}, to mention just a few. The general approach considered here
for deriving the weak-convergence to the Rosenblatt distribution is
inspired by  \cite{Taqqu75}, which is based on the
convergence of characteristic functions. This approach has also been
used, recently, in   \cite{LeonenkoTaufer06}, to study
the characteristic functions of quadratic forms of
strongly-correlated Gaussian random variables sequences.

We suppose here $d\geq 2,$ and thus consider integrals of quadratic
functions of long-range dependence  zero-mean Gaussian stationary
random fields. We pursue, however, a different methodology than in
the case $d=1,$ which was based on the discretization of the
parameter space. A direct extension of these techniques is not
available when $d\geq 2$. Instead of discretizing the parameter
space of the random field, we focus on the characteristic function
for quadratic forms for Hilbert-valued Gaussian random variables
(see, for example, \cite{Da Prato}), and take advantage
of functional analytical tools, like the Karhunen-Lo\`eve expansion
and the Fredholm determinant formula, to obtain the convergence in
distribution to a limit random variable $S_{\infty }$ with
Rosenblatt-type  distribution.

The double Wiener-It{\^o} stochastic integral representation of
$S_{\infty }$  in the spectral domain leads to its series expansion
in terms of independent chi-squared random variables, weighted by
the eigenvalues of the integral operator introduced in equation
(\ref{RKD}) below. The asymptotics of these eigenvalues is given in
Corollary \ref{lfes}.
% from the application of Blumenthal and Getoor (1959) results in Theorem \ref{WaeK}.
The infinitely divisible property  of $S_{\infty }$ is then obtained
as a direct consequence of the previous results derived, in relation
to the series expansion of  $S_{\infty },$ and the asymptotic
properties of the eigenvalues. We also prove that the distribution
of $S_{\infty }$ is self-decomposable, and that it belongs, in
particular, to the Thorin
subclass. The existence and boundedness of the probability density of $%
S_{\infty }$ then  follows.

The outline of the paper is now described. In Section \ref{sec21},
we recall the Karhunen-Lo\`{e}ve expansion, introduce the Fredholm
determinant formula,
and use the referred tools to obtain the characteristic function of (\ref%
{eq2}). In Section \ref{23}, we prove the weak convergence of
(\ref{eq2}) to the random variable $S_{\infty }$ with a
Rosenblatt-type distribution. The double Wiener-It{\^{o}} stochastic
integral representation of $S_{\infty },$ its series expansion in
terms of independent chi-square random variables, and the
asymptotics of the involved eigenvalues are established in Section
\ref{proprd}. These results are applied in Section \ref{sec7} to
derive some properties of the Rosenblatt distribution, e.g.,
infinitely divisible property, self-decomposability, and, in
particular, the membership to the Thorin subclass. Appendices A-C
provide some auxiliary results and the proofs of some propositions
and corollaries.

In this paper we consider the case of real-valued random fields. In what
follows we use the symbols $C,C_{0},M_{1},M_{2},$ etc., to denote constants.
The same symbol may be
used for different constants appearing in the text.

\section{Karhunen-Lo\'eve expansion and related results}

\label{sec21} This section introduces some preliminary definitions,
assumptions and  lemmas
hereafter used  in the derivation of the main results of this paper. We start with the Karhunen-Lo{\`{e}}%
ve Theorem for a zero-mean second-order random field
$\{Y(\mathbf{x}),\ \mathbf{x}\in K\subset \mathbb{R}^{d}\},$ with
continuous covariance function $B_{0}(\mathbf{x},\mathbf{y})=\mathrm{E}[Y(%
\mathbf{x})Y(\mathbf{y})],\ (\mathbf{x,y)}\in K\times K\subset \mathbb{R}%
^{d}\times \mathbb{R}^{d},$ defined on a compact set $K$ of
$\mathbb{R}^{d}$ (see Section 3.2 in \cite{Adler}). This
theorem provides the following orthogonal expansion of the random
field $Y:$
\begin{eqnarray}
&&Y(\mathbf{x})=\sum_{j=1}^{\infty }\sqrt{\lambda _{j}}\phi _{j}(\mathbf{x}%
)\eta _{j},\quad \mathbf{x}\in K,  \notag  \label{eq3} \\
&&\lambda _{k}\phi _{k}(\mathbf{x})=\int_{K}B_{0}(\mathbf{x},\mathbf{y})\phi
_{k}(\mathbf{y})d\mathbf{y},\quad k\in \mathbb{N}_{\ast },\quad \left\langle
\phi _{i},\phi _{j}\right\rangle _{L^{2}(K)}=\delta _{i,j},\quad i,j\in
\mathbb{N}_{\ast },  \notag \\
&&
\end{eqnarray}%
\noindent where $\eta _{k}=\frac{1}{\sqrt{\lambda
_{k}}}\int_{K}Y(\mathbf{x})\phi _{k}(\mathbf{x})d\mathbf{x},$ for
each $k\geq 1,$ and the convergence holds in the $L^{2}(\Omega
,\mathcal{A},P)$ sense. The eigenvalues of $B_{0}$ are considered to
be arranged in decreasing order of magnitude, that is, $\lambda
_{1}\geq \lambda _{2}\geq \dots \geq \lambda _{k-1}\geq \lambda
_{k}\geq \dots .$ The orthonormality of the eigenfunctions $\phi
_{j},$ $j\in \mathbb{N}_{\ast },$ leads to the uncorrelation of the
random variables $\eta _{j},$ $j\in \mathbb{N}_{\ast },$ with
variance one, since
$$E[\eta _{j}\eta _{k}]=\int_{K}\int_{K}B_{0}(\mathbf{x},\mathbf{y})
\phi_{j}(\mathbf{y})\phi
_{k}(\mathbf{x})d\mathbf{y}d\mathbf{x}=\lambda_{j}\int_{K}\phi_{j}(\mathbf{x})\phi
_{k}(\mathbf{x})d\mathbf{x}=\lambda_{j}\delta_{j,k},$$ \noindent
with $\delta $ denoting the Kronecker delta  function. In the
Gaussian case, they are independent.

For each $T>0,$ let us fix some notation related to the
Karhunen-Lo{\`e}ve expansion of the restriction to the set $D(T)$ of
Gaussian random field $Y,$ with covariance function (\ref{cov}).  By
$R_{Y,D(T)}$ we denote the
covariance operator of $Y$ with covariance kernel $B_{0,T}(\mathbf{x},%
\mathbf{y})=\mathrm{E}[Y(\mathbf{x})Y(\mathbf{y})],$ $\mathbf{x},\mathbf{y}%
\in D(T),$ \noindent which, as an operator from $L^{2}(D(T))$ onto $%
L^{2}(D(T)),$ satisfies
\begin{equation*}
R_{Y,D(T)}(\phi _{l,T})(\mathbf{x})=\int_{D(T)}B_{0,T}(\mathbf{x},\mathbf{y}%
)\phi _{l,T}(\mathbf{y})d\mathbf{y}=\lambda _{l,T}(R_{Y,D(T)})\phi _{l,T}(%
\mathbf{x}),\quad l\in \mathbb{N}_{\ast },
\end{equation*}%
\noindent where, in the following, by $\lambda _{k}(A)$ we will
denote the $k$th eigenvalue of the operator $A.$ In particular,
$\{\lambda _{k,T}(R_{Y,D(T)})\}_{k=1}^{\infty }$ and $\{\phi
_{k,T}\}_{k=1}^{\infty }$ respectively
 denote  the eigenvalues and eigenfunctions of $R_{Y,D(T)},$ for
each $T>0.$ Note that, as commented, $B_{0,T}$ refers to the covariance function of $\{Y(%
\mathbf{x}),\ \mathbf{x}\in D(T)\}$ as a function of $(\mathbf{x},\mathbf{y}%
)\in D(T)\times D(T),$ which, under {\bfseries Condition A1}, defines a
non-negative, symmetric and continuous kernel on  $D(T),$
satisfying the conditions assumed in Mercer's Theorem. Hence, the Karhunen-Lo%
{\`e}ve expansion of random field $Y$ holds on $D(T)$, and its covariance
kernel $B_{0,T}$ also admits the series representation
\begin{equation}
B_{0,T}(\mathbf{x},\mathbf{y})=\sum_{j=1}^{\infty }\lambda
_{j,T}(R_{Y,D(T)})\phi _{j,T}(\mathbf{x})\phi _{j,T}(\mathbf{y}),\quad
\mathbf{x},\mathbf{y}\in D(T),  \label{eq4}
\end{equation}%
\noindent where the convergence is absolute and uniform (see, for
example, \cite{Adler}, pp.70-74). The orthonormality of
the eigenfunctions  $\{\phi _{l,T}\}_{l=1}^{\infty }$ yields

\begin{equation}
\frac{1}{d_{T}}\int_{D(T)}Y^{2}(\mathbf{x})d\mathbf{x}=\frac{1}{d_{T}}%
\sum_{j=1}^{\infty }\lambda _{j,T}(R_{Y,D(T)})\eta _{j,T}^{2}.
\label{Ysquare}
\end{equation}

In the derivation of the limit characteristic function of
(\ref{eq2}), we will use the Fredholm determinant formula of a trace
operator. Recall first that a positive operator $A$ on a separable
Hilbert space $H$ is a trace operator if
\begin{equation}
\|A\|_{1}\equiv \mbox{Tr}(A) \equiv \sum_{k}\left\langle
(A^{*}A)^{1/2}\varphi_{k},\varphi_{k}\right\rangle_{H}<\infty ,
\label{eqtr1}
\end{equation}
\noindent where $A^{*}$ denotes the adjoint of $A$ and $\{
\varphi_{k}\}$ is an orthonormal basis of the Hilbert space $H$ (see
\cite{Reed}, pp. 207-209). A sufficient condition for a
compact and self-adjoint operator $A$ to belong to the trace class
is $\sum_{k=1}^{\infty }\lambda_{k}(A)<\infty .$ For each finite
$T>0,$ the operator $R_{Y,D(T)}$ is in the trace class, since from
equation (\ref{eq4}), applying the orthonormality of the
eigenfunction system $\{\phi_{j,T},\ j\in \mathbb{N}_{*}\},$ and
keeping in mind that $B_{0,T}(\mathbf{0})=1,$ we have
\begin{equation}
Tr(R_{Y,D(T)})=\sum_{j=1}^{\infty }\lambda
_{j,T}(R_{Y,D(T)})=\int_{D(T)}B_{0,T}(\mathbf{x},\mathbf{x})d\mathbf{x}%
=\int_{D(T)}d\mathbf{x}=T^{d}|D|<\infty,  \label{TR4}
\end{equation}%
\noindent where $|D|$ denotes the Lebesgue measure of the compact
set $D.$ Note that the class of compact and self-adjoint operators
contains the class of trace and self-adjoint operators. Hence, under
{\bfseries Condition A1}, from equation (\ref{TR4}), the restriction
of $Y$ to $D(T)$ admits a Karhunen-Lo\'eve expansion, convergent in
the mean-square sense (i.e., in the
$L^{2}(\Omega,\mathcal{A},P)$-sense), for any $T>0,$ and for an
arbitrary regular  bounded domain $D.$ Furthermore, for any $k\geq
1,$
\begin{equation}
R^{k}_{Y,D(T)}f(\mathbf{x})=\int_{D(T)}B_{0,T}^{\ast (k)}(\mathbf{x},\mathbf{%
y})f(\mathbf{y})d\mathbf{y},\quad f\in L^{2}(D(T)),  \label{ire}
\end{equation}
\noindent where $B_{0,T}^{\ast (k)}$ denotes
\begin{equation*}
B_{0,T}^{\ast (1)}(\mathbf{x},\mathbf{y})=B_{0,T}(\mathbf{x},\mathbf{y}%
),\quad k=1,
\end{equation*}
\begin{equation}
B_{0,T}^{\ast (k)}(\mathbf{x},\mathbf{y})=\int_{D(T)}B_{0,T}^{\ast (k-1)}(%
\mathbf{x},\mathbf{z})B_{0,T}(\mathbf{z},\mathbf{y})d\mathbf{z},\quad
k=2,3,\dots.  \label{TR2}
\end{equation}
\noindent From equations (\ref{eq4}) and (\ref{TR2}), applying the orthonormality of $%
\phi_{j,T},$ $j\in \mathbb{N}_{*},$ one can obtain
\begin{equation}
Tr (R^{k}_{Y,D(T)})=\sum_{j=1}^{\infty }\lambda
_{j,T}^{k}(R_{Y,D(T)})=\int_{D(T)}B_{0,T}^{\ast (k)}(\mathbf{x},\mathbf{x})d%
\mathbf{x}<\infty , \quad k\in \mathbb{N}_{*},  \label{TR1}
\end{equation}

\noindent since, for every $k\geq 1,$ $|\lambda_{k}(R_{Y,D(T)})|\leq
M|\lambda_{k}(R_{Y,D(T)})|^{k}=M|\lambda_{k}(R^{k}_{Y,D(T)})|,$ for
some positive constant $M.$ In particular, in the homogeneous random
field case,
\begin{eqnarray}  \label{TR3}
Tr(R^{k}_{Y,D(T)})&=&\sum_{j=1}^{\infty }\lambda
_{j,T}^{k}(R_{Y,D(T)})=\int_{D(T)}B_{0,T}^{\ast (k)}(\mathbf{x}_{k},\mathbf{x}_{k})d%
\mathbf{x}_{k}  \notag \\
&=&\int_{D(T)}...\int_{D(T)}\left[ \prod_{j=1}^{k-1}B_{0,T}(\mathbf{x}_{j+1}-%
\mathbf{x}_{j})\right] B_{0,T}(\mathbf{x}_{1}-\mathbf{x}_{k})d\mathbf{x}%
_{1}\dots d\mathbf{x}_{k},  \notag \\
\end{eqnarray}
\noindent and, in the homogeneous and isotropic case, for $k=2,$
\begin{equation}
Tr(R^{2}_{Y,D(T)})=\sum_{j=1}^{\infty }\lambda _{j,T}^{2}(R_{Y,D(T)})=
\int_{D(T)}\int_{D(T)}\frac{\mathcal{L}^{2}(\|\mathbf{x}-\mathbf{y}\|)}{\|%
\mathbf{x}-\mathbf{y}\|^{2\alpha }}d\mathbf{y}d\mathbf{x}.  \label{TR5b}
\end{equation}

The following definition introduces the Fredholm determinant of an
operator $A,$ as a complex-valued function which generalizes the
determinant of a matrix.

\begin{definition}
\label{def1} (see, for example, \cite{Simon},  Chapter 5, pp.47-48,
equation (5.12)) Let $A$ be a trace operator on a separable Hilbert
space $H.$ The Fredholm determinant of $A$ is
\begin{equation}
\mathcal{D}(\omega )=\mbox{det}(I-\omega A)=\exp\left(-\sum_{k=1}^{\infty }%
\frac{\mbox{Tr}A^{k}}{k}\omega^{k}\right)=\exp\left(-\sum_{k=1}^{\infty
}\sum_{l=1}^{\infty}[\lambda_{l}(A)]^{k}\frac{\omega^{k}}{k}\right),
\label{fdf}
\end{equation}
\noindent for $\omega \in \mathbb{C},$ and $|\omega |\|A\|_{1}< 1.$ Note
that $\| A^{m}\|_{1}\leq \|A\|_{1}^{m},$ for $A$ being a trace operator.
\end{definition}

\begin{lemma}
\label{lemm0} \textit{Let $\{Y(\mathbf{x}),\ \mathbf{x}\in D\subset
\mathbb{R}^{d}\}$ be an  integrable and continuous, in the
mean-square sense, zero-mean, Gaussian random field, on a bounded
regular domain  $D\subseteq \mathbb{R}^{d}$ containing the point
zero. Then, the following identity holds:}
\begin{eqnarray}
\mathrm{E}\left[ \exp \left( \mathrm{i}\xi \int_{D}Y^{2}(\mathbf{x})d\mathbf{%
x}\right) \right] &=&\prod_{j=1}^{\infty }(1-2\lambda _{j}(R_{Y,D})\mathrm{i}%
\xi )^{-1/2}=(\mathcal{D}(2\mathrm{i}\xi ))^{-1/2}  \notag \\
&=&\exp \left( \frac{1}{2}\sum_{m=1}^{\infty }\frac{(2\mathrm{i}\xi )^{m}}{m}%
\mbox{Tr}(R_{Y,D}^{m})\right) ,  \label{lemm1}
\end{eqnarray}%
\noindent \textit{for $\Vert R_{Y,D}\Vert _{1}|2\mathrm{i}\xi |<1,$
as given in Definition \ref{def1}.}
\end{lemma}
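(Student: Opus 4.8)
\emph{Proof plan.} The plan is to combine the Karhunen-Lo\`{e}ve expansion of $Y$ on $D$ with the elementary characteristic function of a chi-squared variable, and then pass to the limit, finally identifying the resulting infinite product with the Fredholm determinant of Definition \ref{def1}. Under the stated hypotheses, $B_{0}(\mathbf{x},\mathbf{y})=\mathrm{E}[Y(\mathbf{x})Y(\mathbf{y})]$ is a continuous, symmetric, non-negative kernel on the compact closure of $D$, so Mercer's theorem applies and $R_{Y,D}$ is in the trace class with $\sum_{j\geq 1}\lambda _{j}(R_{Y,D})=\int_{D}B_{0}(\mathbf{x},\mathbf{x})d\mathbf{x}<\infty $, the analogue for $D$ of (\ref{TR4}). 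The Karhunen-Lo\`{e}ve theorem then gives $Y(\mathbf{x})=\sum_{j\geq 1}\sqrt{\lambda _{j}(R_{Y,D})}\,\phi _{j}(\mathbf{x})\,\eta _{j}$ with $\{\eta _{j}\}$ i.i.d.\ standard Gaussian, converging in $L^{2}(\Omega )$ for each $\mathbf{x}$ and, since $\int_{D}\mathrm{E}[(Y_{N}(\mathbf{x})-Y(\mathbf{x}))^{2}]d\mathbf{x}=\int_{D}B_{0}(\mathbf{x},\mathbf{x})d\mathbf{x}-\sum_{j\leq N}\lambda _{j}(R_{Y,D})\rightarrow 0$, also in $L^{2}(\Omega \times D)$. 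Hence $Y_{N}^{2}\rightarrow Y^{2}$ in $L^{1}(\Omega \times D)$, and using the orthonormality $\langle \phi _{i},\phi _{j}\rangle _{L^{2}(D)}=\delta _{ij}$ together with Tonelli's theorem (all summands non-negative) one obtains the a.s.\ and $L^{1}(\Omega )$ identity
\begin{equation*}
\int_{D}Y^{2}(\mathbf{x})d\mathbf{x}=\sum_{j=1}^{\infty }\lambda _{j}(R_{Y,D})\,\eta _{j}^{2},
\end{equation*}
the unnormalized version of (\ref{Ysquare}).

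First I would handle the truncated sums. For a standard Gaussian $\eta _{j}$ and any real $t$, $\mathrm{E}[\exp (\mathrm{i}t\,\eta _{j}^{2})]=(1-2\mathrm{i}t)^{-1/2}$, the square root being the principal branch, unambiguous because $\mathrm{Re}(1-2\mathrm{i}t)=1>0$. By independence of $\eta _{1},\dots ,\eta _{N}$,
\begin{equation*}
\mathrm{E}\left[ \exp \left( \mathrm{i}\xi \sum_{j=1}^{N}\lambda _{j}(R_{Y,D})\,\eta _{j}^{2}\right) \right] =\prod_{j=1}^{N}\bigl(1-2\mathrm{i}\xi \,\lambda _{j}(R_{Y,D})\bigr)^{-1/2}.
\end{equation*}
Then, letting $N\rightarrow \infty $: the left-hand side converges to $\mathrm{E}[\exp (\mathrm{i}\xi \int_{D}Y^{2})]$ since $\sum_{j=1}^{N}\lambda _{j}\eta _{j}^{2}\rightarrow \int_{D}Y^{2}$ a.s.\ and $x\mapsto e^{\mathrm{i}\xi x}$ is bounded (dominated convergence), while the right-hand side converges because $\sum_{j}\lambda _{j}(R_{Y,D})<\infty $ and $|\log (1-2\mathrm{i}\xi \lambda _{j})|\leq C_{\xi }\lambda _{j}$ for all $j$, so the infinite product is absolutely convergent. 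This yields the first equality in (\ref{lemm1}).

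Finally I would identify the product with the Fredholm determinant. Writing $A=R_{Y,D}$ and $\omega =2\mathrm{i}\xi $, Definition \ref{def1} gives $\mathcal{D}(\omega )=\det (I-\omega A)=\prod_{j\geq 1}(1-\omega \lambda _{j}(A))$, so $\prod_{j}(1-2\mathrm{i}\xi \lambda _{j})^{-1/2}=(\mathcal{D}(2\mathrm{i}\xi ))^{-1/2}$, the second equality. For the third, under the stated restriction $\Vert R_{Y,D}\Vert _{1}\,|2\mathrm{i}\xi |=2|\xi |\sum_{j}\lambda _{j}<1$ the series $\sum_{k\geq 1}\frac{\mathrm{Tr}(A^{k})}{k}\omega ^{k}$ converges absolutely (as $\mathrm{Tr}(A^{k})=\Vert A^{k}\Vert _{1}\leq \Vert A\Vert _{1}^{k}$), hence by (\ref{fdf}) $\mathcal{D}(2\mathrm{i}\xi )=\exp (-\sum_{m\geq 1}\frac{(2\mathrm{i}\xi )^{m}}{m}\mathrm{Tr}(R_{Y,D}^{m}))$; taking $(\cdot )^{-1/2}$, again with the principal branch consistent with the value $1$ at $\xi =0$, gives the last expression in (\ref{lemm1}).

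The main obstacle is the careful justification that $\int_{D}Y^{2}(\mathbf{x})d\mathbf{x}=\sum_{j}\lambda _{j}(R_{Y,D})\eta _{j}^{2}$ almost surely, that is, upgrading the pointwise-in-$\mathbf{x}$ $L^{2}(\Omega )$ convergence of the Karhunen-Lo\`{e}ve series to convergence in $L^{2}(\Omega \times D)$ so that one may square and integrate term by term, together with the bookkeeping of branches of the complex square root and logarithm, which is precisely what forces the restriction $|2\mathrm{i}\xi |\,\Vert R_{Y,D}\Vert _{1}<1$ in the last representation. Once these points are settled, the remaining steps are routine.
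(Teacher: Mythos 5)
Your proposal is correct and follows essentially the same route as the paper: Karhunen--Lo\`{e}ve expansion, independence of the $\eta_j$, the chi-squared characteristic function $(1-2\mathrm{i}t)^{-1/2}$, and identification of the resulting product with the Fredholm determinant of Definition~\ref{def1}. The only difference is that you justify the passage to the infinite product by truncation, dominated convergence and absolute convergence of $\sum_j\lambda_j(R_{Y,D})$, whereas the paper factorizes the expectation over the infinite product directly; your added care on the $L^2(\Omega\times D)$ convergence of the series for $\int_D Y^2$ is a welcome but inessential refinement.
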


\begin{proof}
 The covariance
operator $R_{Y,D}$ of $Y,$  acting on the space  $L^{2}(D),$ is in
the trace class. From Definition \ref{def1},  the following
identities
  hold:
\begin{eqnarray}
& & E\left[ \exp\left(\mathrm{i}\xi \int_{D}Y^{2}(\mathbf{x})d\mathbf{x}\right)\right]= E\left[ \exp\left(\mathrm{i}\xi
\sum_{j=1}^{\infty}\lambda_{j}(R_{Y,D})\eta_{j}^{2}\right) \right]
\nonumber\\
& &=\prod_{j=1}^{\infty}E\left[\exp\left(\mathrm{i}\xi
\lambda_{j}(R_{Y,D})\eta_{j}^{2}\right)\right]=\prod_{j=1}^{\infty}(1-2\lambda_{j}(R_{Y,D})\mathrm{i}\xi)^{-1/2}=(\mathcal{D}(2\mathrm{i}\xi ))^{-1/2}\nonumber\\
& & =\left[\exp\left( -\sum_{m=1}^{\infty }\frac{(2\mathrm{i}\xi)^{m}}{m}\mbox{Tr}%
(R_{Y,D}^{m})\right)\right]^{-1/2}=\exp\left( \frac{1}{2}\sum_{m=1}^{\infty }\frac{(2\mathrm{i}\xi )^{m}}{m}\mbox{Tr}%
(R_{Y,D}^{m})\right),\nonumber\\ \label{Fredet}
\end{eqnarray}
\noindent   where the last two identities in equation (\ref{Fredet})
are finite for $|\xi |<\frac{1}{2|D|},$  from the Fredholm
determinant formula (\ref{fdf}). Note that
\begin{eqnarray}
& &\mbox{Tr}(R_{Y,\mathcal{D}}^{m})=\sum_{j=1}^{\infty
}\lambda_{j}^{m}(R_{Y,D})\leq
\lambda_{1}^{m-1}(R_{Y,D})\sum_{j=1}^{\infty }\lambda_{j}(R_{Y,D})
=\lambda_{1}^{m-1}(R_{Y,D})\|R_{Y,\mathcal{D}}\|_{1} <\infty .
\end{eqnarray}
  \end{proof}

\begin{remark}
\label{rem2} \textrm{{Similarly to equation (\ref{lemm1}), one can
obtain the following identities, which will be used in the
subsequent development: For a homothetic transformation $D\left(
T\right) $ of  $D \subset \mathbb{R}^{d},$ with center at the point
$\mathbf{0} \in D,$ and coefficient $T>0,$
\begin{eqnarray}
E\left[ \exp\left(\mathrm{i}\xi \int_{D(T)}Y^{2}(\mathbf{x})d\mathbf{x}\right)\right]
&=&\prod_{j=1}^{\infty}(1-2\lambda_{j,T}(R_{Y,D(T)})\mathrm{i}\xi)^{-1/2}=(\mathcal{D}%
_{T}(2\mathrm{i}\xi))^{-1/2}  \notag \\
&=& \exp\left( \frac{1}{2}\sum_{m=1}^{\infty }\frac{(2\mathrm{i}\xi)^{m}}{m}\mbox{Tr}%
(R_{Y,D(T)}^{m})\right),  \label{expands}
\end{eqnarray}
\noindent where  $\lambda _{1,T}(R_{Y,D(T)})\geq
\lambda_{2,T}(R_{Y,D(T)})\geq \dots \geq \lambda
_{j,T}(R_{Y,D(T)})\geq \dots ,$ with, as before,
$\{\lambda_{j,T}(R_{Y,D(T)}),\ j\in \mathbb{N}_{*}\}$ denoting the
system of eigenvalues of the covariance operator $R_{Y,D(T)}$ of
$Y,$ as an operator from $L^{2}(D(T))$ onto $L^{2}(D(T)).$ \noindent
The last identity in equation (\ref{expands})
holds for $\|R_{Y,D(T)}\|_{1}|2\mathrm{i}\xi |<1,$ i.e., for $\mbox{Tr}%
(R_{Y,D(T)})|2\mathrm{i}\xi |= T^{d}|D ||2\mathrm{i}\xi |<1,$ or
equivalently for $|\xi |<\frac{1}{2T^{d}|D|}.$ } }
\end{remark}

\section{Weak convergence of the random integral $S_T$}

\label{23}

This section   provides
 the weak convergence of the random integral (\ref{eq2}) to a
Rosenblatt-type distribution, in Theorem \ref{pr1}.  This results is
based on the asymptotic behavior of the eigenvalues of the integral
operator $\mathcal{K}_{\alpha }$ (see Theorem \ref{WaeK} below)
\begin{equation}
\mathcal{K}_{\alpha }(f)(\mathbf{x})=\int_{D }\frac{1}{\|\mathbf{x}-\mathbf{y%
}\|^{\alpha }}f(\mathbf{y})d\mathbf{y},\quad \forall f\in \mbox{Supp}(%
\mathcal{K}_{\alpha}),  \quad 0<\alpha <d, \label{RKD}
\end{equation}
\noindent with $\mbox{Supp}(A)$ denoting the support of operator
$A.$ Operator (\ref{RKD}) can be related with   the  Riesz potential
$(-\Delta )^{-\beta /2}$ of order $\beta ,$ $0<\beta< d,$ on
$\mathbb{R}^{d},$ formally  defined  as (see \cite{Stein}, p.117)
\begin{equation}
(-\Delta )^{-\beta /2}(f)(\mathbf{x})=\frac{1}{\gamma (\beta
)}\int_{\mathbb{R}^{d}}\|\mathbf{x}-\mathbf{y}\|^{-d+\beta}f(\mathbf{y})d\mathbf{y},
\label{rps117}
\end{equation}
\noindent where  $(-\Delta )$ denotes the negative Laplacian
operator, and \begin{equation}\gamma (\beta
)=\frac{\pi^{d/2}2^{\beta }\Gamma (\beta/2)}{\Gamma
\left(\frac{d-\beta}{2}\right)}=\frac{1}{c(d,\beta )},\quad 0<\beta
<d.\label{eRc}
\end{equation}

\noindent Indeed, except  a constant, the function
$\left(1/\|\mathbf{x}-\mathbf{y}\|^{\alpha }\right)$ in equation
(\ref{RKD}) defines the kernel of the Riesz potential $(-\Delta
)^{(\alpha -d)/2}$ of order $\beta =(d-\alpha ),$ for $0<\alpha <d.$
   Similarly,
$\left(1/\|\mathbf{x}-\mathbf{y}\|^{2\alpha }\right)$ is the kernel
of the Riesz potential $(-\Delta )^{\alpha -d/2}$ of order $\beta
=(d-2\alpha )$ on $\mathbb{R}^{d},$ for $0<\alpha < d/2.$

Recall that the Schwartz space $\mathcal{S}(\mathbb{R}^{d})$ is the
space of of  infinitely differentiable functions on
$\mathbb{R}^{d},$  whose derivatives remain bounded when multiplied
by polynomials, i.e., whose derivatives are rapidly decreasing.
Particularly,  $C^{\infty}_{0}(D)\subset
\mathcal{S}(\mathbb{R}^{d}),$ with $C^{\infty}_{0}(D)$ denoting the
infinitely differentiable functions with compact support contained
in $D.$

The Fourier transform of the Riesz potential is understood in the
weak sense, considering the space  $\mathcal{S}(\mathbb{R}^{d}).$
The  following lemma provides such a transform (see Lemma 1 of
\cite{Stein},  p.117):
\begin{lemma}
\label{l1s117} \textit{Let us consider $0<\beta <d.$
\begin{itemize}
\item[(i)] The Fourier transform of the function
$\|\mathbf{z}\|^{-d+\beta}$ is $\gamma (\beta )\|\mathbf{z}\|^{-\beta },$ in the sense that
\begin{equation}
\int_{\mathbb{R}^{d}}\|\mathbf{z}\|^{-d+\beta }\overline{\psi(\mathbf{z})}d\mathbf{z}= \int_{\mathbb{R}^{d}}\gamma
(\beta )\|\mathbf{z}\|^{-\beta }\overline{\mathcal{F}(\psi )(\mathbf{z})}d\mathbf{z},\quad \forall \psi \in
\mathcal{S}(\mathbb{R}^{d}), \label{fist117}
\end{equation}
\noindent where $$\mathcal{F}(\psi
)(\mathbf{z})=\int_{\mathbb{R}^{d}}\exp\left(-\mathrm{i}\left\langle
\mathbf{x},\mathbf{z}\right\rangle \right)\psi
(\mathbf{x})d\mathbf{x}$$ \noindent denotes the Fourier transform of
$\psi.$
\item[(ii)] The identity $\mathcal{F}\left((-\Delta
)^{-\beta /2}(f)\right)(\mathbf{z})=\|\mathbf{z}\|^{-\beta }\mathcal{F}(f)(\mathbf{z})$ holds in the sense that
\begin{equation}
\int_{\mathbb{R}^{d}} (-\Delta )^{-\beta
/2}(f)(\mathbf{x})\overline{g(\mathbf{x})}d\mathbf{x}=\frac{1}{(2\pi)^{d}}\int_{\mathbb{R}^{d}}\mathcal{F}(f)(\mathbf{x})\|\mathbf{x}\|^{-\beta
}\overline{\mathcal{F}(g)(\mathbf{x})}d\mathbf{x},\quad \forall f,g\in \mathcal{S}(\mathbb{R}^{d}). \label{irpp}
\end{equation}
\end{itemize}}
\end{lemma}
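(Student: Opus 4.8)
The plan is to prove Lemma~\ref{l1s117} following the classical approach in Stein's book, using the Gaussian (heat kernel) as an auxiliary device to regularize the otherwise non-integrable expression $\|\mathbf{z}\|^{-d+\beta}$. The key identity I would start from is the gamma-function representation
\begin{equation*}
\|\mathbf{z}\|^{-d+\beta} = \frac{1}{\Gamma\!\left(\frac{d-\beta}{2}\right)}\int_0^{\infty} t^{\frac{d-\beta}{2}-1} e^{-t\|\mathbf{z}\|^2}\,dt,
\end{equation*}
valid for $0<\beta<d$ so that the exponent $\frac{d-\beta}{2}-1 > -1$ makes the integral converge at $t=0$ and the factor $e^{-t\|\mathbf{z}\|^2}$ handles $t\to\infty$. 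First I would multiply both sides by $\overline{\psi(\mathbf{z})}$ for $\psi\in\mathcal{S}(\mathbb{R}^d)$, integrate over $\mathbb{R}^d$, and justify interchanging the $t$-integral and $\mathbf{z}$-integral by Fubini (the integrand is absolutely integrable since $\psi$ is Schwartz and the $t$-integral of $t^{(d-\beta)/2-1}e^{-t\|z\|^2}$ against a rapidly decaying function is finite). Then I would use the fact that the Fourier transform of the Gaussian $e^{-t\|\mathbf{z}\|^2}$ is again a Gaussian, namely $(\pi/t)^{d/2} e^{-\|\mathbf{x}\|^2/(4t)}$, together with the multiplication formula (Parseval-type identity) $\int \widehat{f}\,\bar g = \int f\,\overline{\widehat{g}}$ — more precisely $\int f\,\bar g\,dz$ turns into an integral involving $\widehat\psi$ — to rewrite $\int_{\mathbb{R}^d} e^{-t\|\mathbf{z}\|^2}\overline{\psi(\mathbf{z})}\,d\mathbf{z}$ in terms of $\mathcal{F}(\psi)$.

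After swapping, I expect to arrive at an expression of the form (up to bookkeeping of constants and the $(2\pi)^d$ normalization implicit in the chosen convention for $\mathcal{F}$)
\begin{equation*}
\frac{1}{\Gamma\!\left(\frac{d-\beta}{2}\right)}\int_{\mathbb{R}^d}\overline{\mathcal{F}(\psi)(\mathbf{x})}\left(\int_0^\infty t^{\frac{d-\beta}{2}-1}\left(\frac{\pi}{t}\right)^{d/2} e^{-\|\mathbf{x}\|^2/(4t)}\,dt\right)d\mathbf{x},
\end{equation*}
and the inner $t$-integral should, after the substitution $s = \|\mathbf{x}\|^2/(4t)$, evaluate again via the gamma function to a constant multiple of $\|\mathbf{x}\|^{-\beta}$. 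Collecting the constants $\pi^{d/2}$, $2^\beta$, $\Gamma(\beta/2)$, and $\Gamma((d-\beta)/2)$ should reproduce exactly $\gamma(\beta)$ as defined in (\ref{eRc}). This establishes part~(i). For part~(ii), I would simply combine part~(i) with the definition (\ref{rps117}) of $(-\Delta)^{-\beta/2}$ as convolution with $\gamma(\beta)^{-1}\|\cdot\|^{-d+\beta}$, apply the multiplication formula for the Fourier transform of a convolution ($\mathcal{F}(f*k) = \mathcal{F}(f)\mathcal{F}(k)$), and use part~(i) to identify $\mathcal{F}(\gamma(\beta)^{-1}\|\cdot\|^{-d+\beta}) = \|\cdot\|^{-\beta}$; the factor $1/(2\pi)^d$ in (\ref{irpp}) comes from the Plancherel normalization associated with the unnormalized Fourier transform used here.

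The main obstacle I anticipate is not conceptual but technical: carefully justifying every interchange of integrals (the $t$-integral with the $\mathbf{z}$-integral, and then again with the $\mathbf{x}$-integral on the Fourier side), since the kernel $\|\mathbf{z}\|^{-d+\beta}$ is only locally integrable and has no decay, so one cannot naively invoke Fubini without exploiting the Schwartz decay of $\psi$ and splitting the $t$-range into $(0,1)$ and $(1,\infty)$. A secondary nuisance is keeping the Fourier-transform normalization constants consistent throughout — the paper uses the unnormalized convention $\mathcal{F}(\psi)(\mathbf{z}) = \int e^{-\mathrm{i}\langle\mathbf{x},\mathbf{z}\rangle}\psi(\mathbf{x})\,d\mathbf{x}$, so every application of Parseval or the convolution theorem drags along powers of $2\pi$ that must cancel correctly to land on the stated formulas. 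Since this is a verbatim citation of Lemma~1 on p.~117 of Stein, I would in practice give the heat-kernel computation in enough detail to be self-contained and then remark that the constant identification is exactly (\ref{eRc}), referring the reader to \cite{Stein} for the full details of the Fubini justifications.
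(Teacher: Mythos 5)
The paper does not prove this lemma at all --- it is quoted as Lemma~1 on p.~117 of \cite{Stein} --- so there is no internal proof to compare against; your heat-kernel subordination argument is exactly the classical proof of the cited result, and its structure (gamma-integral representation of $\|\mathbf{z}\|^{-d+\beta}$, Fubini after splitting the $t$-range, Gaussian self-duality, the substitution $s=\|\mathbf{x}\|^{2}/(4t)$, then the convolution theorem for part~(ii)) is correct. The one step you defer --- ``collecting the constants should reproduce exactly $\gamma(\beta)$'' --- is precisely where you will hit trouble, and it is worth being explicit about why. With the paper's unnormalized convention $\mathcal{F}(\psi)(\mathbf{z})=\int e^{-\mathrm{i}\langle\mathbf{x},\mathbf{z}\rangle}\psi(\mathbf{x})\,d\mathbf{x}$, the Parseval step $\int f\overline{\psi}=(2\pi)^{-d}\int\mathcal{F}(f)\overline{\mathcal{F}(\psi)}$ carries a factor $(2\pi)^{-d}$, and your computation then yields
\begin{equation*}
\int_{\mathbb{R}^{d}}\|\mathbf{z}\|^{-d+\beta}\overline{\psi(\mathbf{z})}\,d\mathbf{z}
=\frac{\gamma(\beta)}{(2\pi)^{d}}\int_{\mathbb{R}^{d}}\|\mathbf{z}\|^{-\beta}\overline{\mathcal{F}(\psi)(\mathbf{z})}\,d\mathbf{z},
\end{equation*}
not (\ref{fist117}) as printed; equivalently, the right-hand side of (\ref{fist117}) equals $(2\pi)^{d}$ times its left-hand side, as one can check independently from the identity $\gamma(\beta)\gamma(d-\beta)=(2\pi)^{d}$ that the paper itself uses later. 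Stein's version of the statement has no such factor because his transform uses $e^{-2\pi\mathrm{i}\langle\mathbf{x},\mathbf{z}\rangle}$, and the paper's transcription of part~(i) drops the $(2\pi)^{-d}$ that part~(ii), by contrast, correctly retains. So your plan proves the correct theorem, but if you execute it literally you will not land on (\ref{fist117}); you should either state (i) in the non-conjugated multiplication form $\int \gamma(\beta)\|\mathbf{z}\|^{-\beta}\psi(\mathbf{z})\,d\mathbf{z}=\int\|\mathbf{z}\|^{-d+\beta}\mathcal{F}(\psi)(\mathbf{z})\,d\mathbf{z}$, which holds without extra constants in this convention, or insert the $(2\pi)^{-d}$ explicitly. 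Everything else --- the Fubini justifications and part~(ii) via the convolution theorem --- is routine and goes through as you describe.
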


In particular, the following convolution formula is obtained by
iteration of (\ref{irpp}) using (\ref{rps117}):
\begin{eqnarray}
& & \int_{\mathbb{R}^{d}}\left(\frac{1}{\gamma (\beta
)}\int_{\mathbb{R}^{d}}\|\mathbf{x}-\mathbf{y}\|^{-d+\beta}
\left[\frac{1}{\gamma (\beta
)}\int_{\mathbb{R}^{d}}\|\mathbf{y}-\mathbf{z}\|^{-d+\beta}f(\mathbf{z})d\mathbf{z}\right]d\mathbf{y}\right)\overline{g(\mathbf{x})}
d\mathbf{x} \nonumber\\
&&= \int_{\mathbb{R}^{d}}(-\Delta )^{-\beta /2}\left[(-\Delta
)^{-\beta /2}(f)\right](\mathbf{x})\
\overline{g(\mathbf{x})}d\mathbf{x} \nonumber\\ &
=&\frac{1}{(2\pi)^{d}}\int_{\mathbb{R}^{d}}\left[\mathcal{F}((-\Delta
)^{-\beta /2}(f))(\mathbf{x})\right]\|\mathbf{x}\|^{-\beta
}\overline{\mathcal{F}(g)(\mathbf{x})}d\mathbf{x} \nonumber
\end{eqnarray}
\begin{eqnarray}
  &&=\frac{1}{(2\pi)^{d}}
\int_{\mathbb{R}^{d}}\mathcal{F}(f)(\mathbf{x})\|\mathbf{x}\|^{-\beta }\|\mathbf{x}\|^{-\beta }
\overline{\mathcal{F}(g)(\mathbf{x})}d\mathbf{x} \nonumber\\
&&=\frac{1}{(2\pi)^{d}}\int_{\mathbb{R}^{d}}\mathcal{F}(f)(\mathbf{x})\|\mathbf{x}\|^{-2\beta
}\overline{\mathcal{F}(g)(\mathbf{x})}d\mathbf{x}\nonumber\\
&&= \int_{\mathbb{R}^{d}}(-\Delta )^{-\beta
}(f)(\mathbf{x})\overline{g(\mathbf{x})}d\mathbf{x},\quad \forall
f,g\in \mathcal{S}(\mathbb{R}^{d}),\quad
0<\beta<d/2,\nonumber\\
\label{cfsf}
\end{eqnarray}
\noindent  where we have used that if $f\in \mathcal{S}(\mathbb{R}^{d}),$ then $(-\Delta )^{-\beta /2}(f)\in
\mathcal{S}(\mathbb{R}^{d}).$ From equation  (\ref{cfsf}), and Lemma \ref{l1s117}(i),
\begin{eqnarray}
& & \int_{\mathbb{R}^{d}}\frac{1}{\gamma (2\beta)}\|\mathbf{z}\|^{-d+2\beta }\overline{f(\mathbf{z})}d\mathbf{z}=
\int_{\mathbb{R}^{d}}\|\mathbf{z}\|^{-2\beta }\overline{\mathcal{F}(f
)(\mathbf{z})}d\mathbf{z}\nonumber\\
& &=\int_{\mathbb{R}^{d}}\frac{1}{[\gamma (\beta
)]^{2}}\left[\int_{\mathbb{R}^{d}}\|\mathbf{z}-\mathbf{y}\|^{-d+\beta
}\|\mathbf{y}\|^{-d+\beta
}d\mathbf{y}\right]\overline{f(\mathbf{z})}d\mathbf{z},\quad \forall
f\in \mathcal{S}(\mathbb{R}^{d}),\quad
0<\beta<d/2.\nonumber\\\label{cfsf2}
\end{eqnarray}

Let us now consider on the space of infinitely differentiable
functions with compact support contained in $D,$ $C_{0}^{\infty
}(D)\subset \mathcal{S}(\mathbb{R}^{d}),$  the norm

\begin{eqnarray} &&\|f\|^{2}_{(-\Delta
)^{\alpha -d/2}}
%=(-\Delta )^{\alpha -d/2}(f)(f)
=\left\langle (-\Delta )^{\alpha -d/2}(f),f\right\rangle_{L^{2}(\mathbb{R}^{d})}=\left\langle (-\Delta )^{\alpha -d/2}(f),f\right\rangle_{L^{2}(D)}\nonumber\\
& &
= \int_{\mathbb{R}^{d}}(-\Delta )^{\alpha
-d/2}(f)(\mathbf{x})\overline{f(\mathbf{x})}d\mathbf{x}
=\int_{\mathbb{R}^{d}}\frac{1}{\gamma (d-2\alpha
)}\int_{\mathbb{R}^{d}} \frac{1}{\|\mathbf{x}-\mathbf{y}\|^{2\alpha
}}f(\mathbf{y})\overline{f(\mathbf{x})}d\mathbf{y}d\mathbf{x}\nonumber\\
 && =\frac{1}{(2\pi)^{d}}\int_{\mathbb{R}^{d}} |\mathcal{F}(f)(\boldsymbol{\lambda
})|^{2}\|\boldsymbol{\lambda }\|^{-(d-2\alpha ) }d\boldsymbol{\lambda },\quad \forall f\in
C_{0}^{\infty }(D),\quad 0<\alpha <d/2. \label{eqrkdef}
\end{eqnarray}
 \noindent The associated inner product is given by
\begin{eqnarray}\left\langle f,g\right\rangle_{(-\Delta )^{\alpha
-d/2}} &=&
\int_{\mathbb{R}^{d}}\frac{1}{\gamma (d-2\alpha
)}\int_{\mathbb{R}^{d}} \frac{1}{\|\mathbf{x}-\mathbf{y}\|^{2\alpha
}}f(\mathbf{y})\overline{g(\mathbf{x})}d\mathbf{y}d\mathbf{x}\nonumber\\
&=&
\int_{D}\frac{1}{\gamma (d-2\alpha
)}\int_{D} \frac{1}{\|\mathbf{x}-\mathbf{y}\|^{2\alpha
}}f(\mathbf{y})\overline{g(\mathbf{x})}d\mathbf{y}d\mathbf{x},
\label{ip}
\end{eqnarray}
\noindent for all $f,g\in C_{0}^{\infty }(D).$ The closure of
$C_{0}^{\infty }(D)$ with the norm $\|\cdot\|_{(-\Delta )^{\alpha
-d/2}},$ introduced  in (\ref{eqrkdef}), defines a Hilbert space,
which  will be denoted as
 $\mathcal{H}_{2\alpha -d}=\overline{C_{0}^{\infty
}(D)}^{\|\cdot \|_{(-\Delta )^{\alpha -d/2}}}.$

\begin{remark}
\label{insp} \textrm{For a bounded open domain $D,$ from Proposition
2.2. in  \cite{Caetano}, with $D=n-1,$ $p=q=2,$  and $s=0$ (hence,
$A_{pq}^{s}(D)=A_{22}^{0}(D)=L^{2}(D),$ where, as usual,  $L^{2}(D)$
denotes the space of square integrable functions on $D$), we have}
\begin{equation}\overline{C_{0}^{\infty }(D)}^{\|\cdot
\|_{L^{2}(\mathbb{R}^{d})}}=L^{2}(D),\label{idl2c}
\end{equation}
\noindent \textrm{(see also \cite{Triebel}, for the case of regular
bounded open domains with $C^{\infty}-$boundaries). In addition, for
all $f\in C_{0}^{\infty }(D),$ by definition of the norm
(\ref{eqrkdef}),}
$$\|f\|_{(-\Delta
)^{\alpha -d/2}}\leq C\|f\|_{L^{2}(\mathbb{R}^{d})},$$ \noindent
\textrm{that is, all convergent sequences of $C_{0}^{\infty }(D)$ in
the $L^{2}(\mathbb{R}^{d})$ norm are also convergent in the
$\mathcal{H}_{2\alpha -d}$ norm. Hence, the closure of
$C_{0}^{\infty }(D),$ with respect to the norm $\|\cdot
\|_{L^{2}(\mathbb{R}^{d})},$ is included in the closure of
$C_{0}^{\infty }(D),$ with respect to the norm $\|\cdot\|_{(-\Delta
)^{\alpha -d/2}}.$ Therefore, from equation (\ref{idl2c}),}
\begin{equation}L^{2}(D)=\overline{C_{0}^{\infty
}(D)}^{\|\cdot \|_{L^{2}(\mathbb{R}^{d})}} \subseteq
\overline{C_{0}^{\infty }(D)}^{\|\cdot \|_{(-\Delta )^{\alpha
-d/2}}}=\mathcal{H}_{2\alpha -d}.\label{eics0}
\end{equation}

\end{remark}

 The   asymptotic order of the eigenvalues of operator
$\mathcal{K}_{\alpha}$ on $L^{2}(D),$ in the case $d\geq 2,$ are given in the next
result (see, for
example, \cite{Yang}, \cite{Widom} and \cite{Zahle},
p.197).  (See also \cite{Dostanic} and \cite{Veillette},
for the case $d=1$).

\begin{theorem}
\label{WaeK}

 Let us consider the integral operator
$\mathcal{K}_{\alpha }$ introduced in equation (\ref{RKD}) as an
operator on the space $L^{2}(D),$ with $D$ being a bounded open
domain of $\mathbb{R}^{d}.$ The
 following asymptotic is
satisfied by the eigenvalues  $\lambda_{k}(\mathcal{K}_{\alpha }),$
$k\geq 1,$ of operator $\mathcal{K}_{\alpha }:$
\begin{equation}
\lim_{k\longrightarrow \infty }\frac{\lambda_{k}(\mathcal{K}_{\alpha })}{%
k^{-(d-\alpha )/d}}=\widetilde{c}(d,\alpha )|D |^{(d-\alpha )/d},
\label{eqfa}
\end{equation}
\noindent where $|D|$ denotes, as before, the Lebesgue measure of
domain $D,$ and
\begin{equation}
\widetilde{c}(d,\alpha )= \pi^{\alpha
/2}\left(\frac{2}{d}\right)^{(d-\alpha )/d}\frac{\Gamma
\left(\frac{d-\alpha}{2}\right)}{\Gamma \left(\frac{\alpha
}{2}\right)\left[\Gamma
\left(\frac{d}{2}\right)\right]^{(d-\alpha)/d}}.
  \label{eqctilde}
\end{equation}

\end{theorem}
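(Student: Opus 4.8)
The plan is to recognize $\mathcal{K}_{\alpha}$ as a constant multiple of a Riesz-potential operator restricted to $D$, to quote the Weyl-type law for its eigenvalue counting function, and then to pass from the counting function to the ordered eigenvalues in (\ref{eqfa}) by a monotonicity (Tauberian) argument, computing the constant $\widetilde{c}(d,\alpha)$ explicitly along the way. By (\ref{rps117})--(\ref{eRc}), the kernel $\|\mathbf{x}-\mathbf{y}\|^{-\alpha}$ is exactly $\gamma(d-\alpha)$ times the kernel of the Riesz potential $(-\Delta)^{-(d-\alpha)/2}$ of order $\beta=d-\alpha$; hence, as an operator on $L^{2}(D)$, $\mathcal{K}_{\alpha}$ equals $\gamma(d-\alpha)$ times the compression to $D$ (extension by zero, apply $(-\Delta)^{-(d-\alpha)/2}$, restrict to $D$) of the Riesz potential. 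Since $0<\alpha<d$, the kernel is locally of weak type $L^{d/\alpha}$, so $\mathcal{K}_{\alpha}$ is compact, positive and self-adjoint, with ``principal symbol'' $\sigma(\boldsymbol{\xi})=\gamma(d-\alpha)\|\boldsymbol{\xi}\|^{-(d-\alpha)}$ (with respect to the Fourier normalization used in Lemma~\ref{l1s117}).

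First I would invoke the Weyl asymptotics for such negative-order operators, as developed by Birman--Solomyak and in \cite{Widom}, \cite{Zahle}, \cite{Yang}: setting $N(\lambda)=\#\{k\ge 1:\lambda_{k}(\mathcal{K}_{\alpha})>\lambda\}$, one has, as $\lambda\downarrow 0$,
\[
N(\lambda)\sim \frac{|D|}{(2\pi)^{d}}\,\mathrm{Leb}\Bigl\{\boldsymbol{\xi}\in\mathbb{R}^{d}:\gamma(d-\alpha)\|\boldsymbol{\xi}\|^{-(d-\alpha)}>\lambda\Bigr\}.
\]
The set on the right is the open ball of radius $\bigl(\gamma(d-\alpha)/\lambda\bigr)^{1/(d-\alpha)}$, whose Lebesgue measure is $\omega_{d}\,\gamma(d-\alpha)^{d/(d-\alpha)}\,\lambda^{-d/(d-\alpha)}$ with $\omega_{d}=\pi^{d/2}/\Gamma(\tfrac d2+1)=2\pi^{d/2}/(d\,\Gamma(\tfrac d2))$. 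Therefore $N(\lambda)\sim C\lambda^{-d/(d-\alpha)}$ as $\lambda\downarrow 0$, with $C=(2\pi)^{-d}\,|D|\,\omega_{d}\,\gamma(d-\alpha)^{d/(d-\alpha)}$.

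Next I would use the elementary fact that a non-increasing step function $N$ with $N(\lambda)\sim C\lambda^{-\rho}$ as $\lambda\downarrow 0$ has its $k$-th jump point asymptotic to $C^{1/\rho}k^{-1/\rho}$; applied with $\rho=d/(d-\alpha)$ this gives $\lambda_{k}(\mathcal{K}_{\alpha})\sim C^{(d-\alpha)/d}\,k^{-(d-\alpha)/d}$. It then remains to substitute $\gamma(d-\alpha)=\pi^{d/2}2^{d-\alpha}\Gamma(\tfrac{d-\alpha}{2})/\Gamma(\tfrac{\alpha}{2})$ from (\ref{eRc}) and the above $\omega_{d}$ into $C^{(d-\alpha)/d}=(2\pi)^{-(d-\alpha)}\,\omega_{d}^{(d-\alpha)/d}\,|D|^{(d-\alpha)/d}\,\gamma(d-\alpha)$ and collect terms: the powers of $\pi$ add up to $\pi^{\alpha/2}$, the powers of $2$ and $d$ to $(2/d)^{(d-\alpha)/d}$, and the Gamma factors to $\Gamma(\tfrac{d-\alpha}{2})/\bigl(\Gamma(\tfrac{\alpha}{2})\,\Gamma(\tfrac d2)^{(d-\alpha)/d}\bigr)$, so that $C^{(d-\alpha)/d}=\widetilde{c}(d,\alpha)\,|D|^{(d-\alpha)/d}$ with $\widetilde{c}(d,\alpha)$ as in (\ref{eqctilde}). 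This last simplification is routine bookkeeping.

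The main obstacle is the first step: justifying the Weyl law for an operator whose kernel is only weakly (and homogeneously) singular, on a domain $D$ assumed merely bounded and open. For $0<\alpha<d/2$, $\mathcal{K}_{\alpha}$ is Hilbert--Schmidt, and one can obtain the asymptotics by Dirichlet--Neumann bracketing over a covering of $D$ by small cubes together with the exact scaling relation --- already used elsewhere in the paper --- that the eigenvalues of $\mathcal{K}_{\alpha}$ on the homothetic image $D(T)$ are $T^{d-\alpha}\lambda_{k}(\mathcal{K}_{\alpha})$ by homogeneity of the kernel, combined with a Tauberian argument applied to the traces $\mathrm{Tr}(\mathcal{K}_{\alpha}^{m})$ for $m$ large enough that $\mathcal{K}_{\alpha}^{m}$ is trace class; for $d/2\le \alpha<d$ one additionally needs a Cwikel--Birman--Solomyak type bound to control the contributions of the individual cubes. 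In the written proof I would simply cite \cite{Widom}, \cite{Zahle} and \cite{Yang} for this asymptotics and devote the argument to the symbol identification and the constant computation.
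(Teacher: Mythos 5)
Your proposal is correct and rests on the same key external input as the paper: the paper's proof simply identifies the kernel $\|\mathbf{x}-\mathbf{y}\|^{-\alpha}$ with the case $\alpha\in(-d,0)$ of Widom's integral equation and substitutes into his explicit eigenvalue asymptotic (equation (2) of \cite{Widom}), which is exactly the Weyl-type law you quote in counting-function form before inverting it. Your rederivation of $\widetilde{c}(d,\alpha)$ from the symbol $\gamma(d-\alpha)\|\boldsymbol{\xi}\|^{-(d-\alpha)}$ and the phase-space volume is a correct (and somewhat more informative) repackaging of the same constant, so the two arguments are essentially identical in substance.
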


\begin{proof}

We apply  the  results derived in \cite{Widom}, on the asymptotic
behavior of the eigenvalues associated with certain class of
integral equations. Specifically,  the following integral equation
is considered in that paper:
\begin{equation}\int V^{1/2}(\mathbf{x})k(\mathbf{x}-\mathbf{y})
V^{1/2}(\mathbf{y})f(\mathbf{y})d\mathbf{y}=\lambda f(\mathbf{x}),
\label{eivalue}
\end{equation}

\noindent where $k$ is an integrable function over a Euclidean space
$E_{d}$ of dimension $d,$ having positive Fourier transform, and
where $V$ is a bounded non-negative function with bounded support.
In particular, \cite{Widom} considers the case where
$E_{d}=\mathbb{R}^{d},$ $V$ is the indicator function of a bounded
domain $D\subseteq \mathbb{R}^{d},$ and
$k(\|\mathbf{x}-\mathbf{y}\|)=\|\mathbf{x}-\mathbf{y}\|^{\alpha },$
for $\alpha >-d,$ and $\alpha \neq 0,2,4,\dots.$ Function  $k$
coincides in $\mathbb{R}^{d}\setminus D$ with a function whose
Fourier transform $f(\boldsymbol{\xi})$ is asymptotically equal to
$$2^{d-\alpha }\pi^{d/2}\frac{\Gamma \left(\frac{d-\alpha }{2}\right)}{\Gamma \left(\frac{\alpha
}{2}\right)}|\boldsymbol{\xi}|^{-d+\alpha }$$

\noindent (see also the right-hand side of equation (\ref{fist117})
for $\beta =d-\alpha ,$ with $0<\alpha <d$). For $\alpha >-d,\
\alpha \neq 0,2,4,\dots,$
the following asymptotic of the eigenvalues of the
integral operator  with kernel $k(\|\mathbf{x}-\mathbf{y}\|)=\|\mathbf{x}-\mathbf{y}\|^{\alpha }$ is given in equation (2) in \cite{Widom}:  \begin{equation}\lambda_{k}\sim
\pi^{-\alpha /2}\left(\frac{2}{d}\right)^{\frac{d+\alpha
}{d}}\frac{\Gamma \left(\frac{d+\alpha }{2}\right)}{\Gamma
\left(\frac{-\alpha }{2}\right)\left[\Gamma
\left(\frac{d}{2}\right)\right]^{(d+\alpha
)/d}}\left[\int_{\mathbb{R}^{d}}\left[V(\mathbf{x})\right]^{d/(d+\alpha
)}d\mathbf{x}\right]^{(d+\alpha )/d}k^{-(d+\alpha )/d},\label{eqwae}
\end{equation}
\noindent with
$$\int_{\mathbb{R}^{d}}\left[V(\mathbf{x})\right]^{d/(d-\alpha
)}d\mathbf{x}=|D|.$$ \noindent Since function $k$
 in \cite{Widom} coincides with the kernel of the integral
operator $\mathcal{K}_{\alpha }$  in equation (\ref{RKD}),
for $\alpha \in (-d,0),$ equation (\ref{eqwae}) then
leads to the following asymptotic of the eigenvalues of $\mathcal{K}_{\alpha }:$
$$\lambda_{k}(\mathcal{K}_{\alpha })\sim \pi^{\alpha /2}\left(\frac{2}{d}\right)^{\frac{d-\alpha
}{d}}\frac{\Gamma \left(\frac{d-\alpha }{2}\right)}{\Gamma
\left(\frac{\alpha }{2}\right)\left[\Gamma
\left(\frac{d}{2}\right)\right]^{(d-\alpha
)/d}}\left[\int_{\mathbb{R}^{d}}\left[V(\mathbf{x})\right]^{d/(d-\alpha
)}d\mathbf{x}\right]^{(d-\alpha )/d}k^{-(d-\alpha )/d}.$$

\end{proof}

\begin{remark}
\label{remfv2} \textrm{Similar results to those ones presented in
Theorem 3.2 of \cite{Veillette} can be derived for the
spectral zeta function of
the Dirichlet Laplacian on a bounded closed multidimensional interval of $%
\mathbb{R}^{d}$ (see also \cite{Dostanic}, for the case of $d=1$).
For a continuous function of the negative Dirichlet Laplacian, the
explicit computation of its trace cannot always be obtained in a general regular bounded open domain of $%
\mathbb{R}^{d}.$  Specifically, the knowledge of the eigenvalues is
guaranteed for highly symmetric regions like the  the sphere, or
regions bounded by parallel planes (see, for example, \cite{Muller};
\cite{Park02a}; \cite{Park02b}). In particular, for the torus
$\mathbb{T}^{2}$ in $\mathbb{R}^{2},$ the Spectral Zeta Function can
be explicitly computed (see, for example, \cite{Arendt},
Chapter 1, equation (1.49), pp. 28-29).}
\end{remark}

The following condition is assumed to be satisfied by  the slowly  varying function $\mathcal{L}$  in  Theorem \ref{pr1} below.

\bigskip

\noindent {\bfseries Condition A2.} For every $m\geq 2,$ there exists a
constant $C>0$ such that
\begin{equation*}
\int_{D}..(m).\int_{D}\frac{\mathcal{L}(T\Vert \mathbf{x}_{1}-\mathbf{x}%
_{2}\Vert )}{\mathcal{L}(T)\Vert \mathbf{x}_{1}-\mathbf{x}_{2}\Vert ^{\alpha
}}\frac{\mathcal{L}(T\Vert \mathbf{x}_{2}-\mathbf{x}_{3}\Vert )}{\mathcal{L}%
(T)\Vert \mathbf{x}_{2}-\mathbf{x}_{3}\Vert ^{\alpha }}\cdot \cdot \cdot
\frac{\mathcal{L}(T\Vert \mathbf{x}_{m}-\mathbf{x}_{1}\Vert )}{\mathcal{L}%
(T)\Vert \mathbf{x}_{m}-\mathbf{x}_{1}\Vert ^{\alpha }}d\mathbf{x}_{1}d%
\mathbf{x}_{2}\cdot \cdot \cdot d\mathbf{x}_{m}\leq
\end{equation*}%
\begin{equation}
\leq
C\int_{D}...(m).\int_{D}\frac{d\mathbf{x}_{1}d\mathbf{x}_{2}\cdot
\cdot \cdot d\mathbf{x}_{m}}{\Vert
\mathbf{x}_{1}-\mathbf{x}_{2}\Vert ^{\alpha }\Vert
\mathbf{x}_{2}-\mathbf{x}_{3}\Vert ^{\alpha }\cdot \cdot \cdot \Vert
\mathbf{x}_{m}-\mathbf{x}_{1}\Vert ^{\alpha }}. \label{A2}
\end{equation}

\bigskip Note that {\bfseries Condition A2} is satisfied by slowly varying
functions such that
\begin{equation}
\sup_{T,\mathbf{x}_{1},\mathbf{x}_{2}\in D}\frac{\mathcal{L}(T\Vert \mathbf{x%
}_{1}-\mathbf{x}_{2}\Vert )}{\mathcal{L}(T)}\leq C_{0},  \label{scA2}
\end{equation}
\noindent for $0<C_{0}\leq 1.$ This condition holds for bounded
slowly varying functions as in (\ref{svfex}),  in the case where
$D\subseteq \mathcal{B}_{1}(\mathbf{0}),$
with $\mathcal{B}_{1}(\mathbf{0})=\{ \mathbf{x}\in \mathbb{R}^{d},\ \|\mathbf{x}%
\|\leq 1\}.$

For the derivation of the limit distribution, when $T\longrightarrow \infty ,$
of the functional (\ref{eq2}), we first compute its variance, in terms of $%
H_{2},$ the Hermite polynomial of order $2.$ It is well-known that Hermite
polynomials form a complete orthogonal system of the Hilbert space $L_{2}(%
\mathbb{R},\varphi (u)du),$ the space of square integrable functions with
respect to the standard normal density $\varphi .$ They are defined as
follows:
\begin{equation*}
H_{k}(u)=(-1)^{k}e^{\frac{u^{2}}{2}}\frac{d^{k}}{du^{k}}e^{-\frac{u^{2}}{2}%
},\quad k=0,1,\ldots .
\end{equation*}

In particular, for a zero-mean Gaussian random field $Y,$ for $k\geq 1,$
\begin{equation}
\mathrm{E}\ H_{k}(Y(\mathbf{x}))=0,\quad \mathrm{E}\ \left( H_{k}(Y(\mathbf{x%
}))\ H_{m}(Y(\mathbf{y}))\right) =\delta _{m,k}\ m!\ \left( \mathrm{E}[Y(%
\mathbf{x})Y(\mathbf{y})]\right) ^{m}  \label{2.7}
\end{equation}%
\noindent (see, for example, \cite{Peccati}).

We use some ideas from  \cite{Ivanov},
Sections 1.4, 1.5 and 2.1). Consider the uniform distribution on
$D(T)$ with the density:
\begin{equation}
P_{D(T)}(\mathbf{x})= T^{-d}|D|^{-1}\mathbb{I}_{\mathbf{x}\in
D(T)},\quad \mathbf{x}\in \mathbb{R}^{d},\label{equd}
\end{equation}
\noindent where $\mathbb{I}_{\mathbf{x}\in D(T)}$ denotes  the
indicator function of set $D(T).$

Let $\mathbf{U}$ and $\mathbf{V}$ be two  independent and uniformly
distributed  inside the set $D(T)$ random vectors. We denote
$\psi_{D(T)}(\rho),$  the density of the Euclidean distance
$\|\mathbf{U}-\mathbf{V}\|.$ Note that $\psi_{D(T)}(\rho)=0,$ if
$\rho>\mbox{diam}\left( D(T)\right),$ and $\psi_{D(1)}(\rho)$ is
bounded, where $\mbox{diam}\left( D(T)\right)$ is the diameter of
the set $D(T).$

Using the above notation, we obtain
\begin{eqnarray}
\int_{D(T)}\int_{D(T)}G(\|\mathbf{x}-\mathbf{y}\|)d\mathbf{x}d\mathbf{y}&=&
|D(T)|^{2}E\left[G\left(
\|\mathbf{U}-\mathbf{V}\|\right)\right]\nonumber\\
&=& |D|^{2}T^{2d}\int_{0}^{\mbox{diam}(D(T))}
G(\rho)\psi_{D(T)}(\rho)d\rho,
 \label{intG}
\end{eqnarray}

\noindent for any Borel function $G$ such that the Lebesgue integral
(\ref{intG}) exists. In particular, under {\bfseries Conditions
A1--A2} for $0<\alpha <d/2,$ and $T\rightarrow \infty,$ we obtain
\begin{eqnarray}
\sigma^{2}(T)&=&\mbox{Var}\left[\int_{D(T)}H_{2}(Y(\mathbf{x}))d\mathbf{x}\right]=
2\int_{D(T)}\int_{D(T)}\frac{\mathcal{L}^{2}\left(\|\mathbf{x}-\mathbf{y}\|\right)}{\|\mathbf{x}-\mathbf{y}\|^{2\alpha
}}d\mathbf{x}d\mathbf{y}\nonumber\\
&=& 2!
|D|^{2}T^{2d}\int_{0}^{\mbox{diam}(D(T))}\mathcal{L}^{2}(\rho)\rho^{-2\alpha
}\psi_{D(T)}(\rho )d\rho.\label{eqsigma}
\end{eqnarray}

 In equation (\ref{eqsigma}), consider the change of
variable $u=\rho/T.$ Applying the consistency of the uniform
distribution with a homothetic transformation, and  the asymptotic
properties of slowly varying functions (see Theorem 2.7 in \cite{Seneta}) we get
\begin{eqnarray}
\sigma^{2}(T)&=& 2|D|^{2}T^{2d-2\alpha
}\int_{0}^{\mbox{diam}(D)}u^{-2\alpha
}\mathcal{L}^{2}(uT)\psi_{D}(u)du\nonumber\\
&=& |D|^{2}T^{2d-2\alpha }\mathcal{L}^{2}(T)[a_{d}(D)]^{2}(1+o(1)),\
0<\alpha<d/2,\quad  T\rightarrow \infty,\label{eqintsigma}
\end{eqnarray}
\noindent where, by (\ref{intG}),
\begin{equation}a_{d}(D)=\left[2\int_{0}^{\mbox{diam}(D)}u^{-2\alpha
}\psi_{D}(u)du\right]^{1/2}=\left[2\int_{D}\int_{D}\frac{d\mathbf{x}d\mathbf{y}}{\|\mathbf{x}-\mathbf{y}\|^{2\alpha
}}\right]^{1/2}.\label{functad}
\end{equation}
\noindent More details,  including properties of slowly varying
functions, can be found in \cite{Anh}.

If $D$ is the ball $\mathcal{B}_{T}(\mathbf{0})=\{ \mathbf{x}\in
\mathbb{R}^{d}:\ \|\mathbf{x}\|\leq T\},$ then (see \cite{Ivanov}, Lemma 1.4.2)
\begin{equation}\psi_{\mathcal{B}_{T}(\mathbf{0})}(\rho)=T^{-d}I_{1-\left(\frac{\rho}{2T}\right)^{2}}\left(\frac{d+1}{2},\frac{1}{2}\right)d\rho,\quad
0\leq \rho\leq 2T,\label{densityball}\end{equation} \noindent where
\begin{equation}
I_{\mu}(p,q)= \frac{\Gamma (p+q)}{\Gamma(p)\Gamma
(q)}\int_{0}^{\mu}t^{p-1}(1-t)^{q-1}dt,\quad \mu\in [0,1],\quad p>0,\
q>0,\label{funcball}
\end{equation}
\noindent is the incomplete beta function. In this case, one can
show (see Lemma 2.1.3 in \cite{Ivanov})
\begin{equation}
a_{d}(B_{1}(\mathbf{0}))=\frac{2^{d-2\alpha +2}\pi^{d-1/2}\Gamma
\left(\frac{d-2\alpha +1}{2}\right)}{(d-2\alpha )\Gamma
\left(\frac{d}{2}\right)\Gamma (d-\alpha +1)}.\label{eedeball}
\end{equation}

For $d=1,$ $D=[0,1],$
$$a_{1}([0,1])=2\int_{0}^{1}\int_{0}^{1}\frac{dxdy}{|x-y|^{2\alpha
}}=\frac{1}{(1-\alpha )(1-2\alpha )},\quad 0<\alpha<1/2.$$

Let us now consider domain $D=[0,l_{1}]\times \dots \times
[0,l_{d}]\subset \mathbb{R}^{d},$ $l_{i}>0,$ $i=1,\dots,d.$ The
Dirichlet negative Laplacian operator on such a domain has
eigenvectors $\{\phi_{k}\}_{k\geq 1}$ and eigenvalues
$\{\lambda_{k}(-\Delta)\}_{k\geq 1}$ given by
\begin{eqnarray}
\phi_{k}(\mathbf{x}) &=& \prod_{i=1}^{d}\sin\left(\frac{\pi k_{i}
x_{i}}{l_{i}}\right),\quad \mathbf{x}=(x_{1},\dots,x_{d})\in
[0,l_{1}]\times \dots \times[0,l_{d}],\quad k_{i}\geq 1,\quad
i=1,\dots,d\nonumber\\
\lambda_{\mathbf{k}}(-\Delta ) &=&
\sum_{i=1}^{d}\frac{\pi^{2}k_{i}^{2}}{l_{i}^{2}},\quad
\mathbf{k}=(k_{1},\dots,k_{d}),\quad k_{i}\geq 1,\quad
i=1,\dots,d.\nonumber\\ \label{eqdomainrect}
\end{eqnarray}

The norm $\|\mathcal{K}_{\alpha
}^{2}\|_{\mathcal{L}(L^{2}(D))}$ of $\mathcal{K}_{\alpha }^{2}$ in $\mathcal{L}(L^{2}(D)),$ the
space of bounded linear operators on $L^{2}(D),$ is given by the
supremum of its eigenvalues. From Theorem 4.5(ii) in
\cite{ChengSong05},
\begin{equation}\lambda_{k}(\mathcal{K}_{\alpha }^{2})\leq 2
\left[\sum_{i=1}^{d}\frac{\pi^{2}k_{i}^{2}}{l_{i}^{2}}\right]^{\alpha
-d},\quad d-\alpha \in (0,1).\label{exth2i}
\end{equation}

\noindent Note that Theorem 4.5(ii) in \cite{ChengSong05} holds for
a bounded convex domain in $\mathbb{R}^{d}.$ From equation
(\ref{exth2i}), if $\min \{ l_{1},\dots, l_{d}\}\leq
\sqrt{\frac{d}{2}}\pi,$
\begin{equation}\|\mathcal{K}_{\alpha
}^{2}\|_{\mathcal{L}(L^{2}(D))}\leq 1.\label{nc}\end{equation}

\begin{theorem}
\label{pr1} Let $D$ be a regular bounded  open  domain. Assume that
{\bfseries Conditions A1} and {\bfseries A2} are satisfied. The
following assertions then hold:
\medskip

\begin{itemize}
\item[(i)] As $T\longrightarrow \infty ,$  the functional $S_{T}$ in (\ref{eq2}) converges in distribution
sense  to a zero-mean  random variable $S_{\infty } ,$ with
characteristic function given by
\begin{equation}
\psi (z)=E\left[\exp(izS_{\infty})\right]=\exp \left( \frac{1}{2}\sum_{m=2}^{\infty }\frac{\left( 2\mathrm{i}%
z\right) ^{m}}{m}c_{m}\right),\quad z\in \mathbb{R},  \label{chf}
\end{equation}%
\noindent where, for $m\geq 2,$
\begin{eqnarray}
c_{m}&=&\mbox{Tr}\left(\mathcal{K}_{\alpha }^{m}\right) \nonumber\\
&=&\int_{D}\underset{(m)}{\cdots }\int_{D}\frac{1}{\Vert \mathbf{x}_{1}-%
\mathbf{x}_{2}\Vert ^{\alpha }}\frac{1}{\Vert \mathbf{x}_{2}-\mathbf{x}%
_{3}\Vert ^{\alpha }}\cdots \frac{1}{\Vert \mathbf{x}_{m}-\mathbf{x}%
_{1}\Vert ^{\alpha }}d\mathbf{x}_{1}\dots d\mathbf{x}_{m}.
\label{eqcoefchfbcc}
\end{eqnarray}

\item[(ii)] The functional
\begin{equation*}
S_{T}^{H}=\frac{1}{\mathcal{L}(T)T^{d-\alpha }}\left[\int_{D (T)}G(Y(\mathbf{%
x}))d\mathbf{x}-C_{0}^{H}T^{d}|D|\right]
\end{equation*}
\noindent converges in distribution sense, as $T\longrightarrow \infty,$ to
the random variable $\frac{1}{2}C_{2}^{H}S_{\infty },$ with $S_{\infty }$
having characteristic function (\ref{chf}), and with $G\in L^{2}(%
\mathbb{R},\varphi (x)dx)$ having Hermite rank $m=2.$ Here,
\begin{eqnarray}
C_{0}^{H} &=&\int_{\mathbb{R}}
G(u)H_{0}(u)\varphi(u)du=\mathrm{E}[G(Y(\mathbf{x}))]
\notag \\
C_{2}^{H}&=&\int_{\mathbb{R}}G(u)H_{2}(u)\varphi (u)du,  \notag
\end{eqnarray}
\noindent respectively denote the $0$th and $2$th Hermite coefficients of
the function $G.$
\end{itemize}
\end{theorem}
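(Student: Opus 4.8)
\textbf{Proof plan for Theorem \ref{pr1}.}

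The plan is to prove part (i) by showing that the characteristic function of $S_T$ converges pointwise to $\psi(z)$ in \eqref{chf}, and then deduce (ii) from (i) via the Hermite expansion of $G$. For part (i), the first step is to replace $Y^2(\mathbf{x})-1$ by $H_2(Y(\mathbf{x}))$ (since $H_2(u)=u^2-1$) and write, using the Karhunen-Lo\`eve expansion of $Y$ restricted to $D(T)$ recorded in \eqref{Ysquare}, that
\begin{equation*}
S_T=\frac{1}{d_T}\sum_{j=1}^\infty \lambda_{j,T}(R_{Y,D(T)})(\eta_{j,T}^2-1).
\end{equation*}
By the independence of the $\eta_{j,T}$ in the Gaussian case and the Fredholm-determinant identity of Remark \ref{rem2} (applied after the recentring), the characteristic function of $S_T$ is
\begin{equation*}
E[\exp(\mathrm{i}z S_T)]=\prod_{j=1}^\infty \exp\!\Big(-\tfrac{\mathrm{i}z\lambda_{j,T}}{d_T}\Big)\Big(1-\tfrac{2\mathrm{i}z\lambda_{j,T}}{d_T}\Big)^{-1/2}
=\exp\!\Big(\tfrac12\sum_{m=2}^\infty \frac{(2\mathrm{i}z)^m}{m}\,\frac{\mathrm{Tr}(R_{Y,D(T)}^m)}{d_T^{\,m}}\Big),
\end{equation*}
valid for $z$ in a neighbourhood of the origin; the $m=1$ term cancels because of the centring constant $-1$. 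So the problem reduces to showing, for each fixed $m\ge 2$,
\begin{equation}
\frac{\mathrm{Tr}(R_{Y,D(T)}^m)}{d_T^{\,m}}\;\longrightarrow\; c_m=\mathrm{Tr}(\mathcal{K}_\alpha^m)\qquad(T\to\infty),
\label{traceconv}
\end{equation}
together with a uniform domination of the series in $m$ that legitimizes passing to the limit term by term and then extends the identity from a neighbourhood of $0$ to all $z\in\mathbb{R}$ by analyticity/continuity of characteristic functions.

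The heart of the argument is \eqref{traceconv}. Using \eqref{TR3} with $B_{0,T}(\mathbf{z})=B(\Vert\mathbf{z}\Vert)=\mathcal{L}(\Vert\mathbf{z}\Vert)\Vert\mathbf{z}\Vert^{-\alpha}$ and the change of variables $\mathbf{x}_j=T\mathbf{u}_j$ (so $\mathbf{u}_j$ ranges over $D$), one gets
\begin{equation*}
\frac{\mathrm{Tr}(R_{Y,D(T)}^m)}{d_T^{\,m}}=\int_D\!\cdots\!(m)\cdots\!\int_D \prod_{j=1}^{m}\frac{\mathcal{L}(T\Vert \mathbf{u}_{j+1}-\mathbf{u}_j\Vert)}{\mathcal{L}(T)\,\Vert \mathbf{u}_{j+1}-\mathbf{u}_j\Vert^{\alpha}}\,d\mathbf{u}_1\cdots d\mathbf{u}_m,
\end{equation*}
with indices taken cyclically ($\mathbf{u}_{m+1}=\mathbf{u}_1$); the powers of $T$ match exactly because $d_T^{\,m}=T^{m(d-\alpha)}\mathcal{L}(T)^m$ and the Jacobian contributes $T^{md}$ while the kernels contribute $T^{-m\alpha}$. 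Pointwise in $(\mathbf{u}_1,\dots,\mathbf{u}_m)$ with all $\mathbf{u}_{j+1}\ne\mathbf{u}_j$, the uniform convergence property \eqref{eq2b} of $\mathcal{L}$ forces each ratio $\mathcal{L}(T\Vert\mathbf{u}_{j+1}-\mathbf{u}_j\Vert)/\mathcal{L}(T)\to1$; \textbf{Condition A2} provides precisely the integrable dominating function
$C\prod_{j}\Vert\mathbf{u}_{j+1}-\mathbf{u}_j\Vert^{-\alpha}$, which is finite on $D^m$ since $\alpha<d/2<d$ (the relevant singularities are of Riesz type and integrable; finiteness of $c_m$ also follows from $\mathcal{K}_\alpha$ being bounded on $L^2(D)$ with summable enough eigenvalues by Theorem \ref{WaeK}, using $\mathrm{Tr}(\mathcal{K}_\alpha^m)=\sum_k\lambda_k(\mathcal{K}_\alpha)^m$ and $\lambda_k(\mathcal{K}_\alpha)\asymp k^{-(d-\alpha)/d}$, so the series converges for $m\ge2$ because $m(d-\alpha)/d>1$). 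Dominated convergence then yields \eqref{traceconv} with limit $\int_{D^m}\prod_j\Vert\mathbf{u}_{j+1}-\mathbf{u}_j\Vert^{-\alpha}=\mathrm{Tr}(\mathcal{K}_\alpha^m)=c_m$, matching \eqref{eqcoefchfbcc}. I expect \emph{this domination/interchange step} to be the main obstacle — not the pointwise limit, which is immediate, but justifying the exchange of limit and the infinite sum over $m$ uniformly for $z$ near $0$; one handles it by bounding $\mathrm{Tr}(R_{Y,D(T)}^m)/d_T^m\le C_0^{m}$ for a constant $C_0$ depending only on $D$ (via \textbf{Condition A2} and the estimate $\|\mathcal{K}_\alpha^2\|\le\|\mathcal{K}_\alpha\|^2$, cf.\ \eqref{nc} and \eqref{scA2}), so that $\sum_m \frac{(2|z|)^m}{m}C_0^m<\infty$ for $|z|<1/(2C_0)$, and then invoking the continuity theorem for characteristic functions plus a monotonicity/analytic-continuation argument to remove the restriction on $z$.

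Finally, for part (ii): since $G\in L^2(\mathbb{R},\varphi(x)dx)$ has Hermite rank $2$, expand $G(u)=\sum_{k\ge2}\frac{C_k^H}{k!}H_k(u)$ with $C_0^H=\mathrm{E}[G(Y)]$, so that $\int_{D(T)}G(Y(\mathbf{x}))d\mathbf{x}-C_0^HT^d|D|=\sum_{k\ge2}\frac{C_k^H}{k!}\int_{D(T)}H_k(Y(\mathbf{x}))d\mathbf{x}$. The key point is that, after normalization by $d_T=\mathcal{L}(T)T^{d-\alpha}$, the $k=2$ term dominates: by \eqref{2.7} the variance of $\int_{D(T)}H_k(Y(\mathbf{x}))d\mathbf{x}$ equals $k!\int_{D(T)}\int_{D(T)}B(\Vert\mathbf{x}-\mathbf{y}\Vert)^k\,d\mathbf{x}\,d\mathbf{y}$, which after the scaling $\mathbf{x}=T\mathbf{u}$, $\mathbf{y}=T\mathbf{v}$ is of order $T^{2d-k\alpha}\mathcal{L}(T)^k$ for the integrable range and is $o(d_T^2)$ for every $k\ge3$ because $k\alpha>2\alpha$ (and, in the borderline/non-integrable cases $k\alpha\ge d$, it is $O(T^{2d}\mathcal{L}(T)^k\cdot(\text{lower order}))=o(T^{2(d-\alpha)}\mathcal{L}(T)^2)$ since $d-k\alpha<d-2\alpha$ forces a strictly smaller power of $T$); hence $d_T^{-1}\sum_{k\ge3}\frac{C_k^H}{k!}\int_{D(T)}H_k(Y(\mathbf{x}))d\mathbf{x}\to0$ in $L^2$, hence in probability. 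By Slutsky's theorem, $S_T^H$ has the same limit in distribution as $\frac{C_2^H}{2!}\cdot\frac{1}{d_T}\int_{D(T)}H_2(Y(\mathbf{x}))d\mathbf{x}=\frac{1}{2}C_2^H S_T$, which by part (i) converges to $\frac{1}{2}C_2^H S_\infty$. This completes the plan.
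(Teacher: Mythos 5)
Your proposal follows essentially the same route as the paper's own proof: the Karhunen--Lo\`eve expansion combined with the Fredholm determinant identity of Remark \ref{rem2} to express $E[\exp(\mathrm{i}zS_T)]$ as $\exp\bigl(\tfrac12\sum_{m\ge2}\tfrac{(2\mathrm{i}z)^m}{m}\mathrm{Tr}(R_{Y,D(T)}^m)/d_T^m\bigr)$, the reduction to the trace convergence $\mathrm{Tr}(R_{Y,D(T)}^m)/d_T^m\to\mathrm{Tr}(\mathcal{K}_\alpha^m)$ via the slowly varying property and \textbf{Condition A2}, a dominated-convergence interchange of $\lim_T$ and $\sum_m$ for $|z|$ small followed by analytic continuation, and, for (ii), the Hermite expansion with the $j\ge3$ remainder controlled through the $B^3$ bound split into near- and far-diagonal regions. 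The argument is correct and matches the paper's proof in all essential respects.
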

\begin{remark}
\textrm{Note that {\bfseries Condition A2} is satisfied by the slowly varying function (\ref{svfex}) with $C=1,$ for}
$D=\mathcal{B}_{1}(\mathbf{0})=\{\mathbf{x}: \|\mathbf{x}\|\leq
1\}.$
\end{remark}
\begin{proof}
We first prove (i). Since $EY^{2}(\mathbf{x})=1,$ $$\int_{D
(T)}d\mathbf{x}=\int_{D
(T)}E\left[Y^{2}(\mathbf{x})\right]d\mathbf{x}=E\left[\int_{D(T)}Y^{2}(\mathbf{x})d\mathbf{x}\right]=\sum_{j=1}^{\infty
}\lambda_{j,T}(R_{Y,D(T)})E\eta_{j}^{2}= \sum_{j=1}^{\infty
}\lambda_{j,T}(R_{Y,D(T)}).$$
 \noindent From
Definition \ref{def1},
 Lemma \ref{lemm0}, and Remark \ref{rem2},
 one has
\begin{eqnarray}
\psi_{T}(z)&=& E\left[\exp\left( \frac{\mathrm{i}z}{d_{T}}\int_{D (T)}(Y^{2}(\mathbf{x})-1)d\mathbf{x}\right)\right]\nonumber\\
&=& \exp\left(-\frac{\mathrm{i}z\sum_{j=1}^{\infty }\lambda_{j,T}(R_{Y,D(T)})}{d_{T}}\right)\prod_{j=1}^{\infty
}\left(1-2\mathrm{i}z\frac{\lambda_{j,T}(R_{Y,D(T)})}{d_{T}}\right)^{-1/2}\nonumber\\
&=&  \exp\left(-\frac{\mathrm{i}z\sum_{j=1}^{\infty
}\lambda_{j,T}(R_{Y,D(T)})}{d_{T}}\right)\left[\mathcal{D}_{T}\left(\frac{2\mathrm{i}z}{d_{T}}\right)\right]^{-1/2} \nonumber\\
 &=&\exp\left(-\frac{\mathrm{i}z\sum_{j=1}^{\infty
}\lambda_{j,T}(R_{Y,D(T)})}{d_{T}}\right) \exp\left(\frac{1}{2}
\sum_{m=1}^{\infty
}\frac{1}{m}\left(\frac{2\mathrm{i}z}{d_{T}}\right)^{m}\mbox{Tr}\left(
R_{Y,D (T)}^{m}\right)\right)\nonumber
\end{eqnarray}
\begin{eqnarray}
 &=&
 \exp\left(-\frac{\mathrm{i}z\sum_{j=1}^{\infty }\lambda_{j,T}(R_{Y,D(T)})}{d_{T}}+\frac{\mathrm{i}z\sum_{j=1}^{\infty
}\lambda_{j,T}(R_{Y,D(T)})}{d_{T}}\right. \nonumber\\
& &\left. \hspace*{1cm} +\frac{1}{2} \sum_{m=2}^{\infty
}\frac{1}{m}\left(\frac{2\mathrm{i}z}{d_{T}}\right)^{m}\mbox{Tr}\left(
R_{Y,D (T)}^{m}\right)\right)\nonumber\\
&=&\exp\left(\frac{1}{2} \sum_{m=2}^{\infty }\frac{1}{m}\left(\frac{2\mathrm{i}z}{d_{T}}\right)^{m}\mbox{Tr}\left(
R_{Y,D (T)}^{m}\right)\right).\nonumber\\
\label{chfseqtt}
\end{eqnarray}

Furthermore, under \textbf{Condition A2}, there exists a positive
constant $C$ such that
\begin{eqnarray}
\frac{1}{d_{T}^{2}}\mbox{Tr}\left(
R_{Y,D(T)}^{2}\right)&=&\int_{D}\int_{D}\frac{\mathcal{L}(T\|\mathbf{x}_{1}-\mathbf{x}_{2}\|)}{\mathcal{L}(T)}\frac{\mathcal{L}(T\|\mathbf{x}_{2}-\mathbf{x}_{1}\|)}{\mathcal{L}(T)}
\frac{1}{\|\mathbf{x}_{1}-\mathbf{x}_{2}\|^{2\alpha
}}d\mathbf{x}_{1}d\mathbf{x}_{2}\nonumber\\
&\leq &
C\int_{D}\int_{D}\frac{1}{\|\mathbf{x}_{1}-\mathbf{x}_{2}\|^{2\alpha
}}d\mathbf{x}_{1}d\mathbf{x}_{2}=C \mbox{Tr}\left(
\mathcal{K}_{\alpha }^{2}\right)<\infty
 \label{eqm2}
 \end{eqnarray}
 \begin{eqnarray}
& &\frac{1}{d_{T}^{m}}
 \mbox{Tr}\left( R_{Y,D(T)}^{m}\right)=\nonumber\\ & & =\frac{1}{[\mathcal{L}(T)]^{m}}\int_{D
}\underset{(m)}{\cdots }\int_{D
}\frac{\mathcal{L}(T\|\mathbf{x}_{1}-\mathbf{x}_{2}\|)}{\|\mathbf{x}_{1}-\mathbf{x}_{2}\|^{\alpha
}}\frac{\mathcal{L}(T\|\mathbf{x}_{2}-\mathbf{x}_{3}\|)}{\|\mathbf{x}_{2}-\mathbf{x}_{3}\|^{\alpha
}}\cdots
\frac{\mathcal{L}(T\|\mathbf{x}_{m}-\mathbf{x}_{1}\|)}{\|\mathbf{x}_{m}-\mathbf{x}_{1}\|^{\alpha
}}d\mathbf{x}_{1}\dots
d\mathbf{x}_{m}\nonumber\\
& & \leq C   \int_{D }\underset{(m)}{\cdots }\int_{D
}\frac{1}{\|\mathbf{x}_{1}-\mathbf{x}_{2}\|^{\alpha
}}\frac{1}{\|\mathbf{x}_{2}-\mathbf{x}_{3}\|^{\alpha
}}\cdots \frac{1}{\|\mathbf{x}_{m}-\mathbf{x}_{1}\|^{\alpha }}d\mathbf{x}_{1}\dots d\mathbf{x}_{m} \nonumber\\
& & =C \mbox{Tr}\left( \mathcal{K}_{\alpha }^{m}
\right)<\infty,\quad  m>2,
 \label{chcoeff}
\end{eqnarray}
\noindent since $\| \mathcal{K}_{\alpha }^{m}\|_{1}\leq M\|
\mathcal{K}_{\alpha }^{2}\|_{1},$ for $m>2$ and $M>0.$

 From equations (\ref{chfseqtt})--(\ref{chcoeff}), for every $T>0,$
\begin{eqnarray}
 |\psi_{T}(z)|&=& \left|\exp\left(\frac{1}{2} \sum_{m=2}^{\infty }\frac{(-1)^{m-1}}{2m-2}\left( \frac{2z}{d_{T}}\right)^{2m-2}\mbox{Tr}\left(
R_{Y,D(T)}^{2m-2}\right)\right)\right|\nonumber\\
&\times & \left|\exp\left(\frac{i}{2} \sum_{m=3}^{\infty }
\frac{(-1)^{m}}{2m-3}\left( \frac{2z}{d_{T}}\right)^{2m-3}\mbox{Tr}\left(
R_{Y,D(T)}^{2m-3}\right)\right)\right|\label{ereim}\\
&= & \left|\exp\left(\frac{1}{2} \sum_{m=2}^{\infty
}\frac{(-1)^{m-1}}{2m-2}\left( \frac{2z}{d_{T}}\right)^{2m-2}\mbox{Tr}\left(
R_{Y,D(T)}^{2m-2}\right)\right)\right|\nonumber\\
&\times & \left[\cos^{2} \left(\frac{1}{2}\sum_{m=3}^{\infty }
\frac{(-1)^{m}}{2m-3}\left( \frac{2z}{d_{T}}\right)^{2m-3}\mbox{Tr}\left(
R_{Y,D(T)}^{2m-3}\right)\right)\right.\nonumber\\
& & \left.+\sin^{2}\left(\frac{1}{2}\sum_{m=3}^{\infty }
\frac{(-1)^{m}}{2m-3}\left(
\frac{2z}{d_{T}}\right)^{2m-3}\mbox{Tr}\left(
R_{Y,D(T)}^{2m-3}\right)\right)\right]^{1/2} \nonumber
\\
&=& \left|\exp\left(\frac{-1}{2} \sum_{m=2}^{\infty
}\frac{(-1)^{m}}{2m-2}\left(
\frac{2z}{d_{T}}\right)^{2m-2}\mbox{Tr}\left(
R_{Y,D(T)}^{2m-2}\right)\right)\right|\label{eq0desarrollo}\\
&=& \left|\exp\left(\frac{-1}{2} \sum_{n=1}^{\infty
}\frac{(-1)^{2n}}{4n-2}\left(
\frac{2z}{d_{T}}\right)^{4n-2}\mbox{Tr}\left(
R_{Y,D(T)}^{4n-2}\right)\right)\right|\nonumber
\\
&\times &\left|\exp\left(\frac{-1}{2} \sum_{n=1}^{\infty
}\frac{(-1)^{2n+1}}{4n}\left(
\frac{2z}{d_{T}}\right)^{4n}\mbox{Tr}\left(
R_{Y,D(T)}^{4n}\right)\right)\right|\label{eq1desarrollo}
\end{eqnarray}
\begin{eqnarray}
&=&\left|\exp\left(\frac{-1}{2} \sum_{n=1}^{\infty
}\frac{1}{4n-2}\left( \frac{2z}{d_{T}}\right)^{4n-2}\mbox{Tr}\left(
R_{Y,D(T)}^{4n-2}\right)\right)\right|\nonumber\\
&\times &\left|\exp\left(\frac{1}{2} \sum_{n=1}^{\infty
}\frac{1}{4n}\left( \frac{2z}{d_{T}}\right)^{4n}\mbox{Tr}\left(
R_{Y,D(T)}^{4n}\right)\right)\right|\nonumber
\\
&\leq &\left|\exp\left(\frac{-1}{2} \sum_{n=1}^{\infty
}\frac{1}{4n-2}\left( \frac{2z}{d_{T}}\right)^{4n-2}\mbox{Tr}\left(
R_{Y,D(T)}^{4n-2}\right)\right)\right|\nonumber\\
&\times &\left|\exp\left(\frac{C}{2} \sum_{n=1}^{\infty
}\frac{1}{4n}\left( 2z\right)^{4n}\mbox{Tr}\left(
\mathcal{K}_{\alpha }^{4n}\right)\right)\right|
\nonumber\\
&\leq &\left|\exp\left(\frac{C}{2} \sum_{n=1}^{\infty
}\frac{1}{4n}\left( 2z\right)^{4n}\mbox{Tr}\left(
\mathcal{K}_{\alpha }^{4n}\right)\right)\right|\nonumber\\
&= &\left|\exp\left(\frac{C}{8} \sum_{n=1}^{\infty
}\frac{1}{n}\left( 16z^{4}\right)^{n}\mbox{Tr}\left(
(\mathcal{K}_{\alpha
}^{4})^{n}\right)\right)\right|=\left[\mathcal{D}_{\mathcal{K}_{\alpha
}^{4}}(16z^{4})\right]^{-C/8} <\infty,
 \label{eqdctnn}
 \end{eqnarray}
\noindent where we have applied, inside the argument of the exponential,  the straightforward identities $i^{2m-2}=(i^{2})^{m-1}=(-1)^{m-1},$ $|\exp(iu)|=\cos^{2}(u)+\sin^{2}(u)=1,$ and the fact that
the sequence of natural numbers $m=2,3,4,5,6\dots=\mathbb{N}-\{0,1\}$ can be obtained as the union of the sequences $\{2m-2\}_{m\geq 2}=2,4,6,\dots $ and $\{2m-3\}_{m\geq 3}=3,5,7,\dots .$ Hence, in
the above equation, the sum in $\mathbb{N}-\{0,1\}$ can be splitted into  the sums  $\sum_{m=2}^{\infty }(-1)^{m-1}f(2m-2)$ and $\sum_{m=3}^{\infty }(-1)^{m}f(2m-3).$
 Moreover, in  (\ref{eq1desarrollo}), we consider the sequence $\{2m-2\}_{m\geq 2}=2,4,6,8,10,12,\dots $ as the union of the sequences $\{4n-2\}_{n\geq 1}=2,6,10,\dots,$ and
 $\{4n\}_{n\geq 1}=4,8,12,\dots,$ corresponding to the changes of variable $m=2n$
and $m=2n+1.$ Thus, for $m=2n,$ $2m-2=4n-2,$ and, for $m=2n+1,$
$2m-2=4n+2-2=4n.$ The sum $\sum_{m=2}^{\infty }(-1)^{m}f(2m-2)$ can
then be splitted into the two sums
 $\sum_{n=1}^{\infty }(-1)^{2n}f(4n-2)$ and  $\sum_{n=1}^{\infty }(-1)^{2n+1}f(4n).$
 Furthermore, the last identity in (\ref{eqdctnn}) is obtained
from the Fredholm determinant formula  \begin{equation}\mathcal{D}_{\mathcal{K}_{\alpha
}^{4}}=\mbox{det}(I-\omega \mathcal{K}_{\alpha
}^{4})=\exp\left(-\sum_{k=1}^{\infty }%
\frac{\mbox{Tr}[\mathcal{K}_{\alpha
}^{4}]^{k}}{k}\omega^{k}\right)=\exp\left(-\sum_{k=1}^{\infty
}\sum_{l=1}^{\infty}[\lambda_{l}(\mathcal{K}_{\alpha
}^{4})]^{k}\frac{\omega^{k}}{k}\right)\label{fdfbb}
\end{equation} \noindent of $\mathcal{K}_{\alpha }^{4}$ at point $\omega =16z^{4},$
which is finite for $|\omega|<\frac{1}{\|\mathcal{K}_{\alpha
}^{4}\|_{1}},$ i.e.,  for $|z|<\frac{1}{4\|\mathcal{K}_{\alpha
}^{4}\|_{1}^{1/4}},$ since, from Theorem \ref{WaeK} (see also
equations (\ref{RKD}) and  (\ref{functad})),
\begin{equation}
\mbox{Tr}\left( \mathcal{K}_{\alpha
}^{2}\right)=\int_{D}\int_{D}\frac{1}{\|\mathbf{x}-\mathbf{y}\|^{2\alpha
}}d\mathbf{x}d\mathbf{y}=\frac{[a_{d}(D)]^{2}}{2}<\infty.
\label{trk}
\end{equation}
\noindent Hence, $\|\mathcal{K}_{\alpha }^{4}\|_{1}\leq
\widetilde{M}\|\mathcal{K}_{\alpha
}^{2}\|_{1}=\widetilde{M}\mbox{Tr}\left(\mathcal{K}_{\alpha
}^{2}\right)<\infty ,$ for certain $\widetilde{M}>0.$

From (\ref{eqdctnn}), there exists $\widetilde{\psi}
(z)=\lim_{T\rightarrow \infty }|\psi_{T}(z)|<\infty,$ for $0<z <
\left[1/16\|\mathcal{K}_{\alpha }^{4}\|_{1}\right]^{1/4}.$ An
analytic continuation argument (see \cite{Lukacs}, Th. 7.1.1)
guarantees that $\widetilde{\psi} $ defines the unique limit
characteristic function for all real values of $z.$

From (\ref{eqdctnn}), we now prove that
$\widetilde{\psi}(z)=\psi(z),$ with $\psi(z)$ given in
 (\ref{chf})--(\ref{eqcoefchfbcc}).

Consider, for each $z\in \mathbb{R},$ the sequence
\begin{equation}\left\{
\frac{1}{m}\left(\frac{2iz}{d_{T}}\right)^{m}\mbox{Tr}\left(
R_{Y,D(T)}^{m}\right)\right\}_{m\geq 2},\label{split}
\end{equation}

\noindent involved in the series appearing in the argument of the
complex exponential defining  $\psi_{T}(z),$ in equation
(\ref{chfseqtt}). Keeping in mind that $(i)^{2m}=(-1)^{m},$ and that
$(i)^{2m+1}=(-1)^{m}i,$ let us split (\ref{split}) into the
subsequences:
\begin{eqnarray}
\{P_{m}\}_{m\geq 1}&=&
\left\{\frac{(-1)^{m}}{2m}\left(\frac{2z}{d_{T}}\right)^{2m}\mbox{Tr}\left(
R_{Y,D(T)}^{2m}\right)\right\}_{m\geq 1}\label{s1}\\
\{Q_{m}\}_{m\geq
1}&=&\left\{\frac{(-1)^{m}i}{2m+1}\left(\frac{2z}{d_{T}}\right)^{2m+1}\mbox{Tr}\left(
R_{Y,D(T)}^{2m+1}\right)\right\}_{m\geq 1}. \label{s2}
\end{eqnarray}
\noindent  Under \textbf{Condition A2}, the following two
inequalities hold:
\begin{eqnarray}
\left|\frac{(-1)^{m}}{2m}\left(\frac{2z}{d_{T}}\right)^{2m}\mbox{Tr}\left(
R_{Y,D(T)}^{2m}\right)\right|&\leq &
C\frac{1}{2m}\left(2|z|\right)^{2m}\mbox{Tr}\left(
\mathcal{K}_{\alpha }^{2m}\right)\label{fi}\\
\left|\frac{(-1)^{m}i}{2m+1}\left(\frac{2z}{d_{T}}\right)^{2m+1}\mbox{Tr}\left(
R_{Y,D(T)}^{2m+1}\right)\right|&\leq &
C\frac{1}{2m+1}(2|z|)^{2m+1}\mbox{Tr}\left( \mathcal{K}_{\alpha
}^{2m+1}\right). \label{fi2}
\end{eqnarray}

Moreover, for  $|z|<1/2,$
\begin{eqnarray}
\sum_{m=1}^{\infty}C\frac{1}{2m}\left(2|z|\right)^{2m}\mbox{Tr}\left(
\mathcal{K}_{\alpha }^{2m}\right)&\leq &
\sum_{m=1}^{\infty}\frac{C}{m}\left(2|z|\right)^{m}\mbox{Tr}\left(
\mathcal{K}_{\alpha }^{2m}\right) \nonumber\\
&=&\ln\left(\left[\mathcal{D}_{\mathcal{K}_{\alpha
}^{2}}(2|z|)\right]^{-C}\right)<\infty\label{fub}
\\
\sum_{m=1}^{\infty}\frac{C}{2m+1}(2|z|)^{2m+1}\mbox{Tr}\left(
\mathcal{K}_{\alpha }^{2m+1}\right)&\leq &
\sum_{m=1}^{\infty}\frac{C}{m}(2|z|)^{m}M\mbox{Tr}\left(
\mathcal{K}_{\alpha }^{2m}\right)
\nonumber\\
&=&\ln\left(\left[\mathcal{D}_{\mathcal{K}_{\alpha
}^{2}}(2|z|)\right]^{-CM}\right)<\infty, \label{fi3}
\end{eqnarray}
\noindent considering $|z|<\frac{1}{2}\wedge
\frac{1}{2\mbox{Tr}(\mathcal{K}_{\alpha }^{2})},$ where Definition
\ref{def1} of the Fredholm determinant of $\mathcal{K}_{\alpha
}^{2}$ has been applied, since, as commented before,
$\mathcal{K}_{\alpha }^{2}$ is in the trace class from Theorem
\ref{WaeK}. Here, $M$ is a positive constant such that, for every
$m\geq 1,$  $\mbox{Tr}\left( \mathcal{K}_{\alpha }^{2m+1}\right)\leq
M\mbox{Tr}\left( \mathcal{K}_{\alpha }^{2m}\right),$ in the case
where $\|\mathcal{K}_{\alpha }^{2}\|_{\mathcal{L}(L^{2}(D))}\leq 1.$
Otherwise, \begin{eqnarray} \mbox{Tr}\left( \mathcal{K}_{\alpha
}^{2m+1}\right)&\leq & M^{\prime }\mbox{Tr}\left(
\mathcal{K}_{\alpha }^{2m+2}\right)\nonumber\\
& \leq & M^{\prime }\left[\sup_{k\geq
1}\lambda_{k}(\mathcal{K}_{\alpha }^{2})\right]^{2} \mbox{Tr}\left(
\mathcal{K}_{\alpha }^{2m}\right).\nonumber \end{eqnarray} \noindent
Hence $M=M^{\prime}\left[\sup_{k\geq
1}\lambda_{k}(\mathcal{K}_{\alpha }^{2})\right]^{2}.$

Inequalities (\ref{fub}) and (\ref{fi3}) allow us to apply Dominated
Convergence Theorem for integration with respect to a counting
measure obtaining, for $|z|<\frac{1}{2}\wedge
\frac{1}{2\mbox{Tr}(\mathcal{K}_{\alpha }^{2})},$
\begin{eqnarray}
\lim_{T\rightarrow
\infty}\sum_{m=1}^{\infty}\frac{(-1)^{m}}{2m}\left(\frac{2z}{d_{T}}\right)^{2m}\mbox{Tr}\left(
R_{Y,D(T)}^{2m}\right)&=&\sum_{m=1}^{\infty}\lim_{T\rightarrow
\infty}\frac{(-1)^{m}}{2m}\left(\frac{2z}{d_{T}}\right)^{2m}\mbox{Tr}\left(
R_{Y,D(T)}^{2m}\right) \nonumber\\
&=&\sum_{m=1}^{\infty}\frac{(-1)^{m}}{2m}\left(2z\right)^{2m}\mbox{Tr}\left(
\mathcal{K}_{\alpha }^{2m}\right)\label{lim1}
\\
\lim_{T\rightarrow
\infty}\sum_{m=1}^{\infty}\frac{(-1)^{m}i}{2m+1}\left(\frac{2z}{d_{T}}\right)^{2m+1}\mbox{Tr}\left(
R_{Y,D(T)}^{2m+1}\right)&=&\sum_{m=1}^{\infty}\lim_{T\rightarrow
\infty}\frac{(-1)^{m}i}{2m+1}\left(\frac{2z}{d_{T}}\right)^{2m+1}\mbox{Tr}\left(
R_{Y,D(T)}^{2m+1}\right)\nonumber\\
&=&\sum_{m=1}^{\infty}\frac{(-1)^{m}i}{2m+1}\left(2z\right)^{2m+1}\mbox{Tr}\left(
\mathcal{K}_{\alpha }^{2m+1}\right)\label{lim2},
\end{eqnarray}
\noindent where we have considered that,  from (\ref{eq2b}), for
every $m\geq 2,$
\begin{equation} \lim_{T\rightarrow \infty}\frac{\mbox{Tr}\left(
R_{Y,D (T)}^{m}\right)}{d_{T}^{m}}=\mbox{Tr}\left(
\mathcal{K}_{\alpha }^{m} \right),\label{linpoinwise}
\end{equation}
\noindent which is finite from Theorem \ref{WaeK}, that ensures the
trace property of $\mathcal{K}_{\alpha }^{2}.$ Hence, for every
$m\geq 2,$ and $z,$ \begin{equation} \lim_{T\rightarrow
\infty}\frac{1}{m}\left(\frac{2\mathrm{i}z}{d_{T}}\right)^{m}\mbox{Tr}\left(
R_{Y,D
(T)}^{m}\right)=\frac{1}{m}\left(2\mathrm{i}z\right)^{m}\mbox{Tr}\left(
\mathcal{K}_{\alpha }^{m}
\right)=\frac{1}{m}\left(2\mathrm{i}z\right)^{m}c_{m},\label{linpoinwise2}
\end{equation}
\noindent with $c_{m}$ being given in equation (\ref{eqcoefchfbcc}).

From equations (\ref{chfseqtt}), (\ref{s1}), (\ref{s2}),
(\ref{lim1}),   (\ref{lim2}) and (\ref{linpoinwise2}), we obtain,
for $|z|<\frac{1}{2}\wedge \frac{1}{2\mbox{Tr}(\mathcal{K}_{\alpha
}^{2})},$

\begin{eqnarray}
\lim_{T\rightarrow \infty}\psi_{T}(z) &=& \lim_{T\rightarrow
\infty}\exp\left(\frac{1}{2} \sum_{m=2}^{\infty
}\frac{1}{m}\left(\frac{2\mathrm{i}z}{d_{T}}\right)^{m}\mbox{Tr}\left(
R_{Y,D (T)}^{m}\right)\right)\nonumber\\
&=& \lim_{T\rightarrow \infty}\exp\left(\frac{1}{2}
\sum_{m=1}^{\infty}P_{m}\right)\exp\left(\frac{1}{2}
\sum_{m=1}^{\infty}Q_{m}\right)\nonumber\\
&=&\exp\left(\frac{1}{2} \sum_{m=1}^{\infty}\lim_{T\rightarrow
\infty}P_{m}\right)\exp\left(\frac{1}{2} \sum_{m=1}^{\infty}
\lim_{T\rightarrow
\infty}Q_{m}\right)\nonumber\\
&=&\exp\left(\frac{1}{2}
\sum_{m=2}^{\infty}\frac{1}{m}\left(2\mathrm{i}z\right)^{m}\mbox{Tr}\left(\mathcal{K}_{\alpha
}^{m}\right)\right)=\psi (z).
 \label{limdef}
\end{eqnarray}
\noindent An analytic continuation argument (see \cite{Lukacs}, Th.
7.1.1) guarantees that $\psi $ defines the unique limit
characteristic function for all real values of $z.$

\bigskip

We now turn to the proof of (ii). Under  {\bfseries Condition \textbf{A1}}, since $B(\left\Vert \mathbf{x}\right\Vert )\leq 1,$ and $%
B(0)=1,$ we have
\[
B^{j}(\left\Vert \mathbf{x}\right\Vert )\leq B^{3}(\left\Vert
\mathbf{x}\right\Vert ),\quad j\geq 3.
\]%
Hence,
\[ K_T=
 \left[ \frac{1}{\mathcal{L}^{2}(T)T^{2d-2\alpha }}\right]E\left[\left(\int_{D
(T)}G(Y(\mathbf{x}))~d\mathbf{x}-C_{0}^{H}T^{d}\left\vert D
\right\vert -\frac{C_{2}^{H}}{2}\int_{D (T)}H_{2}(Y
(\mathbf{x}))~d\mathbf{x}\right)\right] ^{2}
\]%
\[
= \left[ \frac{1}{\mathcal{L}^{2}(T)T^{2d-2\alpha }}\right]
\sum_{j=3}^{\infty }\frac{(C_{j}^{H})^{2}}{j!}\int_{D (T)}\int_{D
(T)}B^{j}(\left\Vert \mathbf{x}-\mathbf{y}\right\Vert
)d\mathbf{x}d\mathbf{y}\leq
\]%
\begin{equation}
\leq \left[ \frac{1}{\mathcal{L}^{2}(T)T^{2d-2\alpha }}\right]
\int_{D (T)}\int_{D (T)}B^{3}(\left\Vert
\mathbf{x}-\mathbf{y}\right\Vert
)d\mathbf{x}d\mathbf{y}\left[\sum_{j=3}^{\infty
}\frac{(C_{j}^{H})^{2}}{j!}\right]. \label{e:KT}
\end{equation}
By {\bfseries Condition} \textbf{A1}, for any $\varepsilon >0,$ there exists $A_{0}>0,$ such that for$\left\Vert
\mathbf{x}-\mathbf{y}\right\Vert
>A_{0},$
$ B(\left\Vert \mathbf{x}-\mathbf{y}\right\Vert )<\varepsilon.$ Let
$\mathcal{D}_{1}=\{(\mathbf{x},\mathbf{y})\in D (T)\times
D(T):\left\Vert \mathbf{x}-\mathbf{y}\right\Vert \leq A_{0})\},\
\mathcal{D}_{2}=\{(\mathbf{x},\mathbf{y})\in D (T)\times
D(T):\left\Vert \mathbf{x}-\mathbf{y}\right\Vert
>A_{0})\},$

\begin{equation}
\int_{D (T)}\int_{D (T)}B^{3}(\left\Vert
\mathbf{x}-\mathbf{y}\right\Vert )d\mathbf{x}d\mathbf{y} =
\left\{\int \int_{\mathcal{D}_{1}}+\int
\int_{\mathcal{D}_{2}}\right\}B^{3}(\left\Vert
\mathbf{x}-\mathbf{y}\right\Vert )d\mathbf{x}d\mathbf{y} =
S_{T}^{(1)}+S_{T}^{(2)}.\label{eqids}
\end{equation}

Using the bound $B^{3}(\left\Vert \mathbf{x}-\mathbf{y}\right\Vert
)\leq 1$ on $\mathcal{D}_{1},$ and the bound $B^{3}(\left\Vert
\mathbf{x}-\mathbf{y}\right\Vert )<\epsilon B^{2}(\left\Vert
\mathbf{x}-\mathbf{y}\right\Vert )$ on $\mathcal{D}_{2},$ we obtain,
$$
\left| S_{T}^{(1)}\right| \leq \int \int_{\mathcal{D}_{1}}
 \left|B^{3}(\left\Vert  \mathbf{x}-\mathbf{y}\right\Vert
)\right|d\mathbf{x}d\mathbf{y} \leq M_{1} T^{d}$$ \noindent for a
suitable constant $M_{1}>0,$ and  for $T$ sufficiently large, under
\textbf{A2},

\begin{eqnarray}
\left\vert S_{T}^{(2)}\right\vert &\leq &\int
\int_{\mathcal{D}_{2}}\left\vert B^{3}(\left\Vert
\mathbf{x}-\mathbf{y}\right\Vert )\right\vert d\mathbf{x}d\mathbf{y}
\leq \epsilon  \int \int_{\mathcal{D}_{2}}B^{2}(\left\Vert
\mathbf{x}-\mathbf{y}\right\Vert )d\mathbf{x}d\mathbf{y} \nonumber\\
&\leq &\epsilon  \int_{D(T)}
\int_{D(T)}B^{2}(\|\mathbf{x}-\mathbf{y}\|)d\mathbf{x}d\mathbf{y}
=\epsilon \int_{D(T)}
\int_{D(T)}\frac{\mathcal{L}^{2}(\|\mathbf{x}-\mathbf{y}\|)}{\|\mathbf{x}-\mathbf{y}\|^{2\alpha
}}d\mathbf{x}d\mathbf{y}\nonumber\\
&=& \epsilon \mathcal{L}^{2}(T)T^{2d-2\alpha}\int_{D(1)}\int_{D(1)}
\frac{\mathcal{L}^{2}(T\|\mathbf{x}-\mathbf{y}\|)}{\|\mathbf{x}-\mathbf{y}\|^{2\alpha
}\mathcal{L}^{2}(T)}d\mathbf{x}d\mathbf{y} \nonumber\\ &\leq &
 \epsilon C\mathcal{L}^{2}(T)T^{2d-2\alpha}\int_{D(1)}\int_{D(1)}\frac{d\mathbf{x}d\mathbf{y}}{\|\mathbf{x}-\mathbf{y}\|^{2\alpha
}}<\infty, \quad 0<\alpha<d/2.\label{c2}
\end{eqnarray}
\noindent Thus, for
$$M_{2}=C\int_{D(1)}\int_{D(1)}\frac{d\mathbf{x}d\mathbf{y}}{\|\mathbf{x}-\mathbf{y}\|^{2\alpha
}},$$ \noindent  from (\ref{e:KT})--(\ref{c2}), we have
\begin{eqnarray}
K_{T} &\le&\left[ \frac{1}{\mathcal{L}(T)T^{d-\alpha }}\right]
^{2}\left[\sum_{j=3}^{\infty
}\frac{(C_{j}^{H})^{2}}{j!}\right]\int_{D (T)}\int_{D
(T)}B^{3}(\left\Vert \mathbf{x}-\mathbf{y}\right\Vert
)d\mathbf{x}d\mathbf{y}\nonumber\\
&\leq &
(M_{1}\vee M_{2})
\left[ \frac{T^{d}}{\mathcal{L}^{2}(T)T^{2d-2\alpha }}+\epsilon \frac{%
T^{2d-2\alpha }\mathcal{L}^{2}(T)}{\mathcal{L}^{2}(T)T^{2d-2\alpha
}}\right] \label{sidd}
 \end{eqnarray}
 \noindent is  arbitrarily small together with $\epsilon >0$ as $T\rightarrow \infty$.  The
desired result on weak-convergence then follows. 
\end{proof}
\begin{remark}
Note that the proof of Theorem \ref{pr1}(i) is slightly different
from the proof of Theorem 2 in \cite{RuizMedina}, for the case of
non-linear functionals of chi-squared random fields. Specifically,
we use here similar arguments to those ones considered in that
paper, to derive the existence of a limit. However, to obtain the
explicit limit characteristic function, we apply Dominated
Convergence Theorem in a different way, considering  equations
(\ref{split})--(\ref{s2}).
\end{remark}
\begin{remark}
\textrm{{Consider the case of $d=1$ and discrete time. That is, let $%
\{Y(t),\ t\in \mathbb{Z}\}$ be a stationary zero-mean Gaussian sequence with
unit variance and covariance function of the form
\begin{equation*}
B(t)=\frac{\mathcal{L}(t)}{|t|^{\alpha }},
\end{equation*}
\noindent for $0<\alpha <1/2.$ The proof of the weak convergence result in
\cite{Rosenblatt} and \cite{Taqqu75} is based on the following formula for the
characteristic function of a quadratic form of strong-correlated Gaussian
random variables:
\begin{eqnarray}
& & E\left[\exp\left\{\mathrm{i}z\frac{1}{d_{T}}\sum_{t=0}^{T-1}(Y^{2}(t)-1)\right\}%
\right]=\exp\left\{-\mathrm{i}zTd_{T}^{-1}\right\} \left[\mbox{det}%
\left(I_{T}-2\mathrm{i}zd_{T}^{-1}R_{T}\right)\right]^{-1/2}  \notag \\
& & =\exp\left\{ \sum_{k=2}^{\infty }(2\mathrm{i}zd_{T}^{-1})^{k}\frac{\mbox{Sp}%
R_{T}^{k}}{k}\right\},  \label{eqchwl}
\end{eqnarray}
\noindent where
\begin{equation}
\frac{1}{d_{T}^{k}}\mbox{Sp}R_{T}^{k}=\frac{1}{d_{T}^{k}}%
\sum_{i_{1}=0}^{T-1}\dots
\sum_{i_{k}=0}^{T-1}B(|i_{1}-i_{2}|)B(|i_{2}-i_{3}|)\dots B(|i_{k}-i_{1}|),
\label{eqcoef}
\end{equation}
}}

\noindent\textrm{with $d_{T}=T^{1-\alpha }\mathcal{L}(T),$ $R_{T}=E[Y\bar{Y}%
^{\prime }],$ $Y=(Y(0),\dots,Y(T-1))^{\prime },$ $\mbox{Sp}R_{T}$ denoting
the trace of the matrix $R_{T},$ and $I_{T}$ representing the identity
matrix of size $T$ (see p. 39 of  \cite{Mathai}). One
can get a direct extension of formulae (\ref{eqchwl}) and (\ref{eqcoef}) to
the stationary zero-mean Gaussian random process case in continuous time $%
\{Y(t),\ t\in \mathbb{R}\}$ (see  \cite{LeonenkoTaufer06}), but for
$d\geq 2 $ direct extensions of (\ref{eqchwl}) and (\ref{eqcoef})
are not available. The present paper addresses this problem by
applying alternative functional tools, like the Karhunen-Lo\`eve
expansion and Fredholm determinant formula, to overcome this
difficulty of discretization of the multidimensional parameter
space. Note that the Fredholm determinant formula appears in the
definition of the characteristic functional of  quadratic forms
defined in terms  of  Hilbert-valued zero-mean Gaussian random
variables (see, for example, Proposition 1.2.8 in \cite{Da Prato}).}
\end{remark}

\begin{remark}
\textrm{Expanding around zero the characteristic function
(\ref{chf}), we obtain the cumulants of random variable $S_{\infty
},$ that is, $\kappa_{1}=0,$ and}
\begin{equation}
\kappa_{k}=2^{k-1} (k-1)! c_{k},\quad k\geq 2,  \label{ecumulant}
\end{equation}
\noindent \textrm{where $c_{k}$ are defined as in equation
(\ref{eqcoefchfbcc}).  The derivation of explicit expressions for
$c_{k}$ would lead to the computation of the moments or cumulants of
the limit distribution. This aspect will constitute the subject of a
subsequent paper.}
\end{remark}

\section{Infinite series representation and eigenvalues}

\label{proprd} The representation of the Rosenblatt-type
distribution  as the sum of an infinite series of weighted
independent chi-squared random variables is derived in this section.
As in the classical case (see Proposition 2 in \cite{Dobrushin}), this series expansion is obtained from the double
Wiener-It{\^{o}} stochastic integral representation of $S_{\infty }$
in the spectral domain (see Theorem \ref{par}). Proposition
\ref{twooperators} and Corollary \ref{lfes} below establish the
connection between the eigenvalues of operator $\mathcal{K}_{\alpha
}$ in (\ref{RKD}) and the weights appearing in the series
representation derived.

\medskip

\noindent The following condition will be required for the
derivation of  Theorem \ref{par}(ii) below.

\medskip

\noindent {\bfseries Condition A3.} Suppose that {\bfseries Condition A1} holds, and there exists a spectral density $f_{0}(\|\boldsymbol{\lambda }\|),$
$\boldsymbol{\lambda }\in \mathbb{R}^{d},$ being decreasing function for $\|\boldsymbol{\lambda }\|\in (0,\varepsilon ],$   with $\varepsilon >0.$

\medskip

If {\bfseries Condition A3} holds,   from equation (\ref{cov}),
applying a Tauberian Theorem (see \cite{Doukhan},
and Theorems 4 and 11 in \cite{LeonenkoOlenko14}),
 \begin{equation}f_{0}(\|\boldsymbol{\lambda }\|)\sim c(d,\alpha)\mathcal{L}\left( \frac{1}{\|\boldsymbol{\lambda }\|}\right)\|\boldsymbol{\lambda }\|^{\alpha -d},\quad 0<\alpha <d,
 \quad  \|\boldsymbol{\lambda }\|\rightarrow 0.\label{asympsd}
\end{equation} Here,
 $c(d,\alpha )= \frac{\Gamma \left(\frac{d-\alpha }{2}\right)}{2^{\alpha }\pi^{d/2}\Gamma \left(\frac{\alpha }{2}\right)}$
 is defined in (\ref{eRc}).

 {\bfseries Condition A3}  holds, in particular,  for the correlation function (\ref{eqcorrfunctex}), with the isotropic spectral density
\begin{equation}f_{0}(\|\boldsymbol{\lambda }\|)=\frac{\|\boldsymbol{\lambda }\|^{1-\frac{d}{2}}}{2^{\frac{d}{2}-1}\pi^{\frac{d}{2}+1}}\int_{0}^{\infty}
K_{\frac{d}{2}-1}(\|\boldsymbol{\lambda}\|u)\frac{\sin\left(\gamma \ \mbox{arg}\left(1+u^{\beta }\exp\left(\frac{i\pi \beta }{2}\right)\right)\right)}{\left|1+u^{\beta }\exp\left(\frac{i\pi \beta }{2}\right)\right|^{\gamma }}u^{\frac{d}{2}}du,\label{asymp2ex}
\end{equation}
\noindent where $K_{\nu }(z)$ is the modified Bessel function of the
second kind. By Corollary 3.10 in \cite{Teo}, the spectral
density (\ref{asymp2ex}) satisfies (\ref{asympsd}), with $\alpha
=\beta \gamma <d.$

The zero-mean Gaussian random field $Y$ with an absolutely
continuous spectrum has the isonormal representation
\begin{equation}
Y(\mathbf{x})=\int_{\mathbb{R}^{d}}\exp\left(i\left\langle \boldsymbol{\lambda },\mathbf{x}\right\rangle \right)\sqrt{f_{0}(\|\boldsymbol{\lambda }\|)}
Z(d\boldsymbol{\lambda }),\label{isorep}
\end{equation}
\noindent where $Z$ is a  complex white noise Gaussian random
measure with Lebesgue control measure.

\bigskip

\begin{theorem}
\label{par} Let $D$ be a regular bounded open  domain.

\begin{itemize}
\item[(i)] For $0<\alpha <d/2,$ the following identities hold:
\begin{eqnarray}
\int_{\mathbb{R}^{2d}}\left\vert K\left( \boldsymbol{\lambda }_{1}+%
\boldsymbol{\lambda }_{2},D\right) \right\vert ^{2}\frac{d\boldsymbol{%
\lambda }_{1}d\boldsymbol{\lambda }_{2}}{\left( \left\Vert \boldsymbol{%
\lambda }_{1}\right\Vert \left\Vert \boldsymbol{\lambda }_{2}\right\Vert \right) ^{d-\alpha
}}&=&\left[\frac{a_{d}\gamma(\alpha ) }{\sqrt{2}|D|}\right]^{2} = \frac{[\gamma(\alpha )]^{2}Tr(\mathcal{K}_{\alpha
}^{2})}{|D|^{2}}<\infty ,\nonumber\\
\label{6.7.1}
\end{eqnarray}%
\noindent where   $a_{d}$ is defined in (\ref{functad}), $\gamma
(\alpha )$ is introduced in equation (\ref{eRc}),
 and $K$ is the characteristic function
of the uniform distribution over set $D,$ given by
\begin{equation}
K\left( \boldsymbol{\lambda },D\right)
=\int_{D}e^{\mathrm{i}\left\langle \boldsymbol{\lambda
},\mathbf{x}\right\rangle }p_{D}\left( \mathbf{x}\right)
d\mathbf{x}=\frac{1}{\left\vert D\right\vert }\int_{D}e^{\mathrm{i}%
\left\langle \boldsymbol{\lambda },\mathbf{x}\right\rangle }d\mathbf{x}=%
\frac{\vartheta (\boldsymbol{\lambda })}{\left\vert D\right\vert },
\label{UD1}
\end{equation}%
\noindent with associated probability density function $p_{D}\left( \mathbf{x%
}\right) =1/\left\vert D\right\vert $ if $\mathbf{x}\in D,$ and $0$
otherwise.

\item[(ii)] Assume that {\bfseries Conditions A1, A2, A3} hold. Then, the random
variable $S_{\infty}$ admits the following double Wiener-It{\^o} stochastic
integral representation:
\begin{eqnarray}
S_{\infty} &=& |D|c\left( d,\alpha \right)
\int_{\mathbb{R}^{2d}}^{\prime
\prime } H(\boldsymbol{\lambda }_{1},\boldsymbol{\lambda }_{2}) \frac{%
Z\left( d\boldsymbol{\lambda } _{1}\right)Z\left( d\boldsymbol{\lambda }
_{2}\right) }{\left\Vert \boldsymbol{\lambda }_{1}\right\Vert ^{\frac{%
d-\alpha }{2}}\left\Vert \boldsymbol{\lambda }_{2}\right\Vert ^{\frac{%
d-\alpha }{2}}},  \label{6.7.3}
\end{eqnarray}
\noindent where $Z$ is a complex   white noise  Gaussian measure with Lebesgue control measure, and the notation $%
\int_{\mathbb{R}^{2d}}^{\prime \prime }$ means that one does not
integrate
on the hyperdiagonals $\boldsymbol{\lambda }_{1}=\pm \boldsymbol{\lambda }%
_{2}.$ Here, the kernel $H$ is given by:
\begin{equation}
H\left( \boldsymbol{\lambda }_{1} ,\boldsymbol{\lambda }_{2}\right)=K\left( \boldsymbol{\lambda }
_{1}+\boldsymbol{\lambda } _{2},D \right),  \label{eqH}
\end{equation}
\noindent and $c\left( d,\alpha \right) =\frac{\Gamma \left(\frac{d-\alpha}{2}\right)}{\pi^{d/2}2^{\alpha }\Gamma
(\alpha/2)}=\frac{1}{\gamma (\alpha )}.$

\end{itemize}
\end{theorem}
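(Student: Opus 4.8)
\textbf{Proof proposal for Theorem \ref{par}.}

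For part (i), the plan is to combine the Fourier-analytic identity from Lemma \ref{l1s117} with the variance computation already recorded in equations (\ref{eqsigma})--(\ref{functad}) and (\ref{trk}). First I would write the left-hand side of (\ref{6.7.1}) by recognizing that, for fixed $\boldsymbol{\lambda}_2$, integration in $\boldsymbol{\lambda}_1$ of $|K(\boldsymbol{\lambda}_1+\boldsymbol{\lambda}_2,D)|^2 \|\boldsymbol{\lambda}_1\|^{-(d-\alpha)}$ is a convolution of $|K(\cdot,D)|^2$ with the Riesz kernel $\|\cdot\|^{-(d-\alpha)}$ evaluated via (\ref{fist117}) with $\beta = d-\alpha$. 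Iterating this — i.e.\ applying (\ref{cfsf}) / (\ref{cfsf2}) with $\beta = d-\alpha$, so that $2\beta = 2(d-\alpha)$ plays the role of the exponent, but here the relevant exponent after both integrations is $2\alpha$ — transforms the double spectral integral into a double \emph{spatial} integral of the product of two indicator functions of $D$ against $\|\mathbf{x}-\mathbf{y}\|^{-2\alpha}$, up to the constant $\gamma(\alpha)^2/|D|^2$ coming from (\ref{UD1}) and the normalization $\gamma$ in (\ref{eRc}). That spatial integral is exactly $[a_d(D)]^2/2 = \mathrm{Tr}(\mathcal{K}_\alpha^2)$ by (\ref{functad}) and (\ref{trk}), giving both claimed equalities; finiteness is the content of (\ref{trk}) and Theorem \ref{WaeK}. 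The bookkeeping obstacle here is matching the Riesz normalization constants: one must be careful that $(1/\|\cdot\|^\alpha)$ is the kernel of $(-\Delta)^{(\alpha-d)/2}$ as noted after (\ref{eRc}), and that applying it twice produces the $(-\Delta)^{\alpha-d/2}$ whose kernel is $\gamma(d-2\alpha)^{-1}\|\cdot\|^{-2\alpha}$ as in (\ref{eqrkdef}); I would do this constant-tracking explicitly since it fixes the $\gamma(\alpha)^2$ factor.

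For part (ii), the strategy is the standard one for Rosenblatt-type limits: produce a candidate double Wiener-It\^o integral, check it is a well-defined element of the second Wiener chaos using part (i), and verify that its characteristic function coincides with $\psi(z)$ from (\ref{chf}). Using the isonormal representation (\ref{isorep}) of $Y$ together with the spectral asymptotics (\ref{asympsd}) under Condition A3, I would first argue that the pre-limit object $\frac{1}{d_T}\int_{D(T)} H_2(Y(\mathbf{x}))\,d\mathbf{x}$ has a second-chaos spectral representation whose kernel, after the change of variables $\boldsymbol\lambda_i \mapsto \boldsymbol\lambda_i/T$ and using $f_0(\|\boldsymbol\lambda\|/T)\sim c(d,\alpha)\mathcal{L}(T/\|\boldsymbol\lambda\|)\|\boldsymbol\lambda\|^{\alpha-d}T^{d-\alpha}$, converges to $|D|c(d,\alpha)\,K(\boldsymbol\lambda_1+\boldsymbol\lambda_2,D)\,\|\boldsymbol\lambda_1\|^{-(d-\alpha)/2}\|\boldsymbol\lambda_2\|^{-(d-\alpha)/2}$; the normalization $\vartheta(\boldsymbol\lambda)/|D| = K(\boldsymbol\lambda,D)$ from (\ref{UD1}) is where the factor $|D|$ enters. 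Part (i) guarantees this limit kernel lies in $L^2(\mathbb{R}^{2d})$ (it is symmetric and the hyperdiagonals carry no mass for a nonatomic control measure), so the right-hand side of (\ref{6.7.3}) defines a genuine second-chaos random variable.

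To finish, I would identify the distribution. Since $S_\infty$ was defined in Theorem \ref{pr1} via its characteristic function (\ref{chf}) with $c_m = \mathrm{Tr}(\mathcal{K}_\alpha^m)$, it suffices to show the double integral in (\ref{6.7.3}) has exactly these cumulants. A double Wiener-It\^o integral with symmetric kernel $h\in L^2(\mathbb{R}^{2d})$ equals $\sum_j \mu_j(\eta_j^2-1)$ in law, where $\mu_j$ are the eigenvalues of the Hilbert--Schmidt operator with kernel $h$, and its $m$-th cumulant is $2^{m-1}(m-1)!\sum_j \mu_j^m = 2^{m-1}(m-1)!\,\mathrm{Tr}$ of that operator's $m$-th power; I would compute that this operator's kernel is (a constant times) $\|\boldsymbol\lambda_1\|^{-(d-\alpha)/2}K(\boldsymbol\lambda_1+\boldsymbol\lambda_2,D)\|\boldsymbol\lambda_2\|^{-(d-\alpha)/2}$, whose $m$-fold composition, when traced, reduces — by $m$-fold iteration of the Riesz/Plancherel identity of part (i) and Lemma \ref{l1s117} — to $\int_{D^m}\prod_{i}\|\mathbf{x}_i-\mathbf{x}_{i+1}\|^{-\alpha}\,d\mathbf{x}_1\cdots d\mathbf{x}_m = \mathrm{Tr}(\mathcal{K}_\alpha^m) = c_m$, the constants $c(d,\alpha)$, $\gamma(\alpha)$, $|D|$ cancelling by design. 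Matching (\ref{ecumulant}) then forces equality in distribution with $S_\infty$. The main obstacle is the $m$-fold trace computation: one must justify interchanging the spectral integrals and the compositions, and track that every appearance of $c(d,\alpha)^m$, $|D|^{2}$ per factor, and the $\gamma$ constants from each Riesz step collapses to exactly the clean spatial integral — this is where the constant $c(d,\alpha)=1/\gamma(\alpha)$ in the statement is pinned down, and it is worth doing the $m=2$ case in full (which is precisely part (i)) and then indicating the induction.
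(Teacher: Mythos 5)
Your plan for part (i) is essentially the paper's own argument: both reduce the double spectral integral to $\int_D\int_D\|\mathbf{x}-\mathbf{y}\|^{-2\alpha}d\mathbf{x}\,d\mathbf{y}=[a_d(D)]^2/2=\mathrm{Tr}(\mathcal{K}_\alpha^2)$ by composing two Riesz kernels (equation (\ref{cfsf2})) and then applying the Plancherel-type identity (\ref{eqrkdef}) to $1_D$, with the cancellation $\gamma(2\alpha)\gamma(d-2\alpha)=(2\pi)^d$ producing the $[\gamma(\alpha)]^2$ factor; the only caution is that the composition step uses (\ref{cfsf2}) with the parameter equal to $\alpha$ (so that two copies of $\|\cdot\|^{-(d-\alpha)}$ convolve to $\frac{[\gamma(\alpha)]^2}{\gamma(2\alpha)}\|\cdot\|^{-(d-2\alpha)}$), not $d-\alpha$ as you wrote — but since you flag the constant-tracking as the point to be done explicitly, this is a bookkeeping slip rather than a gap.

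For part (ii) you begin as the paper does (isonormal representation, It\^o formula, rescaling $\boldsymbol{\lambda}_i\mapsto\boldsymbol{\lambda}_i/T$, the Tauberian asymptotics (\ref{asympsd}) to identify the candidate kernel, and part (i) to show it is square-integrable against $G_\alpha\otimes G_\alpha$), but you finish differently. The paper proves mean-square convergence directly: by the isometry of double Wiener--It\^o integrals, $\mathrm{E}[S_T-(\text{candidate})]^2$ is an integral against the factor $Q_T$ of (\ref{Qeq}), which tends to zero pointwise under Condition A3 and is dominated using part (i). You instead propose to identify the \emph{law} of the candidate by computing its cumulants $2^{m-1}(m-1)!\,\mathrm{Tr}$ of the $m$-th power of the associated Hilbert--Schmidt operator and matching them with $c_m=\mathrm{Tr}(\mathcal{K}_\alpha^m)$ from (\ref{chf})--(\ref{ecumulant}). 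This is a legitimate alternative, and since $S_\infty$ is defined in Theorem \ref{pr1} only through its characteristic function, equality in distribution is all that (\ref{6.7.3}) requires; your route even bypasses the dominated-convergence step and much of Condition A3. The trade-offs are: (a) your $m$-fold trace identity is precisely the content the paper isolates as Proposition \ref{twooperators} and Corollary \ref{corr} (proved in Appendix A by Fourier-inverting the eigenfunctions rather than by iterating the Riesz identity $m$ times), so you are front-loading a nontrivial piece of later machinery, and the ``indicate the induction'' step is where the real work lives — each composition converts one spectral integration into a spatial one and the $\gamma$-constants must cancel at every stage; (b) the paper's argument yields the stronger statement that the renormalized functional converges in $L^2(\Omega)$ to that specific element of the second chaos, not merely that the limit law admits such a representation. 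Both routes are sound; yours is more self-contained on the probabilistic side but heavier on the harmonic analysis.
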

\begin{remark}
\textrm{Our goal in this paper is to focus of the case of Hermite
rank $m=2,$ which has very special properties not shared by the
higher orders, such as the existence of eigenvalues. We are aware of
the extension to all Hermite ranks, as described, for example, in
the more general and different approach presented in the   monograph
by Major (1981).}
\end{remark}

\begin{proof}
\noindent (i) From equation (\ref{eqrkdef}) and the proof of Theorem
\ref{WaeK},
\begin{eqnarray}
\|1_{D}\|_{\mathcal{H}_{2\alpha-d}}^{2}&=&
%(-\Delta )^{\alpha -d/2}(1_{D})(1_{D})=
\int_{D}\frac{1}{\gamma (d-2\alpha
)}\int_{D}\frac{1}{\|\mathbf{x}-\mathbf{y}\|^{2\alpha
}}d\mathbf{y}d\mathbf{x}\nonumber\\
&=& \frac{a_{d}^{2}}{2\gamma (d-2\alpha )}=\frac{1}{\gamma
(d-2\alpha )}\sum_{j=1}^{\infty }\lambda_{j}^{2}(\mathcal{K}_{\alpha
}^{2})=\frac{\mbox{Tr}(\mathcal{K}_{\alpha }^{2})}{\gamma (d-2\alpha
)} <\infty,\nonumber\\ \label{TRkalpha}
\end{eqnarray}
\noindent since $\mathcal{K}_{\alpha }^{2}$ is in the trace class.
Therefore, $1_{D}$ belongs to the Hilbert space
$\mathcal{H}_{2\alpha -d}$ with the inner product  introduced in
equation (\ref{ip}). From equation (\ref{eqrkdef}), we then obtain
$$\frac{a_{d}^{2}}{2\gamma (d-2\alpha
)}=\|1_{D}\|_{\mathcal{H}_{2\alpha -d}}^{2} = \frac{|D|^{2}}{(2\pi
)^{d}}\int_{\mathbb{R}^{d}}|K(\boldsymbol{\omega}_{1},D)|^{2}\|\boldsymbol{\omega}_{1}\|^{-(d-2\alpha
)}d\boldsymbol{\omega}_{1}.$$ \noindent It is well-known that the
Fourier transform defines an automorphism on the Schwartz space,
which, in particular, contains $C_{0}^{\infty}(D).$ Thus, the
Fourier transform of  any function in the space
$\mathcal{H}_{2\alpha -d}$ can be defined as the limit in the space
$\mathcal{H}_{2\alpha -d}$ of the Fourier transforms of functions in
$C_{0}^{\infty}(D).$ Therefore,  from equation (\ref{cfsf2}) with
$f(\mathbf{z})=|D|^{2}|K(\mathbf{z}, D)|^{2},$

\begin{eqnarray}
\frac{a_{d}^{2}}{2\gamma (d-2\alpha )}&=&\|1_{D}\|_{\mathcal{H}_{2\alpha -d}}^{2}= \frac{|D|^{2}}{(2\pi
)^{d}}\int_{\mathbb{R}^{d}}|K(\boldsymbol{\omega}_{1},D)|^{2}
\|\boldsymbol{\omega}_{1}\|^{-d+2\alpha }d\boldsymbol{\omega}_{1}\nonumber\\
&=&\frac{|D|^{2}}{(2\pi )^{d}}\frac{\gamma (2\alpha )}{[\gamma (\alpha
)]^{2}}\int_{\mathbb{R}^{d}}|K(\boldsymbol{\omega}_{1},D)|^{2}
\left[\int_{\mathbb{R}^{d}}\|\boldsymbol{\omega}_{1}-\boldsymbol{\omega}_{2}\|^{-d+\alpha
}\|\boldsymbol{\omega}_{2}\|^{-d+\alpha
}d\boldsymbol{\omega}_{2}\right]d\boldsymbol{\omega}_{1}\nonumber\\
 &=&\frac{|D|^{2}\gamma (2\alpha )}{(2\pi
)^{d}[\gamma (\alpha )]^{2}}\int_{\mathbb{R}^{2d}}|K(\boldsymbol{\lambda}_{1}+\boldsymbol{\lambda}_{2},D)|^{2}
\frac{d\boldsymbol{\lambda}_{1}d\boldsymbol{\lambda}_{2}}{(\|\boldsymbol{\lambda}_{1}\|\|\boldsymbol{\lambda}_{2}\|)^{d-\alpha
}}\nonumber.
  \label{eqsiv}
\end{eqnarray}
Hence, \begin{equation}\frac{a_{d}^{2}}{2} =\left[\frac{|D|}{\gamma (\alpha
)}\right]^{2}\int_{\mathbb{R}^{2d}}|K(\boldsymbol{\lambda}_{1}+\boldsymbol{\lambda}_{2},D)|^{2}
\frac{d\boldsymbol{\lambda}_{1}d\boldsymbol{\lambda}_{2}}{(\|\boldsymbol{\lambda}_{1}\|\|\boldsymbol{\lambda}_{2}\|)^{d-\alpha
}},\label{Tabconst}
\end{equation}

\noindent  since $ \frac{\gamma (2\alpha )\gamma (d-2\alpha )}{(2\pi
)^{d}}=1.$ Note that, we also have applied  the fact that, from
Remark \ref{insp},
$$1_{D}\star
1_{D}(\mathbf{x})=\int_{\mathbb{R}^{d}}1_{D}(\mathbf{y})1_{D}(\mathbf{x}+\mathbf{y})d\mathbf{y}=\int_{D}1_{D}(\mathbf{x}+\mathbf{y})d\mathbf{y}
\in L^{2}(D)\subseteq \mathcal{H}_{2\alpha -d},$$ \noindent since
$$\int_{\mathbb{R}^{d}}\left|\int_{\mathbb{R}^{d}}1_{D}(\mathbf{y})1_{D}(\mathbf{x}+\mathbf{y})d\mathbf{y}\right|^{2}d\mathbf{x}\leq
\left|\mathcal{B}_{R(D)}(\mathbf{0})\right|^{3},$$ \noindent where,
as before, $|\mathcal{B}_{R(D)}|$ denotes the Lebesgue measure of
the ball of center $\mathbf{0}$ and radius $R(D),$ with $R(D)$ being
equal to two times the diameter of the regular bounded open  set $D$
containing the point $\mathbf{0}.$ Hence, $\mathcal{F}(1_{D}\star
1_{D})(\boldsymbol{\lambda })=|D|^{2}|K(\boldsymbol{\lambda
},D)|^{2}$ belongs to the space of Fourier transforms of functions
in $\mathcal{H}_{2\alpha -d}.$ Summarizing, equation
(\ref{Tabconst}) provides the finiteness of (\ref{6.7.1}), i.e.,
assertion (i) holds due to the trace property of
$\mathcal{K}_{\alpha }^{2}$ for the regular bounded
  domain $D$ considered (see Theorem \ref{WaeK}).
%\vspace*{1cm}

\bigskip
\noindent   (ii) The proof of this part of Theorem \ref{par} can be
obtained as a  particular case of  Theorem 5 in \cite{LeonenkoOlenko14} (see also Remark 6 in that paper). Note that convexity is not
used in the proof of Theorem 5 of \cite{LeonenkoOlenko14}. An
outline of the proof of Theorem 5 in  \cite{LeonenkoOlenko14} for
the case of Hermite rank equal to two is now given.

Under {\bfseries Conditions A1, A3} (see also  (\ref{isorep})),
\begin{equation}Y(\mathbf{x})=\frac{|D (T)|}{(2\pi)^{d}}\int_{\mathbb{R}^{d}}\exp\left(\mathrm{i}\left\langle \mathbf{x}, \boldsymbol{\lambda }\right\rangle\right)
K\left( \boldsymbol{\lambda
},D(T)\right)f_{0}^{1/2}(\boldsymbol{\lambda
})Z(d\boldsymbol{\lambda }),\quad \mathbf{x}\in D(T).
\label{eqfunctIWSR}
\end{equation}
\noindent Using the self-similarity of
Gaussian white noise, and the It$\widehat{\mbox{o}}$ formula  (see, for example, \cite{Dobrushin}; \cite{Major}), we
obtain from equation (\ref{eqfunctIWSR})
\begin{eqnarray}
S_{T}&=&=\frac{1}{T^{d-\alpha }\mathcal{L}(T)}\int_{D
(T)}H_{2}(Y(\mathbf{x}))d\mathbf{x}\nonumber\\ &=&\frac{c\left( d,\alpha \right)|D (T)|}{T^{d-\alpha
}\mathcal{L}(T)}\int_{\mathbb{R}^{2d}}^{\prime \prime}K\left( \boldsymbol{\lambda } _{1}+%
\boldsymbol{\lambda } _{2},D(T)\right) \left( \frac{1}{c\left(
d,\alpha \right)}\prod_{j=1}^{2}f_{0}^{1/2}(\boldsymbol{\lambda
}_{j})\right) Z\left( d\boldsymbol{\lambda } _{1}\right) Z\left(
d\boldsymbol{\lambda }
_{2}\right)\nonumber\\
&\underset{d}{=}&\frac{c\left( d,\alpha \right)|D|}{T^{d-\alpha
}\mathcal{L}(T)}\int_{\mathbb{R}^{2d}}^{\prime \prime}K\left(
\boldsymbol{\lambda } _{1}+ \boldsymbol{\lambda } _{2},D
\right)\left( \frac{1}{c\left( d,\alpha
\right)}\prod_{j=1}^{2}f_{0}^{1/2}(\boldsymbol{\lambda
}_{j}/T)\right) Z\left( d\boldsymbol{\lambda } _{1}\right)
Z\left( d\boldsymbol{\lambda } _{2}\right).\nonumber\\
\label{eqifsr}
\end{eqnarray}
By the isometry property of multiple stochastic integrals
\begin{eqnarray}
& & \mathrm{E}\left[ S_{T}-c\left( d,\alpha \right)|D |
\int_{\mathbb{R}^{2d}}^{\prime \prime}
H(\boldsymbol{\lambda }_{1},\boldsymbol{\lambda }_{2}) \frac{%
Z\left( d\boldsymbol{\lambda } _{1}\right)Z\left(
d\boldsymbol{\lambda }
_{2}\right) }{\left\Vert \boldsymbol{\lambda }_{1}\right\Vert ^{\frac{%
d-\alpha }{2}}\left\Vert \boldsymbol{\lambda }_{2}\right\Vert ^{\frac{%
d-\alpha }{2}}} \right] ^{2}=\nonumber\\
& & =\int_{\mathbb{R}^{2d}}\left\vert K\left( \boldsymbol{\lambda }
_{1}+\boldsymbol{\lambda } _{2},D \right) \right\vert ^{2}[c\left(
d,\alpha \right)|D |]^{2} Q_{T}(\boldsymbol{\lambda
}_{1},\boldsymbol{\lambda }_{2})\frac{d\boldsymbol{\lambda }
_{1}d\boldsymbol{\lambda } _{2}}{\left\Vert \boldsymbol{\lambda }
_{1}\right\Vert ^{d-\alpha }\left\Vert \boldsymbol{\lambda }
_{2}\right\Vert ^{d-\alpha }}, \label{eq60}
\end{eqnarray}
\noindent where
\begin{equation}
Q_{T}(\boldsymbol{\lambda } _{1},\boldsymbol{\lambda } _{2})=\left(
\left[\frac{\left\Vert \boldsymbol{\lambda } _{1}\right\Vert
^{(d-\alpha )/2}\left\Vert \boldsymbol{\lambda } _{2}\right\Vert
^{(d-\alpha )/2}}{T^{d-\alpha }\mathcal{L}(T)c\left( d,\alpha
\right)}\prod_{j=1}^{2}f_{0}^{1/2}(\boldsymbol{\lambda }
_{j}/T)\right]-1\right)^{2}. \label{Qeq}
\end{equation}

From equation (\ref{asympsd}), under {\bfseries Condition A3},
 we obtain the pointwise
convergence of $Q_{T}(\boldsymbol{\lambda } _{1},\boldsymbol{\lambda
} _{2})$ to $0,$ as $T\rightarrow \infty.$ By Lebesgue's Dominated
Convergence Theorem, the integral converges to zero if there is some
integrable function which dominates integrands for all T. This fact
can be proved as in pp. 21--22 of \cite{LeonenkoOlenko14},
applying  previous assertion (i) derived in this theorem.
\end{proof}

Alternatively, in the proof  of Theorem \ref{par}(ii), the class $\widetilde{\mathcal{L}}%
\mathcal{C}$ of slowly varying functions, introduced in Definition 9
in \cite{LeonenkoOlenko13}, can also be considered. Note that  an
infinitely differentiable function $\mathcal{L}({\cdot})$ belongs to
the class $\widetilde{\mathcal{L}}\mathcal{C}$ if

\begin{itemize}
\item[1.] for any $\delta >0,$ there exists $\lambda_{0}(\delta )>0$ such
that $\lambda^{-\delta }\mathcal{L}(\lambda )$ is decreasing and $%
\lambda^{\delta }\mathcal{L}(\lambda )$ is increasing if $\lambda
>\lambda_{0}(\delta );$

\item[2.] $\mathcal{L}_{j}\in \mathcal{S}\mathcal{L},$ for all $j\geq 0,$
where $\mathcal{L}_{0}(\lambda ):=\mathcal{L},$
$\mathcal{L}_{j+1}(\lambda ):=\lambda \mathcal{L}_{j}^{\prime
}(\lambda ),$ with $\mathcal{S}\mathcal{L} $ being the class of
functions that are slowly varying at infinity and bounded on each
finite interval.
\end{itemize}

In that case,  the following lemma can be applied in the proof
of Theorem \ref{par}(ii).
\begin{lemma}
\label{LeonenkoOlenko12} \textit{Let $\alpha \in (0,d),$ $S\in
C^{\infty }(s_{n-1}(1)),$ and $\mathcal{L}\in
\widetilde{\mathcal{L}}\mathcal{C}.$ Let $\{\xi (\mathbf{x}),\
\mathbf{x}\in \mathbb{R}^{d}\}$ be a mean-square
continuous homogeneous random field with zero mean. Let the field $\xi (%
\mathbf{x})$ has the spectral density $f_{0}(\mathbf{u}),$
$\mathbf{u}\in \mathbb{R}^{d},$ which is infinitely differentiable
for all $\mathbf{u}\neq
0.$ If the covariance function $B(\mathbf{x}),$ $\mathbf{x}\in \mathbb{R}%
^{d},$ of the field has the following behavior }
\begin{itemize}
\item[(a)] $\|\mathbf{x}\|^{\alpha }B(\mathbf{x})\sim S\left( \frac{\mathbf{x%
}}{\|\mathbf{x}\|}\right)\mathcal{L}(\|\mathbf{x}\|),\quad \mathbf{x}%
\longrightarrow \infty ,$

\noindent \textit{the spectral density satisfies the condition}

\item[(b)] $\|\mathbf{u}\|^{d-\alpha }f_{0}(\mathbf{u})\sim \widetilde{S}%
_{\alpha ,d}\left( \frac{\mathbf{u}}{\|\mathbf{u}\|}\right)\mathcal{L}\left(%
\frac{1}{\|\mathbf{u}\|}\right),\quad \| \mathbf{u}\|\longrightarrow
0.$
\end{itemize}
\end{lemma}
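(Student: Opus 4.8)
The plan is to prove (b) by a directional Tauberian argument: recover the spectral density $f_{0}$ as the (tempered-distributional) Fourier transform of the covariance $B$, strip off the contribution of the error in (a), factor out the slowly varying function, and compute the Fourier transform of the resulting homogeneous kernel. First I would fix a direction $\boldsymbol{\theta}=\mathbf{u}/\|\mathbf{u}\|$ and set $r=\|\mathbf{u}\|\to 0$. From (a), write $B(\mathbf{x})=g(\mathbf{x})\mathcal{L}(\|\mathbf{x}\|)+R(\mathbf{x})$, where $g(\mathbf{x})=\|\mathbf{x}\|^{-\alpha}S(\mathbf{x}/\|\mathbf{x}\|)$ and $R(\mathbf{x})=o\!\left(\|\mathbf{x}\|^{-\alpha}\mathcal{L}(\|\mathbf{x}\|)\right)$ as $\|\mathbf{x}\|\to\infty$; fixing $A$ large, I also absorb into $R$ the restriction of $B$ to $\{\|\mathbf{x}\|\le A\}$, which is bounded because $\xi$ has a spectral density (so $B(\mathbf{0})=\int f_{0}<\infty$). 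Since $f_{0}=(2\pi)^{-d}\widehat{B}$ in $\mathcal{S}'(\mathbb{R}^{d})$, the compactly supported part of $R$ has bounded continuous Fourier transform, hence contributes $o\!\left(r^{-(d-\alpha)}\mathcal{L}(1/r)\right)$ because $d-\alpha>0$ and $\mathcal{L}>0$; and for the tail part of $R$ the scaling substitution $\mathbf{x}=\mathbf{y}/r$, together with the Potter-type bounds built into $\widetilde{\mathcal{L}}\mathcal{C}$ (the functions $\lambda^{\pm\delta}\mathcal{L}(\lambda)$ are eventually monotone), shows it is $o\!\left(r^{-(d-\alpha)}\mathcal{L}(1/r)\right)$ as well. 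Thus everything reduces to the small-$r$ asymptotics of $\widehat{g\mathcal{L}}(r\boldsymbol{\theta})$.

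For that, the same substitution $\mathbf{x}=\mathbf{y}/r$ gives $r^{d-\alpha}\widehat{g\mathcal{L}}(r\boldsymbol{\theta})=\int_{\mathbb{R}^{d}}\|\mathbf{y}\|^{-\alpha}S(\mathbf{y}/\|\mathbf{y}\|)\,\mathcal{L}(\|\mathbf{y}\|/r)\,e^{-\mathrm{i}\langle\boldsymbol{\theta},\mathbf{y}\rangle}\,d\mathbf{y}$, and the uniform convergence $\mathcal{L}(\|\mathbf{y}\|/r)/\mathcal{L}(1/r)\to 1$ (uniform on compacta, controlled at infinity by the same monotonicity) lets one factor out $\mathcal{L}(1/r)$ and pass to the limit, so that $r^{d-\alpha}\widehat{g\mathcal{L}}(r\boldsymbol{\theta})/\mathcal{L}(1/r)\to\widehat{g}(\boldsymbol{\theta})$. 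It then remains to identify $\widehat{g}$ on the sphere. Since $S\in C^{\infty}$ on the unit sphere of $\mathbb{R}^{d}$, expand $S=\sum_{l\ge 0}\sum_{m}a_{l,m}Y_{l,m}$ with coefficients $a_{l,m}$ decaying faster than any power of $l$; by the Bochner--Hecke identity the Fourier transform of $\|\mathbf{x}\|^{-\alpha}Y_{l,m}(\mathbf{x}/\|\mathbf{x}\|)$ is $\gamma_{l}(\alpha,d)\,\|\mathbf{u}\|^{-(d-\alpha)}Y_{l,m}(\mathbf{u}/\|\mathbf{u}\|)$, where $\gamma_{l}(\alpha,d)$ is an explicit nonzero ratio of Gamma functions growing at most polynomially in $l$, and reducing for $l=0$ to $\gamma(d-\alpha)$ of equation (\ref{eRc}) (this case is exactly Lemma \ref{l1s117}(i)). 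Summing termwise---legitimate by the rapid decay of $a_{l,m}$---shows $\widehat{g}$ is homogeneous of degree $-(d-\alpha)$ with spherical profile $\sum_{l,m}\gamma_{l}(\alpha,d)a_{l,m}Y_{l,m}$, so that, after including the normalising $(2\pi)^{-d}$, $\widetilde{S}_{\alpha,d}=(2\pi)^{-d}\sum_{l,m}\gamma_{l}(\alpha,d)a_{l,m}Y_{l,m}\in C^{\infty}$ on the sphere, and $r^{d-\alpha}f_{0}(r\boldsymbol{\theta})/\mathcal{L}(1/r)\to\widetilde{S}_{\alpha,d}(\boldsymbol{\theta})$, which is (b). In the isotropic case $S\equiv\mathrm{const}$ only the $l=0$ term survives and $\widetilde{S}_{\alpha,d}=c(d,\alpha)S$, recovering (\ref{asympsd}) via $\gamma(\alpha)\gamma(d-\alpha)=(2\pi)^{d}$.

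The main obstacle is analytic rather than algebraic: $g$ is locally integrable but not integrable at infinity, so $\widehat{g}$ exists only as a tempered distribution, and the interchanges of limit, sum and integral above must be justified with enough uniformity in the direction $\boldsymbol{\theta}$ to yield a genuine asymptotic equivalence as $r\to 0$ along arbitrary sequences. This is precisely where the smoothness hypotheses enter: $f_{0}\in C^{\infty}(\mathbb{R}^{d}\setminus\{0\})$ and the stability of $\widetilde{\mathcal{L}}\mathcal{C}$ under $\mathcal{L}\mapsto\lambda\mathcal{L}'$ (so that every $\mathcal{L}_{j}$ is slowly varying and locally bounded) allow repeated integration by parts in the rescaled integrals, producing absolutely convergent representations with direction-uniform dominating functions; one also checks $\gamma_{l}(\alpha,d)\neq 0$ for all $l$, so that $\widetilde{S}_{\alpha,d}\not\equiv 0$ and the relation ``$\sim$'' in (b) is meaningful. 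Since Theorem \ref{par}(ii) invokes this lemma only for isotropic $B$, in that application the spherical-harmonic bookkeeping collapses and the argument amounts just to the scaling reduction of the first paragraph combined with Lemma \ref{l1s117}(i).
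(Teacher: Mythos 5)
First, note that the paper does not actually prove Lemma \ref{LeonenkoOlenko12}: it is imported from \cite{LeonenkoOlenko13} (Abelian and Tauberian theorems for long-range dependent random fields), so there is no internal proof to compare yours against. Judged on its own terms, your outline handles the main term correctly: the rescaling $\mathbf{x}=\mathbf{y}/r$, the factorization of $\mathcal{L}(1/r)$ by uniform convergence on compacta, and the Bochner--Hecke computation of $\mathcal{F}\left(\|\mathbf{x}\|^{-\alpha}Y_{l,m}\right)$ (which indeed reduces to Lemma \ref{l1s117}(i) for $l=0$ and recovers the constant of (\ref{eRc}) in the isotropic case) are all sound, and the at-most-polynomial growth of the Gamma ratios against the rapid decay of the coefficients of $S$ makes the termwise summation legitimate.

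The gap is in the treatment of the remainder and, with it, in the role of the hypothesis $f_{0}\in C^{\infty}(\mathbb{R}^{d}\setminus\{\mathbf{0}\})$. You assert that the tail part of $R(\mathbf{x})=o\left(\|\mathbf{x}\|^{-\alpha}\mathcal{L}(\|\mathbf{x}\|)\right)$ contributes $o\left(r^{-(d-\alpha)}\mathcal{L}(1/r)\right)$ \emph{pointwise} at $r\boldsymbol{\theta}$, by scaling and Potter-type bounds. But $R$ carries no structure beyond that one-sided size bound: it is not smooth, not monotone along rays, and not absolutely integrable, so its Fourier transform exists only as a tempered distribution and admits no pointwise estimate at all; scaling and Potter bounds control $R$ over dyadic shells and hence yield asymptotics for $\widehat{R}$ only after integration against test functions. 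This is exactly why results of this type are genuinely Tauberian rather than purely Abelian: one first proves the averaged statement (say for $\int_{\|\mathbf{u}\|\le r}f_{0}(\mathbf{u})\,d\mathbf{u}$, where the $o(\cdot)$ remainder is harmless) and then de-averages using a regularity hypothesis on $f_{0}$ itself --- monotonicity near the origin as in {\bfseries Condition A3}, or the smoothness assumed in the lemma. Your proposal instead spends the smoothness of $f_{0}$ on ``integration by parts in the rescaled integrals'', but those integrals live on the $\mathbf{x}$-side, where integration by parts would require smoothness of $B$ and of $R$, which is not assumed. Without the de-averaging step the argument does not close, even though every other ingredient is in place.
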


On the other hand,  from Theorems \ref{WaeK}  and \ref{par}(i), the
spectral asymptotics of $\mathcal{K}_{\alpha }$ and the Dirichlet
Laplacian operator on $L^{2}(D)$ can be applied to verifying the
finiteness of (\ref{6.7.1}) for a wide class of  domains $D.$ Drum
and fractal drum are two families of well-known  regular compact
sets where Weyl's classical theorem on the asymptotic behavior of
the eigenvalues has been extended (see, for example, \cite{Gordon}; \cite{Lapidus}; \cite{Triebel97}). In particular, as
illustration of Theorem \ref{par}(i), we now refer to the case of
regular compact domains constructed from the finite union of convex
compact sets like balls, or by their difference which is the case,
for instance, of circular rings.

\subsubsection*{Examples}

Let $ D=\mathcal{B}_{1}(\mathbf{0})\cup
\mathcal{B}_{1}((2,0))\subset \mathbb{R}^{2}, $ with
$\mathcal{B}_{1}(\mathbf{0})=\{(x_{1},x_{2})\in \mathbb{R}^{2}:\ \sqrt{%
x_{1}^{2}+x_{2}^{2}}\leq 1\},$ and
$\mathcal{B}_{1}((2,0))=\{(x_{1},x_{2})\in \mathbb{R}^{2}:\
\sqrt{(x_{1}-2)^{2}+x_{2}^{2}}\leq 1\}.$ It is well-known
(see \cite{Ivanov}, p. 57, Lemma 2.1.3) that, for $\mathcal{B}_{1}(%
\mathbf{0})\subset \mathbb{R}^{2}$ and $0<\alpha <1,$
\begin{equation*}
Tr([\mathcal{K}_{\alpha }^{\mathcal{B}_{1}(\mathbf{0})}]^{2})=\int_{\mathcal{B}_{1}(%
\mathbf{0})}\int_{\mathcal{B}_{1}(\mathbf{0})}\frac{1}{\Vert \mathbf{x}-\mathbf{y%
}\Vert ^{2\alpha }}d\mathbf{y}d\mathbf{x}=\frac{2^{2-2\alpha +1}\pi ^{2-%
\frac{1}{2}}\Gamma (\frac{2-2\alpha +1}{2})}{(2-2\alpha )\Gamma (2-\alpha +1)%
},
\end{equation*}

\noindent where, to avoid confusion, for a subset $S,$ we have used the
notation $\mathcal{K}_{\alpha }^{S}$ to represent operator $\mathcal{K}%
_{\alpha }$ acting on the space $L^{2}(S),$ and $[\mathcal{K}_{\alpha
}^{S}]^{2}=\mathcal{K}_{\alpha }^{S}\mathcal{K}_{\alpha }^{S}.$

Hence,
\begin{eqnarray}
&&\int_{\mathcal{B}_{1}(\mathbf{0})\cup \mathcal{B}_{1}((2,0))}\int_{\mathcal{B}_{1}(%
\mathbf{0})\cup \mathcal{B}_{1}((2,0))}\frac{1}{\Vert
\mathbf{x}-\mathbf{y}\Vert
^{2\alpha }}d\mathbf{y}d\mathbf{x}  \notag  \label{eqillust1} \\
&\leq &\int_{\mathcal{B}_{3}(\mathbf{0})}\int_{\mathcal{B}_{3}(\mathbf{0})}%
\frac{1}{\Vert \mathbf{x}-\mathbf{y}\Vert ^{2\alpha }}d\mathbf{y}d\mathbf{x}
\notag \\
&=&\mbox{Tr}\left( [\mathcal{K}_{\alpha }^{\mathcal{B}_{3}(\mathbf{0}%
)}]^{2}\right) =3^{4-2\alpha }\mbox{Tr}\left( [\mathcal{K}_{\alpha }^{%
\mathcal{B}_{1}(\mathbf{0})}]^{2}\right) =\frac{3^{4-2\alpha
}2^{2-2\alpha +1}\pi ^{2-\frac{1}{2}}\Gamma (\frac{2-2\alpha
+1}{2})}{(2-2\alpha )\Gamma
(2-\alpha +1)}<\infty .  \notag \\
&&
\end{eqnarray}%
\noindent From Theorem \ref{par}(i), equation (\ref{eqillust1}) provides the
finiteness of (\ref{6.7.1}) for non-convex compact set $D=\mathcal{B}_{1}(%
\mathbf{0})\cup \mathcal{B}_{1}((2,0)).$

These computations can be easily
extended to the finite union of balls with the same or with different
radius, and to the case $d>2,$ considering the value of the integral
\begin{equation*}
\int_{\mathcal{B}_{R}(\mathbf{0})}\int_{\mathcal{B}_{R}(\mathbf{0})}\frac{1}{%
\Vert \mathbf{x}-\mathbf{y}\Vert ^{2\alpha }}d\mathbf{y}d\mathbf{x}%
=R^{2d-2\alpha }a_{d}(\mathcal{B}_{1}(\mathbf{0}))\frac{1}{2},
\end{equation*}%
\noindent where the constant $a_{d}(\mathcal{B}_{1}(\mathbf{0}))$ is defined in (\ref{eedeball}),
for $0<\alpha <d/2$ (see \cite{Ivanov}, p. 57, Lemma
2.1.3).

For the case of a circular ring, that is, for
$$
D= \mathcal{B}_{R_{1}}(%
\mathbf{0})\backslash \mathcal{B}_{R_{2}}(\mathbf{0})=\{ \mathbf{x}\in
\mathbb{R}^{2}:\ R_{2}<\|\mathbf{x}\|< R_{1}\},\quad R_{1}>R_{2}>0,
$$
 we can
proceed in a similar way to the above-considered example. Specifically,
\begin{eqnarray}
&&\int_{\mathcal{B}_{R_{1}}(\mathbf{0})\backslash \mathcal{B}_{R_{2}}(%
\mathbf{0})}\int_{\mathcal{B}_{R_{1}}(\mathbf{0})\backslash \mathcal{B}%
_{R_{2}}(\mathbf{0})}\frac{1}{\Vert \mathbf{x}-\mathbf{y}\Vert ^{2\alpha }}d%
\mathbf{y}d\mathbf{x}  \notag \\
&\leq &\int_{\mathcal{B}_{R_{1}}(\mathbf{0})}\int_{\mathcal{B}_{R_{1}}(%
\mathbf{0})}\frac{1}{\Vert \mathbf{x}-\mathbf{y}\Vert ^{2\alpha }}d\mathbf{y}%
d\mathbf{x}  \notag
\end{eqnarray}
\begin{eqnarray}
&=&\mbox{Tr}\left( [\mathcal{K}_{\alpha }^{\mathcal{B}_{R_{1}}(\mathbf{0}%
)}]^{2}\right) =R_{1}^{4-2\alpha }\mbox{Tr}\left( [\mathcal{K}_{\alpha }^{%
\mathcal{B}(\mathbf{0})}]^{2}\right) =\frac{R_{1}^{4-2\alpha }2^{2-2\alpha
+1}\pi ^{2-\frac{1}{2}}\Gamma (\frac{2-2\alpha +1}{2})}{(2-2\alpha )\Gamma
(2-\alpha +1)}<\infty .  \notag
\end{eqnarray}

\noindent From Theorem \ref{par}(i), equation (\ref{6.7.1}) is finite for $%
D= \mathcal{B}_{R_{1}}(\mathbf{0})\backslash \mathcal{B}_{R_{2}}(\mathbf{0}).
$ Similarly, these computations can be extended to the finite union of
circular rings.

\begin{remark}
\textrm{Note that for a ball} $D=\mathcal{B}_{1}(\mathbf{0})=\mathcal{B}(%
\mathbf{0})=\{ \mathbf{x}\in \mathbb{R}^{d};\ \|\mathbf{x}\| \leq
1\},$ \textrm{the function} $\vartheta (\boldsymbol{\lambda })$
\textrm{in} (\ref{UD1}) \textrm{is of the form}
$\int_{\mathcal{B}_{1}(\mathbf{0})} \exp\left(\mathrm{i}\left\langle \mathbf{x},%
\boldsymbol{\lambda }\right\rangle\right)d\mathbf{x}=(2\pi )^{d/2} \frac{%
\mathcal{J}_{d/2}\left( \|\boldsymbol{\lambda }\|\right)}{\|\boldsymbol{%
\lambda }\|^{d/2}},$ \textrm{for} $d\geq 2,$ \textrm{where}
$\mathcal{J}_{\nu }(\mathbf{z})$ \textrm{is the Bessel function of
the first kind and order} $\nu > -1/2.$ \textrm{For a rectangle,}
$D=\prod =\left\{ a_{i}\leq x_{i}\leq b_{i},\ i=1,\dots,d\right\},$
\textrm{with} $\mathbf{0}\in \prod,$ \textrm{we have} $\vartheta
(\boldsymbol{\lambda })= \prod_{j=1}^{d}\left( \exp\left(
\mathrm{i}\lambda_{j}b_{j}\right)-\exp\left(
\mathrm{i}\lambda_{j}a_{j}\right)\right)/\mathrm{i}\lambda_{j},$ for
$d\geq 1.$ \textrm{Moreover for $d=2,$ considering the non-convex set} $D=\mathcal{B}_{1}(\mathbf{0}%
)\cup \mathcal{B}_{1}((2,0))\subset \mathbb{R}^{2},$
\begin{equation*}
\vartheta (\boldsymbol{\lambda })=\vartheta (\lambda_{1},\lambda_{2})=\int_{%
\mathcal{B}_{1}(\mathbf{0})\cup \mathcal{B}_{1}((2,0))}
\exp\left(\mathrm{i}\left\langle \mathbf{x},\boldsymbol{\lambda
}\right\rangle\right)d\mathbf{x}= \frac{2\pi
\mathcal{J}_{1}(\|\boldsymbol{\lambda }\|)}{\|\boldsymbol{\lambda }\|}%
\left(1+\exp \left( 2\mathrm{i}\lambda_{1}\right)\right),
\end{equation*}
\noindent \textrm{and for} $D=\mathcal{B}_{R_{1}}(\mathbf{0})\backslash \mathcal{B}%
_{R_{2}}(\mathbf{0}),$  $\vartheta (\boldsymbol{\lambda })= (2\pi R_{1})\mathcal{J}_{1}\left( \|%
\boldsymbol{\lambda }\|R_{1}\right)/\|\boldsymbol{\lambda }\|-(2\pi R_{2})%
\mathcal{J}_{1}\left( \|\boldsymbol{\lambda }\|R_{2}\right)/\|\boldsymbol{%
\lambda }\|.$

\end{remark}

\vspace*{1cm}

The following corollary is an extension of Proposition 2 in
\cite{Dobrushin}.

\begin{corollary}
\label{corr} \textit{Assume that the conditions of Theorem \ref{par} hold. Then, the
limit random variable $S_{\infty }$ admits the following series
representation:}

\begin{equation}
S_{\infty }\underset{d}{=}  c(d,\alpha )|D |\sum_{\mathbf{n}\in
\mathbb{N}^{d}_{*}}\mu_{\mathbf{n}}(\mathcal{H})(\varepsilon_{\mathbf{n}}^{2}-1)= \sum_{\mathbf{n}\in \mathbb{N}^{d}_{*}}\lambda_{\mathbf{n}}(S_{\infty})(%
\varepsilon_{\mathbf{n}}^{2}-1),  \label{eqlsum}
\end{equation}
\noindent \textit{where}  $$c(d,\alpha )=\frac{\Gamma
\left(\frac{d-\alpha}{2}\right)}{\pi^{d/2}2^{\alpha }\Gamma
(\alpha/2)}=\frac{1}{\gamma (\alpha )}$$ \textit{was already introduced in
 (\ref{eRc})}, $\varepsilon_{\mathbf{n}}$ \textit{are independent and
identically
distributed standard Gaussian random variables, and} $\mu_{\mathbf{n}}(%
\mathcal{H}),$ $\mathbf{n}\in \mathbb{N}^{d}_{*},$ \textit{is a sequence of
non-negative real numbers, which are the eigenvalues of the self-adjoint
Hilbert-Schmidt operator}
\begin{equation}
\mathcal{H}(h)(\boldsymbol{\lambda
}_{1})=\int_{\mathbb{R}^{d}}H_{1}\left( \boldsymbol{\lambda
}_{1}-\boldsymbol{\lambda }_{2}\right) h\left(
\boldsymbol{\lambda }_{2}\right) G_{\alpha }(d\boldsymbol{\lambda }%
_{2}):L^{2}_{E}(\mathbb{R}^{d},G_{\alpha }) \longrightarrow
L^{2}_{E}(\mathbb{R}^{d},G_{\alpha }), \label{hso}
\end{equation}%
\noindent \textit{with}
\begin{equation}
G_{\alpha }(d\mathbf{x})=\frac{1}{\|\mathbf{x}\|^{d-\alpha
}}d\mathbf{x}, \label{kernelGG}
\end{equation}

\noindent \textit{and} $L^{2}_{E}(\mathbb{R}^{d},G_{\alpha })$
\textit{denoting the collection of linear combinations, with
real-valued coefficients, of complex-valued and Hermitian functions,
that are square integrable with respect to $G_{\alpha
}(d\mathbf{x}).$ Note that $L^{2}_{E}(\mathbb{R}^{d}, G_{\alpha })$
is a real Hilbert space, endowed with the scalar product
$$\left\langle \psi_{1},\psi_{2}\right\rangle_{G_{\alpha
}}=\int_{\mathbb{R}^{d}}\psi_{1}(\mathbf{x})\overline{\psi_{2}(\mathbf{x})}G_{\alpha
}(d\mathbf{x})$$ \noindent (see \cite{Peccati}, pp.
159-161, for the case of $L^{2}_{E}(\mathbb{R}, d\beta)$ spaces}).
\textit{The symmetric kernel} $H_{1}\left(\boldsymbol{\lambda
}_{1}-\boldsymbol{\lambda }_{2}\right)=H(\boldsymbol{\lambda }_{1},
\boldsymbol{\lambda }_{2}),$ \textit{is defined from  $H$ introduced
in  equation (\ref{eqH}), in terms of the characteristic function
$K$ given in equation} (\ref{UD1}).
\end{corollary}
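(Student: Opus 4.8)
The plan is to obtain the series representation (\ref{eqlsum}) from the double Wiener-It\^o stochastic integral representation (\ref{6.7.3}) established in Theorem \ref{par}(ii), by expanding the kernel in a suitable orthonormal basis of eigenfunctions of the operator $\mathcal{H}$ and then invoking the diagonalization of double stochastic integrals of symmetric Hilbert-Schmidt kernels into a sum of independent centered chi-squared variables. This is precisely the mechanism used by Dobrushin and Major in \cite{Dobrushin} for $d=1$, and the argument here is the same once the correct Hilbert space is identified.

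First I would rewrite (\ref{6.7.3}) as a double stochastic integral against the complex Gaussian measure $Z$ with respect to the spectral measure $G_{\alpha }(d\boldsymbol{\lambda })=\|\boldsymbol{\lambda }\|^{-(d-\alpha )}d\boldsymbol{\lambda }$: setting $\widetilde{Z}_{\alpha }(d\boldsymbol{\lambda }):=\|\boldsymbol{\lambda }\|^{-(d-\alpha )/2}Z(d\boldsymbol{\lambda })$, which is a complex Gaussian random measure with control measure $G_{\alpha }$, we get
\begin{equation*}
S_{\infty }\underset{d}{=}c(d,\alpha )|D|\int_{\mathbb{R}^{2d}}^{\prime \prime }H_{1}(\boldsymbol{\lambda }_{1}-\boldsymbol{\lambda }_{2})\,\widetilde{Z}_{\alpha }(d\boldsymbol{\lambda }_{1})\widetilde{Z}_{\alpha }(d\boldsymbol{\lambda }_{2}),
\end{equation*}
where $H_{1}(\boldsymbol{\lambda }_{1}-\boldsymbol{\lambda }_{2})=H(\boldsymbol{\lambda }_{1},\boldsymbol{\lambda }_{2})=K(\boldsymbol{\lambda }_{1}+\boldsymbol{\lambda }_{2},D)$ is the symmetric, Hermitian kernel of the statement (symmetry and the Hermitian property follow from (\ref{UD1}) since $D$ is real and contains $\mathbf{0}$, so $\overline{K(\boldsymbol{\lambda },D)}=K(-\boldsymbol{\lambda },D)$). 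By Theorem \ref{par}(i) this kernel lies in $L^{2}(\mathbb{R}^{2d},G_{\alpha }\otimes G_{\alpha })$, so it is the kernel of a Hilbert--Schmidt operator on $L^{2}_{E}(\mathbb{R}^{d},G_{\alpha })$, namely the operator $\mathcal{H}$ in (\ref{hso}); being Hermitian it is self-adjoint, hence it has a real eigenvalue sequence $\{\mu_{\mathbf{n}}(\mathcal{H})\}_{\mathbf{n}\in \mathbb{N}^{d}_{*}}$ with $\ell^{2}$-summable squares and an orthonormal system of eigenfunctions $\{e_{\mathbf{n}}\}$ in $L^{2}_{E}(\mathbb{R}^{d},G_{\alpha })$. (Non-negativity of the $\mu_{\mathbf{n}}(\mathcal{H})$ is inherited from the fact that $\mathcal{K}_{\alpha }$ is a non-negative operator, since $\mathcal{H}$ is, up to the constant and the change of spectral representation, the spectral-domain avatar of $\mathcal{K}_{\alpha }$; this can be made precise via Proposition \ref{twooperators}, or taken as part of the statement's hypotheses.)

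Then I would expand $H_{1}(\boldsymbol{\lambda }_{1}-\boldsymbol{\lambda }_{2})=\sum_{\mathbf{n}}\mu_{\mathbf{n}}(\mathcal{H})e_{\mathbf{n}}(\boldsymbol{\lambda }_{1})\overline{e_{\mathbf{n}}(\boldsymbol{\lambda }_{2})}$ in $L^{2}(G_{\alpha }\otimes G_{\alpha })$ and substitute into the double integral. Using linearity and $L^{2}(\Omega )$-continuity of the double Wiener--It\^o integral, together with the standard identity $\int_{\mathbb{R}^{2d}}^{\prime \prime }e_{\mathbf{n}}(\boldsymbol{\lambda }_{1})\overline{e_{\mathbf{n}}(\boldsymbol{\lambda }_{2})}\,\widetilde{Z}_{\alpha }(d\boldsymbol{\lambda }_{1})\widetilde{Z}_{\alpha }(d\boldsymbol{\lambda }_{2})=\bigl|\int_{\mathbb{R}^{d}}e_{\mathbf{n}}\,d\widetilde{Z}_{\alpha }\bigr|^{2}-1=\varepsilon_{\mathbf{n}}^{2}-1$ for a real Hermitian $e_{\mathbf{n}}$ (where $\varepsilon_{\mathbf{n}}=\int e_{\mathbf{n}}\,d\widetilde{Z}_{\alpha }$ is a real standard Gaussian variable, and different $\mathbf{n}$ give orthogonal, hence independent, Gaussians because the $e_{\mathbf{n}}$ are orthonormal in $L^{2}_{E}(\mathbb{R}^{d},G_{\alpha })$), we arrive at
\begin{equation*}
S_{\infty }\underset{d}{=}c(d,\alpha )|D|\sum_{\mathbf{n}\in \mathbb{N}^{d}_{*}}\mu_{\mathbf{n}}(\mathcal{H})(\varepsilon_{\mathbf{n}}^{2}-1),
\end{equation*}
which is (\ref{eqlsum}) with $\lambda_{\mathbf{n}}(S_{\infty })=c(d,\alpha )|D|\mu_{\mathbf{n}}(\mathcal{H})$. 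Convergence of the series in $L^{2}(\Omega )$ follows from $\sum_{\mathbf{n}}\mu_{\mathbf{n}}(\mathcal{H})^{2}=\|\mathcal{H}\|_{HS}^{2}=\|H_{1}\|_{L^{2}(G_{\alpha }\otimes G_{\alpha })}^{2}<\infty $ by Theorem \ref{par}(i), and the computation reproduces the first two cumulants already recorded in (\ref{ecumulant}) (indeed $\mathrm{E}S_{\infty }=0$ and $\mathrm{Var}\,S_{\infty }=2[c(d,\alpha )|D|]^{2}\sum_{\mathbf{n}}\mu_{\mathbf{n}}(\mathcal{H})^{2}=2c_{2}$), which provides a consistency check.

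The main obstacle is the rigorous passage from the kernel expansion to the series of chi-squared variables: one must justify that the double Wiener--It\^o integral of the $L^{2}$-convergent partial sums converges in $L^{2}(\Omega )$ to the integral of $H_{1}$ (this is the isometry for double integrals, fine once $H_{1}\in L^{2}(G_{\alpha }\otimes G_{\alpha })$), and, more delicately, that the diagonal term $\boldsymbol{\lambda }_{1}=\pm\boldsymbol{\lambda }_{2}$ excluded in $\int^{\prime\prime}$ is handled consistently under the eigenexpansion so that the ``$-1$'' in $\varepsilon_{\mathbf{n}}^{2}-1$ appears with the right bookkeeping across the two hyperdiagonals. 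I would point to \cite{Dobrushin}, \cite{Major} and \cite{Peccati} (pp.~159--161, for the $L^{2}_{E}$ formalism) for these standard facts about diagonalization of second-order multiple integrals, and note that the only field-specific input needed — finiteness of $\|H_{1}\|_{L^{2}(G_{\alpha }\otimes G_{\alpha })}^{2}$ and self-adjointness of $\mathcal{H}$ — has already been secured in Theorem \ref{par}.
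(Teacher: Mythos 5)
Your proposal is correct and follows essentially the same route as the paper's Appendix A proof: spectral (Mercer-type) expansion of the Hilbert--Schmidt kernel $H_{1}$ in the eigenfunctions of $\mathcal{H}$ on $L^{2}_{E}(\mathbb{R}^{d},G_{\alpha })$, substitution into the double Wiener--It\^o integral justified by the isometry with the second Wiener chaos, and the identity that the off-diagonal double integral of $e_{\mathbf{n}}\otimes \overline{e_{\mathbf{n}}}$ equals $H_{2}$ of the corresponding single integral, i.e.\ $\varepsilon_{\mathbf{n}}^{2}-1$, with independence from orthonormality. The paper packages the interchange of sum and integral as an explicit isometry $\widehat{\mathcal{I}}_{2}$, but this is the same mechanism you invoke.
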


The proof can be seen in  Appendix A. Indeed, it constitutes an extension of  Proposition 2 in \cite{Dobrushin}.

In the following proposition the  explicit relationship between the
eigenvalues of $\mathcal{K}_{\alpha }$ and $\mathcal{H}$ is derived.
Note that
Theorem \ref{par}(i) provides the equality between the traces of operators $%
\frac{\mathcal{K}_{\alpha }^{2}}{[|D|c(d,\alpha )]^{2}}$ and $%
\mathcal{H}^{2},$ with, as before, $\mathcal{H}$ having kernel
$H(\cdot, \cdot) $ given  in equation (\ref{eqH}) .

\begin{proposition}
\label{twooperators}

\textit{The operators} $\mathcal{A}_{\alpha }:
L^{2}_{E}(\mathbb{R}^{d},G_{\alpha }) \longrightarrow
L^{2}_{E}(\mathbb{R}^{d},G_{\alpha })$
\begin{equation*}
\mathcal{A}_{\alpha }(f)(\boldsymbol{\lambda }_{1})= c(d,\alpha
)\int_{\mathbb{R}^{d}}H_{1}\left( \boldsymbol{\lambda }_{1}
-\boldsymbol{\lambda }_{2}\right) f\left( \boldsymbol{\lambda
}_{2}\right) G_{\alpha }(d\boldsymbol{\lambda }_{2}),
\end{equation*}
\noindent \textit{and} $|D|^{-1}\mathcal{K}_{\alpha}: L^{2}(D
)\longrightarrow L^{2}(D )$ \textit{have the same eigenvalues. Here,}
$c(d,\alpha )$ \textit{was already introduced in
 (\ref{eRc})}, $H_{1}\left(\boldsymbol{\lambda }_{1}-\boldsymbol{\lambda }_{2}\right)=H(\boldsymbol{\lambda
}_{1},\boldsymbol{\lambda }_{2})$ \textit{with kernel} $H$ \textit{being given in
equation (\ref{eqH})},
 $G_{\alpha }$ \textit{is introduced in} (\ref{kernelGG}), \textit{and} $\mathcal{K}_{\alpha}$ \textit{is defined in
(\ref{RKD})}.
\end{proposition}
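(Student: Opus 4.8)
The plan is to show that the two operators are related by a unitary-type intertwining, so that their spectra coincide. First I would recall that, by Theorem \ref{par}(i), the function $1_{D}$ (equivalently $|D|^{2}|K(\cdot,D)|^{2}$ after Fourier transform) lies in the Hilbert space $\mathcal{H}_{2\alpha-d}$, and that the Fourier transform furnishes an isometry (up to the constant $(2\pi)^{-d/2}$) between $L^{2}(D)$ equipped with the inner product $\langle\cdot,\cdot\rangle_{(-\Delta)^{\alpha-d/2}}$ of equation (\ref{ip}) and a weighted $L^{2}$-space with weight $\|\boldsymbol{\lambda}\|^{-(d-\alpha)}$, i.e. essentially $L^{2}_{E}(\mathbb{R}^{d},G_{\alpha})$. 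The key computational identity is the convolution formula (\ref{cfsf})--(\ref{cfsf2}): the kernel $\|\mathbf{x}-\mathbf{y}\|^{-2\alpha}$ of $\mathcal{K}_{\alpha}^{2}$, up to the constant $\gamma(d-2\alpha)$, factors as the $\gamma(\alpha)^{-2}$-multiple of the composition $\|\cdot\|^{-d+\alpha}\star\|\cdot\|^{-d+\alpha}$. On the Fourier side this becomes multiplication by $\|\boldsymbol{\lambda}\|^{-(d-\alpha)}$ twice, which is exactly the structure of $\mathcal{A}_{\alpha}$ acting on $L^{2}_{E}(\mathbb{R}^{d},G_{\alpha})$ with kernel $H_{1}(\boldsymbol{\lambda}_{1}-\boldsymbol{\lambda}_{2})=K(\boldsymbol{\lambda}_{1}+\boldsymbol{\lambda}_{2},D)$.

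Concretely, I would proceed as follows. Let $\phi$ be an eigenfunction of $|D|^{-1}\mathcal{K}_{\alpha}$ on $L^{2}(D)$ with eigenvalue $\mu$; since $\mathcal{K}_{\alpha}$ is a positive compact self-adjoint operator with $C^{\infty}$ eigenfunctions in the interior and the domain $D$ is regular bounded, $\phi$ may be approximated in the $\mathcal{H}_{2\alpha-d}$ norm by elements of $C_{0}^{\infty}(D)$ (cf. Remark \ref{insp}), so that its Fourier transform $\widehat{\phi}$ is well-defined in the relevant weighted space. The eigen-equation $\int_{D}\|\mathbf{x}-\mathbf{y}\|^{-\alpha}\phi(\mathbf{y})\,d\mathbf{y}=\mu|D|\,\phi(\mathbf{x})$, when Fourier-transformed using Lemma \ref{l1s117}(i) with $\beta=d-\alpha$, yields a relation of the form $\gamma(\alpha)\,\|\boldsymbol{\lambda}\|^{-(d-\alpha)}\ast$-type convolution identity whose transcription in $L^{2}_{E}(\mathbb{R}^{d},G_{\alpha})$ reads $c(d,\alpha)\int_{\mathbb{R}^{d}}K(\boldsymbol{\lambda}_{1}+\boldsymbol{\lambda}_{2},D)\,g(\boldsymbol{\lambda}_{2})\,G_{\alpha}(d\boldsymbol{\lambda}_{2})=\mu\,g(\boldsymbol{\lambda}_{1})$ for the appropriately defined transform $g$ of $\phi$ (note $c(d,\alpha)=1/\gamma(\alpha)$, which supplies exactly the missing normalising constant). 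Hence $g$ is an eigenfunction of $\mathcal{A}_{\alpha}$ with the same eigenvalue $\mu$. Running the argument in reverse — starting from an eigenfunction of $\mathcal{A}_{\alpha}$ and using that the Fourier transform is onto the relevant space (again by density of $C_{0}^{\infty}(D)$ and the automorphism property on $\mathcal{S}(\mathbb{R}^{d})$) — gives the converse inclusion of spectra. Since both operators are compact self-adjoint (for $\mathcal{A}_{\alpha}$ this is the Hilbert--Schmidt property already noted for $\mathcal{H}$ in Corollary \ref{corr}, as $\mathcal{A}_{\alpha}=c(d,\alpha)\mathcal{H}$), equality of the eigenvalue sequences with multiplicities follows.

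The main obstacle will be the careful justification of the Fourier-side manipulations: the eigenfunctions of $\mathcal{K}_{\alpha}$ need not be Schwartz functions or even compactly supported, so one must legitimately extend the identities (\ref{cfsf2}) and Lemma \ref{l1s117}, stated for $f\in\mathcal{S}(\mathbb{R}^{d})$, to the closure $\mathcal{H}_{2\alpha-d}=\overline{C_{0}^{\infty}(D)}^{\|\cdot\|_{(-\Delta)^{\alpha-d/2}}}$, and to check that the map $\phi\mapsto g$ is a well-defined bijection between the relevant Hilbert spaces intertwining the two operators (in particular that it preserves the ``Hermitian/real'' structure defining $L^{2}_{E}$). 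This is handled by a density/continuity argument: the bound $\|f\|_{(-\Delta)^{\alpha-d/2}}\le C\|f\|_{L^{2}(\mathbb{R}^{d})}$ from Remark \ref{insp} together with the isometry in (\ref{eqrkdef}) shows that the Fourier transform extends continuously, and the eigen-relations pass to the limit because $\mathcal{K}_{\alpha}$ and $\mathcal{A}_{\alpha}$ are bounded on their respective spaces. Once this functional-analytic bookkeeping is in place, the proposition is an immediate consequence of the kernel factorisation already recorded in (\ref{cfsf})--(\ref{cfsf2}) and Theorem \ref{par}(i).
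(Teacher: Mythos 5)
Your plan is correct and follows essentially the same route as the paper's Appendix~A proof: both arguments intertwine $|D|^{-1}\mathcal{K}_{\alpha}$ and $\mathcal{A}_{\alpha}$ via Fourier conjugation, using Lemma~\ref{l1s117}(i) for the Riesz kernel (which is exactly where the constant $c(d,\alpha)=1/\gamma(\alpha)$ comes from) together with the fact that the (inverse) Fourier transform of the kernel $H_{1}=K(\cdot,D)$ is $|D|^{-1}\mathbf{1}_{D}$. The differences are only in bookkeeping: you start from an eigenpair of $\mathcal{K}_{\alpha}$ and transform forward, whereas the paper starts from an eigenpair $(\mu,h)$ of $\mathcal{A}_{\alpha}$ and applies $\mathcal{F}^{-1}$ (so that the support condition $\mathrm{supp}\,\mathcal{F}^{-1}(h)\subseteq D$ is obtained as an output of the computation rather than assumed), and the paper justifies the convolution manipulations by splitting $\|\mathbf{y}\|^{-d+\alpha}h(\mathbf{y})$ into an $L^{1}$ piece near the origin and an $L^{2}$ piece at infinity instead of your density-and-continuity argument.
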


The proof of this result is also given in Appendix A. (See \cite{Veillette}, for the case $d=1$).

\begin{corollary}
\label{lfes} \textit{Let} $\{ \lambda_{k}(S_{\infty}),\ k\geq 1\}$
\textit{be the eigenvalues appearing in  representation
(\ref{eqlsum}),} \textit{arranged into a decreasing order of the magnitudes of their
modulus.} \textit{Then,  Theorem \ref{WaeK} holds for
this system of eigenvalues.}
\end{corollary}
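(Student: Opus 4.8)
The plan is to show that the sequence $\{\lambda_{k}(S_{\infty})\}_{k\ge1}$ is literally the eigenvalue sequence of $\mathcal{K}_{\alpha}$ on $L^{2}(D)$, after which the statement is an immediate restatement of Theorem \ref{WaeK}. The first step is to chase constants through the two preceding results. By Corollary \ref{corr}, the weights in the representation (\ref{eqlsum}) are $\lambda_{\mathbf{n}}(S_{\infty})=c(d,\alpha)|D|\,\mu_{\mathbf{n}}(\mathcal{H})$, where the $\mu_{\mathbf{n}}(\mathcal{H})$ are the eigenvalues of the Hilbert--Schmidt operator $\mathcal{H}$ on $L^{2}_{E}(\mathbb{R}^{d},G_{\alpha})$ defined in (\ref{hso}). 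On the other hand, the operator $\mathcal{A}_{\alpha}$ of Proposition \ref{twooperators} is exactly $c(d,\alpha)\mathcal{H}$, so $\mu_{\mathbf{n}}(\mathcal{A}_{\alpha})=c(d,\alpha)\mu_{\mathbf{n}}(\mathcal{H})$; and Proposition \ref{twooperators} asserts that $\mathcal{A}_{\alpha}$ and $|D|^{-1}\mathcal{K}_{\alpha}$ have the same eigenvalues, i.e. $c(d,\alpha)\mu_{\mathbf{n}}(\mathcal{H})=|D|^{-1}\lambda_{\mathbf{n}}(\mathcal{K}_{\alpha})$ once both families are listed in a common order. Combining the two identities, the factors $c(d,\alpha)$ and $|D|$ cancel and I obtain $\lambda_{\mathbf{n}}(S_{\infty})=\lambda_{\mathbf{n}}(\mathcal{K}_{\alpha})$ for every index.

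Next I would deal with indexing and ordering. The kernel $\|\mathbf{x}-\mathbf{y}\|^{-\alpha}$, $0<\alpha<d$, of $\mathcal{K}_{\alpha}$ is positive definite, since by Lemma \ref{l1s117}(i) its Fourier transform is a positive multiple of $\|\cdot\|^{-(d-\alpha)}$; hence $\mathcal{K}_{\alpha}$ is a positive compact operator and every $\lambda_{k}(\mathcal{K}_{\alpha})$ is strictly positive, in accordance with the non-negativity of the $\mu_{\mathbf{n}}(\mathcal{H})$ recorded in Corollary \ref{corr}. Therefore $\lambda_{\mathbf{n}}(S_{\infty})\ge0$, so that arranging $\{\lambda_{k}(S_{\infty})\}$ ``in decreasing order of the magnitude of the modulus'', as in the statement, is the same as the usual decreasing arrangement; and under any bijection reindexing $\mathbb{N}^{d}_{\ast}$ by $\mathbb{N}_{\ast}$, this coincides (counting multiplicities) with the arrangement $\lambda_{1}(\mathcal{K}_{\alpha})\ge\lambda_{2}(\mathcal{K}_{\alpha})\ge\cdots$ used in Theorem \ref{WaeK}. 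Thus $\lambda_{k}(S_{\infty})=\lambda_{k}(\mathcal{K}_{\alpha})$ for all $k\ge1$.

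Finally, Theorem \ref{WaeK} applied to $\mathcal{K}_{\alpha}$ on $L^{2}(D)$ gives at once
$$
\lim_{k\to\infty}\frac{\lambda_{k}(S_{\infty})}{k^{-(d-\alpha)/d}}=\widetilde{c}(d,\alpha)\,|D|^{(d-\alpha)/d},
$$
with $\widetilde{c}(d,\alpha)$ as in (\ref{eqctilde}), which is precisely the assertion of the corollary.

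I expect the only genuinely delicate point to be the constant bookkeeping in the first paragraph: one has to make sure that Proposition \ref{twooperators} is being used as an equality of eigenvalue \emph{sequences} (with multiplicities), not merely of the underlying sets, and that the factor $c(d,\alpha)$ relating $\mathcal{A}_{\alpha}$ to $\mathcal{H}$ cancels exactly against the $c(d,\alpha)$ in the representation of Corollary \ref{corr}, so that no residual normalizing constant survives in $\lambda_{k}(S_{\infty})=\lambda_{k}(\mathcal{K}_{\alpha})$. Once this is verified, the rest is a direct appeal to Theorem \ref{WaeK}.
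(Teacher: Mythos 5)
Your proposal is correct and follows exactly the route the paper intends: its proof of this corollary is the single sentence that it ``directly follows from Corollary \ref{corr}, Proposition \ref{twooperators} and Theorem \ref{WaeK},'' and your argument is just the explicit version of that chain, with the constant cancellation $\lambda_{\mathbf{n}}(S_{\infty})=c(d,\alpha)|D|\,\mu_{\mathbf{n}}(\mathcal{H})=|D|\cdot|D|^{-1}\lambda_{\mathbf{n}}(\mathcal{K}_{\alpha})$ worked out correctly. The remarks on positivity of $\mathcal{K}_{\alpha}$ and on matching multiplicities are sensible added care that the paper leaves implicit.
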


The proof directly follows from Corollary \ref{corr}, Proposition \ref{twooperators} and Theorem \ref{WaeK}.

\section{Properties of Rosenblatt-type distribution}

\label{sec7} This section provides the L\'evy-Khintchine representation of the limit random variable $%
S_{\infty}$ (see \cite{Veillette}, for $d=1,$ in the discrete time
case), as well as its membership to a subclass of selfdecomposable
distributions, given by the Thorin class. The absolute continuity of the law
of $S_{\infty },$ and the boundedness of its probability density is then
obtained.

It is well-known that the distribution of a random variable $X$ is
infinitely divisible if for any integer $n\geq 1,$ there exist $X_{j}^{(n)},$
$j=1,2,\dots ,n,$ independent and identically distributed (i.i.d.) random
variables such that $X\underset{d}{=}X_{1}^{(n)}+\dots +X_{n}^{(n)}.$ Let $%
\mathcal{ID}(\mathbb{R})$ be the class of infinitely divisible
distributions or random variables. Recall that the cumulant function
of an infinitely divisible random variable $X$ admits the
L{\'e}vy-Khintchine representation

\begin{equation}
\log \mathrm{E}\left[ \exp \left( \mathrm{i}\theta X\right) \right] =\mathrm{%
i}a\theta -\frac{b}{2}\theta ^{2}+\int_{-\infty }^{\infty }(\exp (\mathrm{i}%
\theta u)-1-\mathrm{i}\tau (u)\theta )\mu (du),\quad \theta \in \mathbb{R},
\label{eqlkr}
\end{equation}%
\noindent where $a\in \mathbb{R},$ $b\geq 0,$ and
\begin{equation}
\tau (u)=\left\{
\begin{array}{l}
u\quad |u|\leq 1 \\
\frac{u}{|u|}\quad |u|>1,%
\end{array}%
\right.  \label{eqtau}
\end{equation}%
\noindent and where the L{\'e}vy measure $\mu $ is a Radon measure on $%
\mathbb{R}\setminus \{0\}$ such that $\mu (\{0\})=0$ and
\begin{equation*}
\int \min (u^{2},1)\mu (du)<\infty .
\end{equation*}%
An infinitely divisible random variable $X$ (or its law) is selfdecomposable if its characteristic function $\phi
(\theta )=E[\mathrm{i}\theta X],$ $\theta \in \mathbb{R},$ has the property that for every $c\in (0,1)$ there exists a
characteristic function $\phi _{c}$ such that $\phi (\theta )=\phi (c\theta )\phi _{c}(\theta ),$ $\theta \in
\mathbb{R}.$ It is known (see \cite{Sato}, p.95, Corollary 15.11) that an infinitely divisible law is selfdecomposable
if its L{\'e}vy measure has a density $q$ satisfying
\begin{equation*}
q(u)=\frac{h(u)}{|u|},\quad u\in \mathbb{R},
\end{equation*}%
\noindent with $h(u)$ being increasing on $(-\infty ,0)$ and decreasing on $%
(0,\infty ).$ Let $\mathcal{SD}(\mathbb{R})$ be the class of
selfdecomposable distributions or random variables. If $Y\in
\mathcal{SD}(\mathbb{R})$ then (see \cite{Jurek})
\begin{equation}
Y\underset{d}{=}\int_{0}^{\infty }\exp (-s)dZ(s)\underset{d}{=}%
\int_{0}^{\infty }\exp (-s\lambda )dZ(s\lambda ),\quad \lambda >0,
\label{LP}
\end{equation}%
\noindent where $\{Z(t),\ t\geq 0\}$ is a L{\'e}vy process whose law is
determined by that of $Y.$

We next define the Thorin class on $\mathbb{R}$ (see \cite{Thorin};
\cite{Barndorff}; \cite{James}) as follows:
We refer to $\gamma x$ as an \emph{elementary gamma random variable} if $x$
is nonrandom non-zero vector in $\mathbb{R},$ and $\gamma $ is a gamma
random variable on $\mathbb{R}_{+}.$ Then, the Thorin class on $\mathbb{R}$
(or the class of extended generalized gamma convolutions), denoted by $T(%
\mathbb{R}),$ is defined as the smallest class of distributions that
contains all elementary gamma distributions on $\mathbb{R},$ and is closed
under convolution and weak convergence. It is known that $T(\mathbb{R}%
)\subset \mathcal{SD}(\mathbb{R})\subset \mathcal{ID}(\mathbb{R}),$ and
inclusions are strict. Since any selfdecomposable distribution on $\mathbb{R}
$ is absolutely continuous (see, for instance, Example 27.8 in \cite{Sato})
and is unimodal (by \cite{Yamazato}; see also Theorem 53.1 in \cite{Sato}),
then, any selfdecomposable distribution has a bounded density function.

If a probability distribution function $F$ belongs to $T(\mathbb{R}),$ then,
its characteristic function has the form (see \cite{Thorin},
\cite{Barndorff})

\begin{equation}
\phi (\theta )=\exp \left( \mathrm{i}\theta a-\frac{b\theta ^{2}}{2}-\int_{%
\mathbb{R}}\left[ \log \left( 1-\frac{\mathrm{i}\theta }{u}\right) +\frac{%
\mathrm{i}u\theta }{1+u^{2}}\right] U(du)\right) ,  \label{mgf}
\end{equation}%
\noindent where $a\in \mathbb{R},$ $b\geq 0,$ and $U(du)$ is a
non-decreasing measure on $\mathbb{R}\backslash \{0\},$ called Thorin
measure, such that
\begin{equation*}
U(0)=0,\quad \int_{-1}^{1}\left\vert \log |u|\right\vert U(du)<\infty ,\quad
\int_{-\infty }^{-1}\frac{1}{u^{2}}U(du)+\int_{1}^{\infty }\frac{1}{u^{2}}%
U(du)<\infty .
\end{equation*}

The L\'evy density of a distribution from Thorin class is such that
\begin{equation}
|u|q(u)=\left\{
\begin{array}{l}
\int_{0}^{\infty }\exp(-yu)U(dy),\quad u>0 \\
\\
\int_{0}^{\infty }\exp(yu)U(dy),\quad u<0, \\
\end{array}%
\right.  \label{thorinclass}
\end{equation}
\noindent where $U(du)$ is the Thorin measure. In other words, the L\'evy
density is of the form $h(|u|)/|u|,$ where $h(|u|)=h_{0}(r),$ $r\geq 0,$ is
a completely monotone function over $(0,\infty ).$

The following result establishes the L\'evy-Khintchine representation of $%
S_{\infty },$ as well as the asymptotic orders at zero and at infinity of
its associated L\'evy density. The membership to the Thorin
self-decomposable subclass is then obtained. As a direct consequence, we
then have the existence and boundedness of the probability density of $%
S_{\infty }$ (see, for instance, Example 27.8 in \cite{Sato}).

\begin{theorem}
\label{th1} Let $S_{\infty }$ be given as in Theorem \ref{pr1} with $%
0<\alpha <d/2.$ Let us consider $\lambda_{k}(S_{\infty }),$ $k\geq 1,$ the
sequence of eigenvalues introduced in Corollary \ref{corr} satisfying the
properties stated in Theorem \ref{WaeK} (see Corollary \ref{lfes}). Then,

\begin{itemize}
\item[(i)] $S_{\infty }\in \mathcal{ID}(\mathbb{R})$ with the following L{\'e%
}vy-Khintchine representation:
\begin{equation}
\phi (\theta )=E[\mathrm{i}\theta S_{\infty }]=\exp \left(
\int_{0}^{\infty }\left( \exp (\mathrm{i}u\theta
)-1-\mathrm{i}u\theta \right) \mu _{\alpha /d}(du)\right),
\label{eq45}
\end{equation}%
\noindent where $\mu _{\alpha /d}$ is supported on $(0,\infty )$ having
density
\begin{equation}
q_{\alpha /d}(u)=\frac{1}{2u}\sum_{k=1}^{\infty }\exp \left( -\frac{u}{%
2\lambda _{k}(S_{\infty })}\right) ,\quad u>0.  \label{levydensity}
\end{equation}%
Furthermore, $q_{\alpha /d}$ has the following asymptotics as $%
u\longrightarrow 0^{+}$ and $u\longrightarrow \infty ,$
\begin{eqnarray}
&&q_{\alpha /d}(u)\sim \frac{[\widetilde{c}(d,\alpha )|D
|^{(d-\alpha )/d}]^{1/(1-\alpha /d)}\Gamma \left( \frac{1}{1-\alpha
/d}\right)
\left( \frac{u}{2}\right) ^{-1/(1-\alpha /d)}}{2u[(1-\alpha /d)]}  \notag \\
&=&\frac{2^{\frac{\alpha /d}{1-\alpha /d}}[\widetilde{c}(d,\alpha )|D |^{(d-\alpha )/d}]^{1/(1-\alpha /d)}\Gamma \left( \frac{1}{1-\alpha /d}%
\right) u^{\frac{(\alpha /d)-2}{(1-\alpha /d)}}}{[(1-\alpha
/d)]}\quad \mbox{as}\
u\longrightarrow 0^{+},  \notag \\
&&q_{\alpha /d}(u)\sim \frac{1}{2u}\exp (-u/2\lambda _{1}(S_{\infty
})),\quad \mbox{as}\ u\longrightarrow \infty ,  \label{eqDCT2th}
\end{eqnarray}%
\noindent where $\widetilde{c}(d,\alpha )$ is defined as in equation (\ref%
{eqctilde}).

\item[(ii)] $S_{\infty }\in \mathcal{SD}(\mathbb{R}),$ and hence it has a
bounded density.

\item[(iii)] $S_{\infty }\in T(\mathbb{R}),$ with Thorin measure given by
\begin{equation*}
U(dx)=\frac{1}{2}\sum_{k=1}^{\infty }\delta_{\frac{1}{2\lambda_{k}(S_{\infty
})}}(x),
\end{equation*}
\noindent where $\delta_{a}(x)$ is the Dirac delta-function at point $a.$

\item[(iv)] $S_{\infty }$ admits the integral representation
\begin{equation}
S_{\infty}\underset{d}{=}\int_{0}^{\infty } \exp\left(-u\right)d\left(
\sum_{k=1}^{\infty }\lambda_{k}(S_{\infty })A^{(k)}(u)\right)\underset{d}{=}%
\int_{0}^{\infty }\exp\left(-u\right)dZ(u),  \label{Thclass}
\end{equation}
\noindent where
\begin{equation}
Z(t)=\sum_{k=1}^{\infty }\lambda_{k}(S_{\infty })A^{(k)}(t),\quad t\geq 0,
\label{Thclass2}
\end{equation}
\noindent with $A^{(k)},$ $k\geq 1,$ being independent copies of a L\'evy
process.
\end{itemize}
\end{theorem}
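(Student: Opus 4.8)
The plan is to build everything on the series representation of $S_\infty$ from Corollary \ref{corr}, namely $S_\infty\,\underset{d}{=}\,\sum_{k\ge 1}\lambda_k(S_\infty)(\varepsilon_k^2-1)$ with $\varepsilon_k$ i.i.d.\ standard Gaussian and $\lambda_k:=\lambda_k(S_\infty)\ge 0$ arranged decreasingly, viewing $S_\infty$ as an independent, $L^2$-convergent sum of centred chi-squared variables. Both the $L^2$-convergence and the key summability $\sum_k\lambda_k^2<\infty$ follow from Corollary \ref{lfes} and Theorem \ref{WaeK}, which give $\lambda_k\sim\widetilde{c}(d,\alpha)|D|^{(d-\alpha)/d}k^{-(d-\alpha)/d}$ with exponent $(d-\alpha)/d>1/2$ precisely because $\alpha<d/2$ (note $\sum_k\lambda_k=\infty$, which is why the centring $-1$ is essential). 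Since $\varepsilon_k^2\sim\chi^2_1=\mathrm{Gamma}(1/2,1/2)$, the variable $\lambda_k\varepsilon_k^2$ is $\mathrm{Gamma}(1/2,1/(2\lambda_k))$, infinitely divisible with gamma Lévy measure $\frac{1}{2u}e^{-u/(2\lambda_k)}\,du$ on $(0,\infty)$; subtracting the constant $\lambda_k$ only shifts the drift, and since $\int_0^\infty u\cdot\frac{1}{2u}e^{-u/(2\lambda_k)}\,du=\lambda_k<\infty$ we obtain the fully compensated form $\mathrm{E}[e^{\mathrm{i}\theta\lambda_k(\varepsilon_k^2-1)}]=\exp\!\left(\int_0^\infty(e^{\mathrm{i}\theta u}-1-\mathrm{i}\theta u)\frac{1}{2u}e^{-u/(2\lambda_k)}\,du\right)$, which one may also check directly, the integral being a Frullani integral equal to $-\frac12\log(1-2\mathrm{i}\theta\lambda_k)-\mathrm{i}\theta\lambda_k$. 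Multiplying over $k$ and using the elementary bound $|e^{\mathrm{i}\theta u}-1-\mathrm{i}\theta u|\le\frac12\theta^2 u^2$ together with $\sum_k\int_0^\infty\frac12\theta^2 u^2\cdot\frac{1}{2u}e^{-u/(2\lambda_k)}\,du=\theta^2\sum_k\lambda_k^2<\infty$, Fubini's theorem allows interchanging sum and integral, producing the Lévy--Khintchine representation (\ref{eq45}) with Lévy density $q_{\alpha/d}(u)=\frac{1}{2u}\sum_{k\ge 1}e^{-u/(2\lambda_k)}$ as in (\ref{levydensity}); the same computation gives $\int_0^\infty u^2 q_{\alpha/d}(u)\,du=2\sum_k\lambda_k^2<\infty$, so $\mu_{\alpha/d}$ is a genuine (indeed finite-variance) Lévy measure. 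This settles (i) apart from the tail behaviour.

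For the asymptotics of $q_{\alpha/d}$: as $u\to\infty$, factoring out the largest weight, $q_{\alpha/d}(u)=\frac{1}{2u}e^{-u/(2\lambda_1)}\bigl(1+\sum_{k\ge 2}e^{-u(1/(2\lambda_k)-1/(2\lambda_1))}\bigr)$ and the bracket tends to $1$ by dominated convergence (the correction series is dominated uniformly for $u\ge 1$ because $1/(2\lambda_k)$ grows polynomially in $k$), giving $q_{\alpha/d}(u)\sim\frac{1}{2u}e^{-u/(2\lambda_1)}$. As $u\to 0^+$ I would apply the Karamata Tauberian theorem: with $x_k:=1/(2\lambda_k)\uparrow\infty$ and $N(x):=\#\{k:x_k\le x\}$, the eigenvalue asymptotics of Corollary \ref{lfes} give $N(x)\sim[2\widetilde{c}(d,\alpha)|D|^{(d-\alpha)/d}]^{d/(d-\alpha)}x^{d/(d-\alpha)}$, whence $\sum_k e^{-u/(2\lambda_k)}=\int_{(0,\infty)}e^{-ux}\,dN(x)\sim\Gamma\!\left(1+\frac{1}{1-\alpha/d}\right)[2\widetilde{c}(d,\alpha)|D|^{(d-\alpha)/d}]^{1/(1-\alpha/d)}u^{-1/(1-\alpha/d)}$; dividing by $2u$ and rewriting via $\rho\Gamma(\rho)=\Gamma(\rho+1)$ with $\rho=1/(1-\alpha/d)$ yields exactly (\ref{eqDCT2th}). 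Tracking the constant cleanly through the $\Gamma$-function and the powers of $2$ is the one genuinely delicate point of the whole argument.

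Parts (ii) and (iii) then follow by inspecting $q_{\alpha/d}$. Write $q_{\alpha/d}(u)=h(u)/|u|$ with $h(u)=\frac12\sum_{k\ge 1}e^{-u/(2\lambda_k)}$ for $u>0$ and $h\equiv 0$ on $(-\infty,0)$; then $h$ is decreasing on $(0,\infty)$, so by the criterion recalled in the text (\cite{Sato}, Corollary 15.11) $S_\infty\in\mathcal{SD}(\mathbb{R})$, and since every self-decomposable law is absolutely continuous and unimodal it has a bounded density. Moreover $h(u)=\int_0^\infty e^{-uy}\,U(dy)$ with $U(dy)=\frac12\sum_{k\ge 1}\delta_{1/(2\lambda_k)}(dy)$, so $|u|q_{\alpha/d}(u)=\int_0^\infty e^{-yu}\,U(dy)$ and $h$ is completely monotone; comparing with (\ref{thorinclass})--(\ref{mgf}) identifies $S_\infty\in T(\mathbb{R})$ with Thorin measure $U$ as stated, once one checks the admissibility conditions ($\int_{-1}^{1}\left|\log|x|\right|\,U(dx)<\infty$ since only finitely many $1/(2\lambda_k)\le 1$, and $\int_1^\infty x^{-2}\,U(dx)\le 2\sum_k\lambda_k^2<\infty$).

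Finally, for (iv) I would invoke the background-driving-Lévy-process representation (\ref{LP}) of self-decomposable laws. For a single summand, $\lambda_k(\varepsilon_k^2-1)\,\underset{d}{=}\,\int_0^\infty e^{-u}\,d(\lambda_k A^{(k)})(u)$, where the $A^{(k)}$ are i.i.d.\ copies of the Lévy process $A$ whose Lévy measure is $\frac14 e^{-v/2}\,dv$ on $(0,\infty)$ and which has zero mean and finite variance; indeed $A$ is the BDLP of the centred $\chi^2_1$ variable, recovered as $-k_1'(u)$ from $k_1(u)=\frac12 e^{-u/2}$ and then rescaled by $\lambda_k$. Summing over $k$, and noting that $Z(u):=\sum_{k\ge 1}\lambda_k A^{(k)}(u)$ converges in $L^2$ (because $A$ has zero mean, $\mathrm{Var}(A(1))<\infty$ and $\sum_k\lambda_k^2<\infty$) and is therefore a well-defined Lévy process, the interchange of $\sum_k$ and $\int_0^\infty e^{-u}\,d(\cdot)$ being justified by this $L^2$-convergence, one gets $S_\infty\,\underset{d}{=}\,\int_0^\infty e^{-u}\,dZ(u)$, i.e.\ (\ref{Thclass})--(\ref{Thclass2}). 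The main obstacle in the whole proof is thus the $u\to 0^+$ Tauberian estimate with its explicit constant; the rest is bookkeeping on top of Corollary \ref{corr} and the standard theory of self-decomposable and Thorin-class distributions.
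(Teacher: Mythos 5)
Your proposal is correct and follows the same overall architecture as the paper's proof: start from the series representation of Corollary \ref{corr}, write the L\'evy--Khintchine form of each centred chi-squared summand, pass to the limit to get (\ref{eq45})--(\ref{levydensity}), read off self-decomposability and the Thorin measure from the shape of $q_{\alpha/d}$, and finish with the background-driving-L\'evy-process representation. Three of your technical steps differ from the paper's in ways worth noting. First, to justify the interchange of $\sum_k$ and $\int_0^\infty$ you use Fubini with the bound $\sum_k\int_0^\infty \frac{1}{2}\theta^2u^2\cdot\frac{1}{2u}e^{-u/(2\lambda_k)}\,du=\theta^2\sum_k\lambda_k^2<\infty$; the paper instead applies dominated convergence to the truncated sums $S_\infty^{(M)}$, with a dominating function $\frac{\theta^2}{4}uG_{\lambda(\alpha/d)}(e^{-u/2})$ whose integrability near $0$ must itself be checked via the eigenvalue asymptotics. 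Your route is cleaner, since it decouples the existence of the representation from the small-$u$ asymptotics. Second, for the $u\to 0^+$ behaviour you use the (Abelian direction of the) Karamata theorem on the eigenvalue counting function $N(x)$, whereas the paper invokes Lemma \ref{lem1} (Lemma 4.1 of \cite{Veillette}) on $G_c(x)=\sum_k x^{c_k}$ as $x\to 1$ with $x=e^{-u/2}$; these are equivalent, and your constant-tracking ($\rho\Gamma(\rho)=\Gamma(\rho+1)$, the factor $2^{\rho}$ from $(u/2)^{-\rho}$) reproduces (\ref{eqDCT2th}) exactly. Third, for (iv) you identify the BDLP abstractly via $-h'(u)=\frac{1}{4}e^{-u/2}$, while the paper constructs it explicitly as $A(t)=\gamma_{1/2}(N(t/2))-t$; these are the same compound Poisson process (jump rate $1/2$, exponential jumps of rate $1/2$), so the two descriptions agree. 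One shared caveat, not a defect of your argument relative to the paper's: the asymptotic $q_{\alpha/d}(u)\sim\frac{1}{2u}e^{-u/(2\lambda_1(S_\infty))}$ as $u\to\infty$ implicitly assumes the top eigenvalue is simple; if $\lambda_1$ has multiplicity $m>1$ the constant becomes $m/(2u)$, and both your factorization and the paper's statement carry this tacit assumption.
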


\begin{proof}
\noindent (i) The proof  follows from Theorem \ref{WaeK},
equation (\ref{eqfa}), Corollary \ref{lfes}, and Lemma \ref{lem1}
below (see Appendix B), in a similar way to Theorem 4.2 in \cite{Veillette}. Specifically, let us first consider a truncated
version of the random series representation (\ref{eqlsum})
$$S_{\infty }^{(M)}=\sum_{k=1}^{M}\lambda_{k}(S_{\infty })(\varepsilon_{k}^{2}-1),$$
\noindent with $S_{\infty }^{M}\underset{d}{\longrightarrow
}S_{\infty},$  as $M$ tends to infinity. From the L\'evy-Khintchine
representation of the chi-square distribution (see, for instance,
\cite{Applebaum}, Example 1.3.22),
\begin{eqnarray}
E\left[\exp(\mathrm{i}\theta S_{\infty}^{(M)})\right]&=&\prod_{k=1}^{M}E\left[\exp\left(
\mathrm{i}\theta\lambda_{k}(S_{\infty })(\varepsilon_{k}^{2}-1)
\right)\right]\nonumber\\
&=&\prod_{k=1}^{M}\exp\left( -\mathrm{i}\theta \lambda_{k}(S_{\infty })+\int_{0}^{\infty }(\exp(\mathrm{i}\theta
u)-1)\left[
\frac{\exp\left(-u/(2\lambda_{k}(S_{\infty }))\right)}{2u}\right]du\right)\nonumber\\
&=&\prod_{k=1}^{M}\exp\left(  \int_{0}^{\infty }(\exp(\mathrm{i}\theta u)-1-\mathrm{i}\theta
u)\left[\frac{\exp(-u/2\lambda_{k}(S_{\infty }))}{2u}\right]du\right)\nonumber\\
&=&\exp\left(  \int_{0}^{\infty }(\exp(\mathrm{i}\theta u)-1-\mathrm{i}\theta u)\left[\frac{1}{2u}G_{\lambda (\alpha /d
)}^{(M)}\left(\exp(-u/2)\right)\right]du\right), \label{integrand}
\end{eqnarray}
\noindent where $G_{\lambda (\alpha
/d)}^{(M)}(x)=\sum_{k=1}^{M}x^{[\lambda_{k}(S_{\infty })]^{-1}}.$ To
apply the Dominated Convergence Theorem, the following upper bound
is used:
\begin{eqnarray}
\left|(\exp(\mathrm{i}\theta u)-1-\mathrm{i}\theta u)\left[ \frac{1}{2u}G_{\lambda (\alpha /d
)}^{(M)}\left(\exp(-u/2)\right)\right]\right| &\leq & \frac{\theta^{2}}{4}uG_{\lambda (\alpha
/d)}^{(M)}\left(\exp(-u/2)\right) \nonumber\\ &\leq & \frac{\theta^{2}}{4}uG_{\lambda (\alpha /d
)}\left(\exp(-u/2)\right),\nonumber\\ \label{equb}
\end{eqnarray}
\noindent where, as indicated in \cite{Veillette}, we have applied the inequality
$|\exp(\mathrm{i}z)-1-z|\leq \frac{z^{2}}{2},$ for $z\in \mathbb{R}.$ The right-hand side of (\ref{equb}) is
continuous, for $0<u<\infty,$ and from
 Theorem \ref{WaeK}, equation (\ref{eqfa}), Corollary \ref{lfes}, and Lemma \ref{lem1} in Appendix B, we obtain
\begin{eqnarray}
uG_{\lambda (\alpha /d)}\left(\exp(-u/2)\right) &\sim &
u\exp(-u/2\lambda_{1}(S_{\infty })),\quad \mbox{as}\
u\longrightarrow
\infty\nonumber\\
 uG_{\lambda (\alpha /d
)}\left(\exp(-u/2)\right) &\sim & [\widetilde{c}(d,\alpha )|D
|^{1-\alpha /d}]^{1/1-\alpha /d}\frac{u}{(1-\alpha /d)}
\nonumber\\
& &\Gamma \left( \frac{1}{1-\alpha /d
}\right)(1-\exp(-u/2))^{-1/(1-\alpha /d)} \nonumber\\
& & \sim Cu^{-\frac{\alpha /d}{1-\alpha /d}}\quad \mbox{as}\
u\longrightarrow 0, \label{asympt}
\end{eqnarray}
\noindent for some constant $C.$ Since $0<\frac{\alpha /d}{1-\alpha
/d}<1,$ equation (\ref{asympt}) implies that the right-hand side of
(\ref{equb}), which  does not depend on $M,$  is integrable on
$(0,\infty ).$ Hence, by the Dominated Convergence Theorem, as
$M\rightarrow \infty,$
\begin{eqnarray}
& & E\left[\exp(\mathrm{i}\theta S_{\infty}^{(M)})\right]\longrightarrow E\left[\exp(\mathrm{i}\theta S_{\infty})\right] \nonumber\\
& &=\exp\left( \int_{0}^{\infty }(\exp(\mathrm{i}\theta u)-1-\mathrm{i}\theta u)\left[\frac{1}{2u}G_{\lambda (\alpha /d
)}\left(\exp(-u/2)\right)\right]du\right), \label{eqDCT}
\end{eqnarray}
 \noindent which proves that equations
(\ref{eq45}) and (\ref{levydensity}) hold.

Again, from  Theorem \ref{WaeK}, equation (\ref{eqfa}), Corollary
\ref{lfes}, and Lemma \ref{lem1} below,

\begin{eqnarray}
& & \frac{1}{2u}G_{\lambda (\alpha /d)}\left(\exp(-u/2)\right)\sim
[\widetilde{c}(d,\alpha )|D |^{1-\alpha /d}]^{1/(1-\alpha
/d)}\frac{\Gamma \left( \frac{1}{1-\alpha /d
}\right)\left(\frac{u}{2}\right)^{-1/(1-\alpha /d)}}{2u[(1-\alpha
/d)]}\nonumber\\
& &=  \frac{2^{\frac{\alpha /d}{1-\alpha /d}}[\widetilde{c}(d,\alpha
)|D |^{1-\alpha /d}]^{1/(1-\alpha /d)}\Gamma \left(
\frac{1}{1-\alpha /d }\right)u^{\frac{(\alpha /d)-2}{(1-\alpha
/d)}}}{[(1-\alpha /d)]}\quad \quad \mbox{as}\
u\longrightarrow 0\nonumber\\
& & \frac{1}{2u}G_{\lambda (\alpha /d)}\left(\exp(-u/2)\right)\sim
\frac{1}{2u}\exp(-u/2\lambda_{1}(S_{\infty }))\quad \mbox{as}\
u\longrightarrow \infty. \label{eqDCT2}
\end{eqnarray}

Thus, equation (\ref{eqDCT2}) provides the  asymptotic orders given
in (\ref{eqDCT2th}).

\medskip

\noindent (ii) From (i), it follows that  $S_{\infty }\in
\mathcal{SD}(\mathbb{R}),$ and hence it has a bounded density (see
\cite{Bondesson},  Example 27.8 in \cite{Sato}, and \cite{Yamazato}).
Note that an alternative proof of the boundedness of the probability
density of $S_{\infty }$ is provided in Appendix C, where an upper
bound is also obtained.

\medskip

\noindent (iii) In view of (\ref{thorinclass}) and
(\ref{levydensity}), $S_{\infty }\in T(\mathbb{R})$ with Thorin
measure given by \begin{equation}U(dx)=\frac{1}{2}\sum_{k=1}^{\infty
}\delta_{\frac{1}{2\lambda_{k}(S_{\infty })}}(x),\label{im}
\end{equation}
\noindent where $\delta_{a}(x)$ is the Dirac delta-function at point
$a.$ From Theorem \ref{WaeK}, Corollaries \ref{corr} and
Proposition \ref{twooperators} (see also Corollary \ref{lfes}), the
number of terms in the sum (\ref{im}) is infinite. Hence,  the
Thorin measure $U(dx),$ as a counting measure, has infinite total
mass.
 The form of Thorin measure is a direct
consequence of (\ref{thorinclass}) and (\ref{levydensity}).

\medskip

\noindent (iv) As in \cite{Maejima}, we consider a gamma
subordinator $\gamma_{\lambda }(t),$ $t\geq 0,$ with parameter
$\lambda >0,$ that is, a L\'evy process such that $\gamma_{\lambda
}(0)=0,$ and $P\left\{ \gamma_{\lambda }(t)\in dx\right\}=\lambda
^{-t}\Gamma ^{-1}(t)\exp(-x\lambda )x^{t-1}dx,$ $x>0,$ and a
homogeneous Poisson process $N(t),$ $t\geq 0,$ with unit rate.
Assume that the two processes are independent. Then (see \cite{Aoyama}), for any $c>0,$ and $\lambda >0,$ the
representation (\ref{LP}) (see \cite{Jurek}) can be specified as follows:
$$\gamma_{\lambda }(c)\underset{d}{=}\int_{0}^{\infty }\exp\left(-t\right)d\gamma_{\lambda }(N(ct)).$$
The process $A(t)=\gamma_{1/2}(N(t/2))-t,$ $t\geq 0,$ is a L\'evy
process.

For $k\geq 1,$ let us consider
$\gamma^{(k)}_{\frac{1}{2}}\left(\frac{1}{2}\right)$ and
$A^{(k)}(t)$ to be independent copies of
$\gamma_{\frac{1}{2}}\left(\frac{1}{2}\right)$ and $A(t),$
respectively. Then, we have
$$\varepsilon_{k}^{2}-1\underset{d}{=}\gamma^{(k)}_{\frac{1}{2}}\left(\frac{1}{2}\right)\underset{d}{=}\int_{0}^{\infty }\exp\left(-u\right)
dA^{(k)}(u),$$ \noindent where $\varepsilon_{k}$ are independent and
identically distributed standard normal random variables as given in
 the series expansion (\ref{eqlsum}). Then, for
$\lambda_{k}(S_{\infty }),$ $k\geq 1,$ being the eigenvalues
appearing in such a series expansion,  arranged into a decreasing
order of their magnitudes, we obtain that the distribution of
$S_{\infty }$ admits the integral representation (\ref{Thclass}),
with, $A^{k},$ $k\geq 1,$ in equation (\ref{Thclass2}) being
independent copies of the L\'evy process
$A(t)=\gamma_{1/2}(N(t/2))-t,$ $t\geq 0.$

\end{proof}

For any $0<\alpha /d<1/2,$ the L{\'e}vy measure $\mu _{\alpha /d}$ satisfies
\begin{equation*}
\int_{0}^{\infty }u^{2}\mu _{\alpha /d}(du)=\mathrm{E}[S_{\infty
}^{2}]=[a_{d}(D)]^{2}.
\end{equation*}%
Furthermore, when $\alpha /d\longrightarrow 1/2,$ since $(\exp (\mathrm{i}\theta u)-1-\mathrm{i}\theta u)\rightarrow
(-1/2)\theta ^{2}$ (see \cite{Veillette}), we have
\begin{equation*}
\phi (\theta )=\exp \left( \int_{0}^{\infty }\frac{\exp (\mathrm{i}\theta
u)-1-\mathrm{i}\theta u}{u^{2}}u^{2}\mu _{\alpha /d}(du)\right)
\longrightarrow \exp \left( -\frac{1}{2}\theta ^{2}\right) ,
\end{equation*}%
which means that $S_{\infty }\longrightarrow N(0,1).$

In addition, from Theorem \ref{th1}, it can be proved, in a similar way to
Corollary 4.3 and 4.4  in \cite{Veillette}, that, for $0<\alpha
/d<1/2,$ the probability density function of $S_{\infty }$ is infinitely
differentiable with all derivatives tending to $0$ as $|x|\longrightarrow
\infty.$ Also, the following inequality holds
\begin{equation*}
P[S_{\infty }<-x]\leq \exp\left(-\frac{1}{2}x^{2}\right),\quad x>0.
\end{equation*}
\noindent We also note that, for $\epsilon >0,$
\begin{equation*}
\lim_{u\rightarrow \infty }\frac{P[S_{\infty }>u+\epsilon]}{P[S_{\infty }>u]}%
=\exp\left( -\frac{\epsilon}{2\lambda_{1}(S_{\infty})}\right).
\end{equation*}

\begin{remark}
\textrm{{In view of the integral representation (\ref{Thclass}), one
can construct an Ornstein-Uhlenbeck type process
\begin{equation*}
dS(t)=-\lambda S(t)+d L(\lambda t),\quad t\geq 0,\quad \lambda >0,
\end{equation*}
\noindent driven by a L\'evy process $L(t),$ $t\geq 0,$ and with
marginal Rosenblatt distribution $S_{\infty }.$ The driving process
$L(t)$ is referred to as
the background  L\'evy process, and it is introduced in (\ref%
{Thclass2}).} }
\end{remark}

\section{Appendices}

\section*{Appendix A}

\subsection*{Proof of Corollary \protect\ref{corr}}

From condition (\ref{6.7.1}),  operator $\mathcal{H}$ is a
Hilbert-Schmidt operator from $L^{2}_{E}(\mathbb{R}^{d},G_{\alpha
})$ into $L^{2}_{E}(\mathbb{R}^{d},G_{\alpha }),$ which
admits a spectral decomposition, in terms of a sequence of eigenvalues $%
\{\mu_{\mathbf{n}}(\mathcal{H}), \ \mathbf{n}\in
\mathbb{N}^{d}_{*}\},$ and a complete orthonormal system of
eigenvectors $\{\varphi_{\mathbf{n}},\ \mathbf{n}\in
\mathbb{N}^{d}_{*}\}$ of $L^{2}_{E}(\mathbb{R}^{d},G_{\alpha }),$ as
follows:

\begin{equation}
H_{1}(\mathbf{x}-\mathbf{y})=H(\mathbf{x},\mathbf{y})=\sum_{\mathbf{n}\in \mathbb{N}^{d}_{*}}\mu_{%
\mathbf{n}}(\mathcal{H})\varphi_{\mathbf{n}}(\mathbf{x})\overline{\varphi_{%
\mathbf{n}}(\mathbf{y})},
\label{eqhsoexp}
\end{equation}
\noindent where convergence holds in the
$L^{2}_{E}(\mathbb{R}^{d},G_{\alpha })\otimes
L^{2}_{E}(\mathbb{R}^{d},G_{\alpha })$ sense, i.e.,
\begin{equation}
\left\|H(\mathbf{x},\mathbf{y})-\sum_{\mathbf{n}\in \mathbb{N}^{d}_{*}}\mu_{%
\mathbf{n}}(\mathcal{H})\varphi_{\mathbf{n}}\otimes \overline{\varphi}_{%
\mathbf{n}}\right\|_{L^{2}_{E}(\mathbb{R}^{d},G_{\alpha })\otimes
L^{2}_{E}(\mathbb{R}^{d},G_{\alpha })}^{2}=0. \label{nordiff}
\end{equation}

We can establish the following isometry $\widehat{\mathcal{I}}_{2}$
between the Hilbert space $L^{2}_{E}(\mathbb{R}^{d},G_{\alpha
})\otimes L^{2}_{E}(\mathbb{R}^{d},G_{\alpha }),$ and the two-Wiener
chaos   of the isonormal process $X$ on
$H=L^{2}_{E}(\mathbb{R}^{d},G_{\alpha }),$ given by
\begin{equation}X: h\in  L^{2}_{E}(\mathbb{R}^{d},G_{\alpha })\longrightarrow X(h)=\int_{\mathbb{R}^{d}}^{\prime}h(\mathbf{x})\frac{%
Z\left( d\boldsymbol{x} \right)}{\|\mathbf{x}\|^{(d-\alpha
)/2}}\label{isoprocc} \end{equation} \noindent (see \cite{Peccati}, Chapter 9), considering the following identification
between orthonormal bases of both spaces: For a given orthonormal
basis $\{\varphi_{\mathbf{n}}\otimes
\overline{\varphi_{\mathbf{n}}},\ \mathbf{n}\in
\mathbb{N}^{d}_{*}\}$ of $L^{2}_{E}(\mathbb{R}^{d},G_{\alpha
})\otimes L^{2}_{E}(\mathbb{R}^{d},G_{\alpha }),$ its image by such
an isometry $\widehat{\mathcal{I}}_{2}$ is defined as
\begin{equation}\widehat{\mathcal{I}}_{2}(\varphi_{\mathbf{n}}\otimes \overline{\varphi_{\mathbf{n}}})=\int_{\mathbb{R}^{2d}}^{\prime \prime}\left[\varphi_{\mathbf{n}}(\mathbf{x}%
_{1})\overline{\varphi_{\mathbf{n}}(\mathbf{x}_{2})}\right]\frac{Z\left( d\mathbf{x}%
_{1}\right)}{\|\mathbf{x}_{1}\|^{(d-\alpha)/2}} \frac{Z\left( d\mathbf{x}%
_{2}\right)}{\|\mathbf{x}_{2}\|^{(d-\alpha)/2}},  \label{isometry}
\end{equation}

\noindent which also defines an orthonormal basis in the  two-Wiener
chaos   of the isonormal process $X$ in (\ref{isoprocc}).

From equations (\ref{eqhsoexp})--(\ref{nordiff}), by the
orthonormality of the eigenvector basis $\varphi_{n}\otimes
\overline{\varphi}_{n}$ of $L^{2}_{E}(\mathbb{R}^{d},G_{\alpha
})\otimes L^{2}_{E}(\mathbb{R}^{d},G_{\alpha }),$
\begin{eqnarray}
\left\langle H, \varphi_{\mathbf{k}}\otimes
\overline{\varphi}_{\mathbf{k}}\right\rangle_{L^{2}_{E}(\mathbb{R}^{d},G_{\alpha
})\otimes
L^{2}_{E}(\mathbb{R}^{d},G_{\alpha })}&=&\left\langle\sum_{\mathbf{n}\in \mathbb{N}^{d}_{*}}\mu_{%
\mathbf{n}}(\mathcal{H})\varphi_{\mathbf{n}}\otimes
\overline{\varphi}_{\mathbf{n}},\varphi_{\mathbf{k}}\otimes
\overline{\varphi_{\mathbf{k}}}\right\rangle_{L^{2}_{E}(\mathbb{R}^{d},G_{\alpha
})\otimes
L^{2}_{E}(\mathbb{R}^{d},G_{\alpha })}\nonumber\\
&=& \mu_{%
\mathbf{k}}(\mathcal{H}),\quad \forall \mathbf{k}\in \mathbb{N}^{d}_{*}.
\label{innerproduct}
\end{eqnarray}
Again, from equations (\ref{eqhsoexp})--(\ref{nordiff}), and
(\ref{innerproduct}), considering the isometry
$\widehat{\mathcal{I}}_{2}$ in (\ref{isometry})
\begin{eqnarray}
& & \int_{\mathbb{R}^{2d}}^{\prime \prime}H(\mathbf{x}_{1},\mathbf{x}_{2})
\frac{Z\left( d\mathbf{x}_{1}\right)}{\|\mathbf{x}_{1}\|^{(d-\alpha)/2}}
\frac{Z\left( d\mathbf{x}_{2}\right)}{\|\mathbf{x}_{2}\|^{(d-\alpha)/2}}
\notag \\
&=&\sum_{\mathbf{n}\in \mathbb{N}^{d}_{*}}\mu_{\mathbf{n}}(\mathcal{H})\int_{%
\mathbb{R}^{2d}}^{\prime \prime}\left[\varphi_{\mathbf{n}}(\mathbf{x}%
_{1})\overline{\varphi_{\mathbf{n}}(\mathbf{x}_{2})}\right]\frac{Z\left( d\mathbf{x}%
_{1}\right)}{\|\mathbf{x}_{1}\|^{(d-\alpha)/2}} \frac{Z\left( d\mathbf{x}%
_{2}\right)}{\|\mathbf{x}_{2}\|^{(d-\alpha)/2}}  \notag \\
&=&\sum_{\mathbf{n}\in \mathbb{N}^{d}_{*}}\mu_{\mathbf{n}}(\mathcal{H}%
)H_{2}\left(\int_{\mathbb{R}^{d}}\varphi_{\mathbf{n}}(\mathbf{x})\frac{%
Z\left( d\mathbf{x} \right)}{\|\mathbf{x}\|^{(d-\alpha )/2}}\right),
\label{eqwchaos}
\end{eqnarray}
\noindent where $H_{2}$ denotes, as before, the second Hermite
polynomial. Note that  summation and integration can be swapped, in
view of the convergence of the series (\ref{eqhsoexp}) in the space
$L^{2}_{E}(\mathbb{R}^{d},G_{\alpha })\otimes
L^{2}_{E}(\mathbb{R}^{d},G_{\alpha }),$ and the referred isometry
between  $L^{2}_{E}(\mathbb{R}^{d},G_{\alpha })\otimes
L^{2}_{E}(\mathbb{R}^{d},G_{\alpha })$ and the two-Wiener chaos of
isonormal process $X$ introduced in  (\ref{isoprocc}) (see also
equations (\ref{nordiff})--(\ref{innerproduct})).

Note that
\begin{equation*}
\int_{\mathbb{R}^{2d}}^{\prime \prime}\varphi_{\mathbf{n}}(\mathbf{x})\frac{%
Z\left( d\boldsymbol{x} \right)}{\|\mathbf{x}\|^{(d-\alpha )/2}},\quad
\mathbf{n}\in \mathbb{N}^{d}_{*},
\end{equation*}
\noindent are independent zero-mean  Gaussian random variables with  variance $\int_{\mathbb{R}^{2d}}|\varphi_{%
\mathbf{n}}(\mathbf{x})|^{2}G_{\alpha }(d\mathbf{x}),$ due to the orthogonality of the functions $\varphi_{%
\mathbf{n}},$ $\mathbf{n}\in \mathbb{N}^{d}_{*},$ in the space
$L^{2}_{E}(\mathbb{R}^{d},G_{\alpha }).$ From equations
(\ref{6.7.3}) and (\ref{eqwchaos}),
\begin{equation*}
S_{\infty }\ \underset{d}{=} \ c(d,\alpha )|D |\sum_{\mathbf{n}\in \mathbb{N}%
_{*}}\mu_{\mathbf{n}}(\mathcal{H})(\varepsilon_{\mathbf{n}}^{2}-1).
\end{equation*}
\noindent Equation (\ref{eqlsum}) is then obtained by setting $\lambda_{%
\mathbf{n}}(S_{\infty})=c(d,\alpha )|D |\mu_{\mathbf{n}}(\mathcal{H}).$

\subsection*{Proof of Proposition \protect\ref{twooperators}}

Let us consider $\mathcal{F}$ and $\mathcal{F}^{-1}$ the
Fourier and inverse Fourier transforms respectively defined on $L^{1}(\mathbb{R}^{d})$ and $%
L^{2}(\mathbb{R}^{d}).$
  Consider an eigenpair $(\mu ,h)$ of the operator $%
\mathcal{A}_{\alpha},$ we have that $\int_{\mathbb{R}^{d}}|h(\mathbf{y})|^{2}%
\frac{1}{\|\mathbf{y}\|^{d-\alpha }}<\infty. $ Applying the inverse
Fourier  transform $\mathcal{F}$ to both sides of the identity
\begin{equation*}
\mu h=\mathcal{A}_{\alpha }h,
\end{equation*}
we get
\begin{equation*}
\mu \mathcal{F}^{-1}(h)= \mathcal{F}^{-1}(\mathcal{A}_{\alpha
}h)=c(d,\alpha )\mathcal{F}^{-1}(H_{1}*H_{2}),
\end{equation*}
\noindent where, as before,
\begin{equation*}
H_{1}(\boldsymbol{\lambda }_{1}-\boldsymbol{\lambda
}_{2})=H(\boldsymbol{\lambda }_{1},\boldsymbol{\lambda }_{2}),
\end{equation*}
\noindent with kernel $H$ being  defined in equation (\ref{eqH}), and $H_{2}(\mathbf{y})=\|\mathbf{y}%
\|^{-d+\alpha }h(\mathbf{y}).$  In the
computation of this inverse Fourier transform, we note that $H_{1}\in L^{1}(%
\mathbb{R}^{d})\cap L^{2}(\mathbb{R}^{d}).$ In order to apply the
convolution theorem, we first perform the following decomposition:
\begin{equation*}
H_{2}(\mathbf{y})=\|\mathbf{y}\|^{-d+\alpha }h(\mathbf{y})\mathbf{1}_{%
\mathcal{B}_{1}(\mathbf{0})}(\mathbf{y})+\|\mathbf{y}\|^{-d+\alpha }h(\mathbf{y})\mathbf{1%
}_{\mathbb{R}^{d}\backslash \mathcal{B}_{1}(\mathbf{0})}(\mathbf{y}):= H_{2}^{-}(\mathbf{y%
})+H_{2}^{+}(\mathbf{y}),
\end{equation*}
\noindent where $\mathcal{B}_{1}(\mathbf{0})$ denotes, as before,
the ball with center zero and radius one in $\mathbb{R}^{d}.$ Since
\begin{equation*}
\int_{\mathbb{R}^{d}}h^{2}(\mathbf{y})\|\mathbf{y}\|^{-d+\alpha }d\mathbf{y}%
<\infty,
\end{equation*}
\noindent $H_{2}^{-}\in L^{1}(\mathbb{R}^{d}),$ and $H_{2}^{+}\in L^{2}(%
\mathbb{R}^{d}).$ Applying the linearity of the convolution and
Fourier transform, the convolution theorem for both $L^{1}$  and
$L^{2}$ functions (see \cite{Triebel},  and \cite{Stade}) leads to
\begin{eqnarray}  \label{eqconv}
\mu \mathcal{F}^{-1}(h)&=& c(d,\alpha
)\mathcal{F}^{-1}(H_{1}*H_{2})= c(d,\alpha )
\left[ \mathcal{F}^{-1}(H_{1}*H_{2}^{-})+\mathcal{F}^{-1}(H_{1}*H_{2}^{+})%
\right]  \notag \\
&=&  c(d,\alpha )|D |^{-1}\mathbf{1}_{D
}(\mathcal{F}^{-1}(H_{2}^{-}+H_{2}^{+}))=  c(d,\alpha )|D
|^{-1}\mathbf{1}_{D }\mathcal{F}^{-1}H_{2},  \notag \\
\end{eqnarray}
\noindent where we have considered equations (\ref{UD1}) and
(\ref{eqH}). From (\ref{eqconv}), we can see that the support of
$\mathcal{F}^{-1}(h)$ is contained in $D,$ for any eigenfunction $h$
of $\mathcal{A}_{\alpha }.$ The convolution theorem for generalized
functions (see \cite{Triebel}) can be
applied again to $H_{2},$ since $h$ has compact support. By (\ref{kernelGG}%
), $G_{\alpha }(d\mathbf{x})=g_{\alpha }(\mathbf{x})d\mathbf{x},$ with $%
g_{\alpha }(\mathbf{x})=\|\mathbf{x}\|^{-d+\alpha }.$ Then,
\begin{equation*}
h(\mathbf{y})\|\mathbf{y}\|^{-d+\alpha }= \mathcal{F}\left(
\mathcal{F}^{-1}(h)*\mathcal{F}^{-1}(g_{\alpha
})\right)(\mathbf{y}).
\end{equation*}
\noindent Therefore, in equation (\ref{eqconv}), we obtain
\begin{eqnarray}
\mu \mathcal{F}^{-1}(h)&=& c(d,\alpha )|D |^{-1}\mathbf{1}_{D }\mathcal{F}^{-1}%
\left[ \mathcal{F}\left(
\mathcal{F}^{-1}(h)*\mathcal{F}^{-1}(g_{\alpha
})\right)\right]  \notag \\
&=& c(d,\alpha )|D|^{-1}\mathbf{1}_{D }\left(
\mathcal{F}^{-1}(h)*\mathcal{F}^{-1}(g_{\alpha })\right).
\label{eee}
\end{eqnarray}

The inverse Fourier transform $\mathcal{F}^{-1}$ of $g_{\alpha }(\mathbf{y})=\|\mathbf{y}%
\|^{-d+\alpha }$ is obtained from  equation (\ref{fist117}) in Lemma
\ref{l1s117} (see also \cite{Stein}, p.117):
\begin{equation*}
\mathcal{F}^{-1}(g_{\alpha })(\mathbf{z})= \frac{1}{c(d,\alpha)\|\mathbf{z}\|^{\alpha }}=\frac{\pi^{d/2}2^{\alpha
}\Gamma (\alpha/2)}{\Gamma \left(\frac{d-\alpha}{2}\right)}\|\mathbf{z}\|^{-\alpha }.
\end{equation*}
\noindent  Applying
(\ref{eee}) and this last relation, we finally obtain that, for an eigenpair $%
(\mu , h)$ of $\mathcal{A}_{\alpha },$
\begin{eqnarray}
\mu \mathcal{F}^{-1}(h)(\mathbf{z})&=& |D|^{-1}\mathbf{1}_{D}(\mathbf{z}%
)\int_{D} \|\mathbf{z}-\mathbf{y}\|^{-\alpha }\mathcal{F}^{-1}(h)(\mathbf{y}%
)d\mathbf{y},  \label{eqeigr}
\end{eqnarray}

\noindent since, as commented before, $\mathcal{F}^{-1}(h)$ is supported on $%
D .$ Thus, if $(\mu ,h)$ is an eigenpair of $\mathcal{A}_{\alpha },$ then $%
(\mu , \mathcal{F}^{-1}(h))$ is and eigenpair for
$|D|^{-1}\mathcal{K}_{\alpha }$ on $L^{2}(D).$ The converse
assertion also holds. Hence, there exists a one-to-one
correspondence between eigenpairs of $\mathcal{A}_{\alpha }$ and
$|D|^{-1}\mathcal{K}_{\alpha },$ which preserves the eigenvalues, as
we wanted to prove.

\section*{Appendix B}

The proof of Theorem \ref{th1} is based on  Lemma 4.1 in
\cite{Veillette}, which is now formulated.

\begin{lemma}
\label{lem1} \textit{Define the function
$G_{c}(x)=\sum_{k=1}^{\infty }x^{c_{k}},$
with $c=\{c_{n}\}$ being a positive strictly increasing sequence such that $%
c_{n}\sim \beta n^{\alpha },$ as $n\longrightarrow \infty,$ for some $%
1/2<\alpha <1,$ and constant $\beta >0.$ Then,}
\begin{eqnarray}
G_{c}(x) & \sim & x^{c_{1}},\quad \mbox{as} \ x\longrightarrow 0  \notag \\
G_{c}(x) & \sim & \frac{1}{\alpha \beta^{1/\alpha }}\Gamma \left( \frac{1}{%
\alpha }\right)(1-x)^{-1/\alpha }, \quad \mbox{as} \ x\longrightarrow 1.
\end{eqnarray}
\end{lemma}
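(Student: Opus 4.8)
The plan is to analyze the Dirichlet series $G_c(x)=\sum_{k\ge 1}x^{c_k}$ via its behavior near the two endpoints $x\to 0^+$ and $x\to 1^-$. Near $x=0$ the statement is essentially trivial: write $G_c(x)=x^{c_1}\bigl(1+\sum_{k\ge 2}x^{c_k-c_1}\bigr)$, and since $c_k-c_1\ge c_2-c_1>0$ is strictly increasing to infinity, the tail $\sum_{k\ge 2}x^{c_k-c_1}$ is dominated by a convergent geometric-type series and tends to $0$ as $x\to 0^+$. Hence $G_c(x)\sim x^{c_1}$. The substance of the lemma is the second asymptotic, as $x\to 1^-$.

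For the $x\to 1^-$ behavior, I would substitute $x=e^{-t}$ with $t\to 0^+$, so that $G_c(e^{-t})=\sum_{k\ge 1}e^{-c_k t}$, and the claim becomes $\sum_{k\ge 1}e^{-c_k t}\sim \frac{1}{\alpha\beta^{1/\alpha}}\Gamma\!\left(\frac1\alpha\right) t^{-1/\alpha}$ as $t\to 0^+$ (using $(1-x)\sim t$). This is a classical Abelian/Karamata-type statement: the counting function $N(u)=\#\{k:c_k\le u\}$ satisfies $N(u)\sim (u/\beta)^{1/\alpha}$ because $c_k\sim\beta k^\alpha$ is equivalent to $c_k^{1/\alpha}\sim \beta^{1/\alpha}k$, i.e. $N(u)\sim (u/\beta)^{1/\alpha}$. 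Then $\sum_k e^{-c_k t}=\int_0^\infty e^{-ut}\,dN(u)$, and an Abelian theorem for Laplace--Stieltjes transforms (e.g. Karamata's Tauberian theorem, or a direct integration by parts) gives $\int_0^\infty e^{-ut}\,dN(u)\sim \beta^{-1/\alpha}\,\Gamma\!\left(\tfrac1\alpha+1\right)\,t^{-1/\alpha}$, and since $\Gamma(1/\alpha+1)=\tfrac1\alpha\Gamma(1/\alpha)$ this matches the claimed constant. Concretely I would write $\sum_k e^{-c_kt}=t\int_0^\infty e^{-ut}N(u)\,du$ after integration by parts, then substitute $v=ut$ and use $N(v/t)\sim (v/(\beta t))^{1/\alpha}$ together with dominated convergence (the domination coming from a two-sided bound $c_1 k^\alpha \lesssim c_k \lesssim C k^\alpha$ valid for all $k$, which bounds $N(u)$ by a constant times $u^{1/\alpha}$ and makes $e^{-v}N(v/t)t^{1/\alpha}$ dominated by an integrable function of $v$).

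The main obstacle is making the Abelian limit rigorous: one only has $c_k\sim\beta k^\alpha$ asymptotically, not an exact identity, so the counting function $N(u)$ is only asymptotically a power of $u$, and one must control the contribution of small $k$ (equivalently small $u$) uniformly in $t$. The clean way around this is to fix $\varepsilon>0$, choose $K$ so that $(1-\varepsilon)\beta k^\alpha\le c_k\le(1+\varepsilon)\beta k^\alpha$ for $k\ge K$, split $\sum_k e^{-c_kt}=\sum_{k<K}e^{-c_kt}+\sum_{k\ge K}e^{-c_kt}$; the first sum is bounded (it tends to $K-1$ as $t\to 0$) hence negligible against $t^{-1/\alpha}\to\infty$, and the second is squeezed between $\sum_{k\ge K}e^{-(1\pm\varepsilon)\beta k^\alpha t}$, each of which is handled by the exact computation $\sum_{k\ge 1}e^{-\gamma k^\alpha t}\sim \gamma^{-1/\alpha}\Gamma(1/\alpha+1)\,t^{-1/\alpha}$ (itself an Euler--Maclaurin / integral-comparison estimate $\sum_{k\ge 1}e^{-\gamma k^\alpha t}\sim\int_0^\infty e^{-\gamma s^\alpha t}\,ds$). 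Letting $\varepsilon\downarrow 0$ at the end pins down the constant. Throughout, the uniform two-sided polynomial bound on $c_k$ that the hypotheses of Theorem~\ref{WaeK} and Corollary~\ref{lfes} supply for our eigenvalue sequences is exactly what legitimizes the interchange of limit and summation.
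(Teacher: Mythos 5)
Your argument is correct, but note that the paper does not actually prove Lemma~\ref{lem1}: Appendix~B only restates it as Lemma~4.1 of Veillette and Taqqu (2013) and refers the reader there, so there is no in-paper proof to compare against. Your route --- the trivial factorization $G_c(x)=x^{c_1}(1+\sum_{k\ge 2}x^{c_k-c_1})$ at the origin, and at $x\to 1^-$ the substitution $x=e^{-t}$ reducing the claim to $\sum_k e^{-c_kt}\sim \alpha^{-1}\beta^{-1/\alpha}\Gamma(1/\alpha)\,t^{-1/\alpha}$, established either via the Laplace--Stieltjes transform of the counting function $N(u)\sim(u/\beta)^{1/\alpha}$ or via the $\varepsilon$-squeeze against the exactly computable sums $\sum_k e^{-(1\pm\varepsilon)\beta k^\alpha t}$ --- is the standard Abelian argument and is essentially what the cited reference does. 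The constant checks out, since $\int_0^\infty e^{-\gamma s^\alpha t}\,ds=\gamma^{-1/\alpha}\Gamma(1+1/\alpha)t^{-1/\alpha}$ and $\Gamma(1+1/\alpha)=\alpha^{-1}\Gamma(1/\alpha)$, and $(1-e^{-t})^{-1/\alpha}\sim t^{-1/\alpha}$ justifies passing back to $1-x$. One small correction of framing: the two-sided bound $c'k^\alpha\le c_k\le Ck^\alpha$ that you invoke for domination need not be imported from Theorem~\ref{WaeK} or Corollary~\ref{lfes}; it already follows from the lemma's own hypotheses (positivity, strict monotonicity, and $c_k\sim\beta k^\alpha$), so the lemma stands as a self-contained statement about sequences, which is exactly how the paper uses it.
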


\section*{Appendix C}

An alternative proof of the boundedness of the probability density of $%
S_{\infty },$ based on the series representation given in Corollary \ref%
{corr} is derived, and an upper bound is also provided.

\subsection*{Proof of boundedness of the probability density of $S_{\infty }$%
}
 From Corollary \ref{lfes}, there exist  two indexes $\mathbf{k}_{0}$ and
$\mathbf{k}_{1}$ such that $\lambda_{\mathbf{k}_{0}}(S_{\infty})>\lambda_{\mathbf{k}_{1}}(S_{\infty}).$
 Then,

\begin{equation*}
S_{\infty }=\sum_{\mathbf{k}\in \mathbb{N}^{d}_{*}}
\lambda_{\mathbf{k}}(S_{\infty})\left(
\varepsilon_{\mathbf{k}}^{2}-1\right)
=\lambda_{\mathbf{k}_{0}}(S_{\infty})(\varepsilon_{\mathbf{k}_{0}}^{2}-1)+\lambda_{\mathbf{k}_{1}}(S_{\infty})
(\varepsilon_{\mathbf{k}_{1}}^{2}-1)+\eta.
\end{equation*}
\noindent where
\begin{equation*}
\eta =\sum_{\mathbf{k}\in \mathbb{N}^{d}_{*},\mathbf{k}\neq
\mathbf{k}_{0}, \mathbf{k}_{1}}\lambda
_{\mathbf{k}}(S_{\infty})\left(
\varepsilon_{\mathbf{k}}^{2}-1\right) .
\end{equation*}%

Thus,
\begin{equation*}
S_{\infty }=\lambda _{\mathbf{k}_{1}}(S_{\infty})(\beta
\varepsilon_{\mathbf{k}_{0}}^{2}+\varepsilon_{\mathbf{k}_{1}}^{2})-(\lambda
_{\mathbf{k}_{0}}(S_{\infty})+\lambda
_{\mathbf{k}_{1}}(S_{\infty}))+\eta ,
\end{equation*}%
\noindent where $\beta =\lambda
_{\mathbf{k}_{0}}(S_{\infty})/\lambda_{\mathbf{k}_{1}}(S_{\infty}).$

The random variables $\varepsilon_{\mathbf{k}_{0}}^{2}$ and
$\varepsilon_{\mathbf{k}_{1}}^{2}$ are independent. Since the density of $\varepsilon _{\mathbf{k}_{1}\text{ }}^{2}$ is of the form%
\[
f_{\varepsilon _{\mathbf{k}_{1}\text{ }}^{2}}(x)=\frac{1}{\Gamma (\frac{1}{2})\sqrt{2%
}}x^{-1/2}e^{-x/2},\ x>0,
\]

\noindent and the density of $\beta \varepsilon _{\mathbf{k}_{0}\text{ }}^{2}$ is given by
\[
f_{\beta \varepsilon _{\mathbf{k}_{0}}^{2}}(x)=\frac{1}{\beta \Gamma (\frac{1}{2})%
\sqrt{2}}(x/\beta )^{-1/2}e^{-x/2\beta },\ x>0,
\]
\noindent  noting that $\beta =\frac{\lambda _{\mathbf{k}_{0}}(S_{\infty})}{\lambda _{\mathbf{k}_{1}}(S_{\infty})}>1,$
then
 the density of $\varsigma =\beta
\varepsilon_{\mathbf{k}_{0}}^{2}+\varepsilon_{\mathbf{k}_{1}}^{2}$ satisfies
\begin{eqnarray}
f_{\varsigma }(u)&=&\int_{0}^{u}f_{\varepsilon _{\mathbf{k}_{1}\text{ }%
}^{2}}(u-x)f_{\beta \varepsilon _{\mathbf{k}_{0}}^{2}}(x)dx \nonumber\\ &=&
\frac{e^{-u/2}}{2\Gamma ^{2}(\frac{1}{2})\sqrt{\beta }}%
\int_{0}^{u}(u-x)^{-1/2}e^{\frac{x}{2}}x^{-1/2}e^{-\frac{x}{2\beta }}dx =
\nonumber\\ & & \lbrack 1-\frac{1}{\beta }>0] \nonumber\\
&=&\frac{e^{-u/2}}{2\Gamma ^{2}(\frac{1}{2})\sqrt{\beta }}%
\int_{0}^{u}(u-x)^{-1/2}e^{\frac{x}{2}\left(1-\frac{1}{\beta }\right)}x^{-1/2}dx \nonumber\\ &\leq &
 \frac{e^{-u/2}e^{\frac{u}{2}\left(1-\frac{1}{\beta }\right)}}{2\Gamma ^{2}(\frac{1}{2})%
\sqrt{\beta }}\int_{0}^{u}(u-x)^{-1/2}x^{-1/2}dx \nonumber\\ &\leq &
e^{-\frac{u}{2\beta }}\frac{B(\frac{1}{2},\frac{1}{2})}{2\Gamma ^{2}(%
\frac{1}{2})\sqrt{\beta }}\leq \frac{1}{2\sqrt{\beta }}=\frac{1}{2\sqrt{%
\frac{\lambda _{\mathbf{k}_{0}}(S_{\infty})}{\lambda _{\mathbf{k}_{1}}(S_{\infty})}}}\leq \frac{1}{2}.
\end{eqnarray}
As the convolution of a bounded density with other is bounded, we then obtain the desired result.

\end{document}